\newtheorem{theorem}{Theorem}[section]
\newtheorem{proposition}[theorem]{Proposition}
\newtheorem{lemma}[theorem]{Lemma}
\newtheorem{corollary}[theorem]{Corollary}
\theoremstyle{definition}
\newtheorem{definition}[theorem]{Definition}
\newtheorem{remark}[theorem]{Remark}
\def\bX{\mathbb{X}}
\def\R{\mathbb{R}}
\def\N{\mathbb{N}}
\def\C{\mathbb{C}}
\def\ZZ{\mathbb{Z}}
\def\E{\mathbb{E}}
\def\lla{\lambda}
\def \supp{\operatorname{supp}}
\def\cA{{\mathcal A}}
\def\cF{{\mathcal F}}
\def\cL{{\mathcal L}}
\def\cM{{\mathcal M}}
\def\cR{{\mathcal R}}
\newcommand{\X}{{X}}
\newcommand{\Y}{{Y}}
\def\trXY1-tq{Tr(X,Y, 1-\theta ,q)}
\newcommand{\ud}{\mathrm{d}}
\title[Weighted inequalities for singular integral operators]{Weighted inequalities for singular integral operators on the half-line}
\author{Ralph Chill}
\address{R.~Chill, Institut f\"ur Analysis, Fachrichtung Mathematik, TU Dresden, 01062 Dresden, Germany}
\email{ralph.chill@tu-dresden.de}
\author{Sebastian Kr\'ol}
\address{S. Kr\'ol, Institut f\"ur Analysis, Fachrichtung Mathematik, TU Dresden, 01062 Dresden, Germany, and Faculty of Mathematics and Computer Science, Nicolaus Copernicus University, ul. Chopina 12/18, 87-100 Toru\'n, Poland}
\email{sebastian.krol@mat.umk.pl}
\thanks{The second author is grateful for support by the Alexander von Humboldt Foundation}
\numberwithin{equation}{section}
\begin{document}

\date{\today}

\keywords{singular integral operators, weighted estimates, extrapolation, rearrangement invariant Banach function space, first order Cauchy problem, maximal regularity}

\subjclass{46D05} 

\begin{abstract}
We prove weighted estimates for singular integral operators which operate on function spaces on a half-line. The class of admissible weights includes Muckenhoupt weights and weights satisfying Sawyer's one-sided conditions. The kernels of the operators satisfy relaxed Dini conditions. We apply the weighted estimates to extrapolation of maximal $L^p$ regularity of first order, second order and fractional order Cauchy problems into weighted rearrangement invariant Banach function spaces. In particular, we provide extensions, as well as a unification of recent results due to Auscher and Axelsson, and Chill and Fiorenza.
\end{abstract}

\renewcommand{\subjclassname}{\textup{2010} Mathematics Subject Classification}

\maketitle

\section{Motivation and description of the main result} 

Weighted $L^p$-estimates for singular integral operators play an important role in harmonic analysis and in its applications to elliptic and parabolic partial differential equations, as well as abstract evolution equations on Banach spaces, where singular integral operators with operator valued kernels naturally arise via representation formulas for solutions. These applications are in turn important for the theory of nonlinear equations where one is often interested in obtaining wellposedness and regularity results for data in various function spaces. \\

A prominent part of the literature is devoted to weighted inequalities for the Hardy-Littlewood maximal function and singular integral operators on $\R^n$. Motivated in particular by applications to $L^p$-maximal regularity, or actually $L^p_w$-maximal regularity and $\E_w$-maximal regularity ($\E$ being a rearrangement invariant Banach function space), of first order, second order and fractional order Cauchy problems on the half-line, we study in this article weighted inequalities for singular integral operators on the half-line. This special situation brings in some new features which we combine with techniques which have been developed recently in the context of singular integral operators on function spaces on the line or on $\R^n$. By concentrating on the half-line case, the natural context in the above mentioned applications, we obtain weighted inequalities for a larger class of weights, which includes the class of Muckenhoupt weights and the class of weights satisfying the one-sided Sawyer condition on the line. Second, we obtain weighted inequalities for singular integral operators with kernels which satisfy comparatively weak regularity conditions. Third, by relying on Rubio de Francia's extrapolation technique, we obtain not only weighted $L^p$-estimates, but an extrapolation result in the context of rearrangement invariant Banach function spaces. \\

Let us explain these results in more detail and prepare the notational background, by recalling first the situation of singular integral operators on the line. Throughout, let $\X$ and $\Y$ be two Banach spaces with norms $|\cdot |_\X$ and $|\cdot |_Y$, respectively. A measurable function $K:\R\times \R \rightarrow \cL(\X ,\Y )$ is called a {\em kernel} if $K(t, \cdot) \in L^1_{loc}(\R \setminus \{t\}; \cL(\X ,\Y ))$ for every $t \in \R$. 
We say that a bounded linear operator $T$ from $L^p(\R;\X)$ into $L^p(\R;\Y)$ ($p\in (1 ,\infty)$) is a {\em singular integral operator} if there exists a kernel $K$ such that
\begin{align*} 
& Tf(t) = \int_{\R} K(t,s)f(s) \ud s \\
& \text{for every } f\in L_c^\infty (\R; \X) \text{ and every } t\notin \supp f .
\end{align*} 
Here, $L_c^\infty(\R; X)$ stands for the space of all $X$-valued, essentially bounded, measurable functions with compact support in $\R$. A singular integral operator with kernel $K$ is called a {\em singular integral operator of convolution type} if its kernel is {\em translation-invariant}, that is, it is of the form $K(t,s) = \tilde{K} (t-s)$. We call a singular integral operator a {\em Calder\'on-Zygmund operator} if both the kernel $K$ and the {\em adjoint kernel} $K'$ given by $K'(t,s):=K(s,t)$ ($t$, $s\in \R$) satisfy the classical Lipschitz or {\em second standard condition}, which is the condition that there exists a constant $\delta >0$ such that 
\begin{equation*}
[K]_{D^*_\infty}:=\sup_{t,s,s'\in \R, \atop 2|s-s'|<|t-s|}\frac{|t-s|^{1+\delta}}{|s-s'|^\delta} | K(t,s) - K(t,s')|_{\cL(\X ,\Y )} <\infty , \tag{$D^*_\infty$}
\end{equation*} 
or, in the case of a translation-invariant kernel,
\begin{equation*}
[K]_{H^*_\infty}:=\sup_{t, s\in \R, \atop 2|s|<|t|}\frac{|t|^{1+\delta}}{|s|^\delta} | K(t-s) - K(t)|_{\cL(\X ,\Y )} <\infty . \tag{$H^*_\infty$}
\end{equation*} 

Coifman's inequality, which appears in a general form in \cite[Theorem II]{CoFe74}, says that for every Calder\'on-Zygmund operator $T$, every $p\in(0,\infty)$ and every Muckenhoupt weight $w\in A_\infty(\R)$ there exists a constant $C$, depending on $p$ and the $A_\infty$-constant of $w$, such that 
\begin{equation}\label{Coifman}
 \int_{\R} |Tf|_\Y^p w\; \ud t \leq C \int_{\R} (Mf)^p w\;\ud t \text{ for every } f\in L_c^\infty(\R; X) ,
\end{equation}
$M$ being the classical Hardy-Littlewood maximal operator on $\R$. In other words, all weighted $L^p$-estimates for $M$, for $p\in (0,\infty)$ and $w\in A_\infty(\R)$, are inherited by the operator $T$. Namely, by Muckenhoupt's theorem and Coifman's inequality, a Calder\'on-Zygmund operator extrapolates to a bounded linear operator on $L^p_w(\R ; X)$ for every $p\in(1,\infty)$ and every Muckenhoupt weight $w\in A_p(\R)$. This result is best possible in the sense that there are Calder\'on-Zygmund operators which are not bounded on $L^p_w(\R ; X)$ ($p\in (1,\infty )$) if the weight $w$ does not belong to $A_p(\R)$. At the same time, if the operator $T$ is bounded on $L_w^p(\R; X)$ for some $w\in A_\infty(\R)$  and some $p\in (1,\infty)$, then, by Lebesgue's differentiation theorem, Coifman's inequality (\ref{Coifman}) holds for all $f\in L_w^p(\R;X)$. \\

Subsequently, variants of Coifman's inequality for singular integral operators of convolution type and of nonconvolution type with less regular kernels attracted the attention during the last decades; see, in particular, the pioneering results from Kurtz \& Wheeden \cite{KuWh79}, Rubio de Francia, Ruiz \& Torrea \cite{RuRuTo86}, Alvarez \& P\'erez \cite{AlPe94}. Following the terminology from \cite[Definition 1.1, Part III]{RuRuTo86}, we say that a kernel $K$ satisfies the condition $(D_r )$ ($r\in [1,\infty )$) if 
\[
[K]_{D_r}:=\sup_{s\neq s'}|s-s'|^{\frac{1}{r'}}
\sum_{m=1}^\infty 2^{\frac{m}{r'}} \left(\int_{S_m(s,s')} 
| K(t,s) - K(t,s') |_{\cL(\X ,\Y )}^{r} \ud t\right)^{\frac{1}{r}}<\infty, \tag{$D_r$}
\]
where $S_m(s,s'):=\{ t\in \R: 2^m|s-s'|< |t-s'| \leq 2^{m+1} |s-s'|\}$ ($m\in \N$), and it satisfies the condition $(D_\infty)$ if 
\[
[K]_{D_\infty}:=\sup_{s\neq s'}|s-s'|
\sum_{m=1}^{\infty} 
2^m \sup_{t\in S_m(s,s')} |K(t,s) - K(t,s')|_{\cL(\X ,\Y )} <\infty. \tag{$D_\infty$}
\] 
Moreover, we say that a kernel $K$ satisfies the condition $(D'_r)$ for some $r\in [1,\infty ]$ if its adjoint kernel $K'$ satisfies the condition $(D_r)$, and then we set $[K]_{D'_r}:=[K']_{D_r}$. If a kernel $K$ satisfies the condition $(D_r)$ for some $r\in [1, \infty]$, then it satisfies also the condition $(D_q)$ for every $q\in [1,r]$. Finally, a translation-invariant kernel satisfies the condition $(D_r')$ if and only if it satisfies $(D_r)$, which in the situation of a translation-invariant kernel we denote also by $(H_r)$; see Lorente, Riveros \& de la Torre \cite{LoRiTo05}, but also Kurtz \& Wheeden \cite{KuWh79}. It means that 
\begin{equation}
[K]_{H_r}:=\sup_{s\not= 0}|s|^{\frac{1}{r'}} \sum_{m=1}^\infty 2^{\frac{m}{r'}} \left(\int_{2^m |s|<|t|\leq 2^{m+1}|s|} 
| K(t-s) - K(t) |_{\cL(\X ,\Y )}^{r} \;\ud t\right)^{\frac{1}{r}}<\infty  \tag{$H_r$}
\end{equation}
if $r\in [1,\infty )$, and
\begin{equation}
[K]_{H_\infty}:=  \sup_{s\not= 0}|s|\sum_{m=1}^\infty 2^{m} \, \sup_{2^m |s|<|t|\leq 2^{m+1}|s|} 
| K(t-s) - K(t) |_{\cL(\X ,\Y )}  <\infty. \tag{$H_\infty$}
\end{equation}
Note that the condition $(H_\infty)$ for a function $K \in L^{1}_{loc}(\mathbb R \setminus \{0\}; \cL ({\X}) )$ is weaker than the second standard condition $(H^*_\infty)$.

If a kernel $K$ satisfies the $(H_r)$ condition, then the associated singular integral operator $T$ satisfies Coifman's inequality (\ref{Coifman}) with $M$ replaced by $M_{r'}$ ($r\in (1,\infty ]$, $r':=r/(r-1)$), where $M_rf:=(M(|f|^r))^{1/r}$ for every $f\in L^1_{loc}(\R)$; see \cite[Theorem 1.3, Part II]{RuRuTo86} for the case $r<\infty$ and \cite[Theorem 2.1]{AlPe94} for the case $r=\infty$, and \cite[Part III]{RuRuTo86} for corresponding results in the nonconvolution case. By Martell, P\'erez \& Trujillo-Gonz\'alez \cite[Theorem 3.2]{MaPeTG05}, these variants of Coifman's inequality are sharp in the sense that the maximal operator $M_{r'}$ cannot be replaced by $M_s$ for $s<r'$.

Motivated by the one-sided discrete square function studied in de la Torra \& Torrea \cite{ToTo03} further extensions of the above mentioned results were provided in \cite{LoRiTo05}; see also Lorente, Martell, Riveros \& de la Torre \cite{LMRT08} and the references therein. We point out that the kernel corresponding to this square function satisfies the condition $(H_r)$ for all $r<\infty$, but not $(H_\infty)$. 

In Lorente, Riveros \& de la Torre \cite[Definition 3]{LoRiTo05}, a new scale of H\"ormander conditions $(H_\cA)$, parametrized by Young functions $\cA$, is introduced. For appropriate Young functions $\cA$, the conditions $(H_\cA)$ are intermediate between $(H_\infty)$ and the intersection of all $(H_r)$, $r\in[1,\infty)$.  The corresponding result asserts that if $T$ is a singular operator with translation-invariant kernel satisfying $(H_\cA)$, then for any $p\in (0,\infty)$ and $w\in A_\infty(\R)$, Coifman's inequality $(\ref{Coifman})$ holds with $M$ replaced by $M_{\bar\cA}$, which is associated with the Young function $\cA$ and defined in a similar way as $M_r$; see \cite[Theorem A]{LoRiTo05}. However, note that $Mf\leq C \, M_{\bar\cA}f$ for every Young function $\cA$, and $f\in L_{loc}^1(\R)$. Therefore, such a $M_{\bar\cA}$-variant of Coifman's inequality allows one to deduce the boundedness of $T$ on $L^p_w(\R ; X)$ at most for $p\in(1,\infty)$ and $w\in A_p(\R)$.    

The validity of Coifman's inequality (\ref{Coifman}) for a singular integral operator $T$ with kernel in $\bigcap_{r>1}(H_r)\setminus (H_\infty)$ was left open in \cite{LoRiTo05}; see \cite[Remark 1]{LoRiTo05}. In \cite[Theorem 7]{ChKr14}, the present authors showed that such operators are indeed bounded on $L_w^p(\R;X)$ for every $p\in (1,\infty )$ and $w\in A_p(\R)$. The latter result is even true in the more general case of nonconvolution kernels, under asymmetric regularity assumptions with respect to the variables, and in the context of general rearrangement invariant Banach function spaces. \\

In addition, in \cite[Section 3]{LoRiTo05} the question was raised whether it is possible to improve the above results in the case of one-sided singular integral operators, that is, with translation-invariant kernels supported in the half-line $\R_- := (-\infty ,0]$ or $\R_+ := [0,\infty )$. In particular, \cite[Theorem 3]{LoRiTo05} says that if a kernel $K$ satisfies the $(H_{\cA})$ condition and if $\supp K\subseteq \R_-$, then for every $p\in(0, \infty)$ and $w\in A^+_\infty(\R)$ there exists a constant $C$ such that for a singular integral operator associated with $K$
\begin{equation}\label{onesidedCF}
 \int_\R |Tf|_\Y^p w \, \ud t\leq C \, \int_\R \left(M^+_{\bar\cA}f\right)^p w \, \ud t\quad \text{for every } f\in L_c^\infty(\R;X).
\end{equation}
Recall that $A^\pm_\infty(\R)$ is the union of the classes $A_p^\pm(\R)$, $p\in[1,\infty)$, introduced by Sawyer \cite{Sw86} to characterise the weighted $L^p$-estimates for the one-sided Hardy-Littlewood operators $M^\pm$, which are are defined as follows:
\begin{equation}\label{M op}
 M^-f(t)  := \sup_{h>0} \frac{1}{h} \int_{t-h}^{t} |f| \,\ud s, \quad \text{ and }\quad 
M^+f(t)  := \sup_{h>0} \frac{1}{h} \int_{t}^{t+h} |f| \, \ud s\phantom{,}
\end{equation}
for every $f\in L_{loc}^1(\R)$ and $t\in\R$. Recall also the corresponding \emph{one-sided $A_p$ conditions} introduced by Sawyer. For example, we say that a weight $w$ satisfies the $A_p^+$ condition on $\R$, and write $w\in A^+_p(\R)$, if
\[
\sup_{a\in\R, \, h>0} \frac{1}{h} \int^a_{a-h} w\, \ud t\, \left(\frac{1}{h} \int^{a+h}_a w^{1-p'}\ud t\right)^{p-1} < \infty.
\]
The class $A_p^-(\R)$ is defined analoguously. Sawyer proved the following analogue of Muckenhoupt's theorem, namely that for every $p\in (1,\infty )$ and for any weight $w$ on $\R$, $M^\pm$ is bounded on $L^p_w (\R )$ if and only if $w\in A_p^\pm(\R)$; see \cite[Theorem 1]{Sw86}. Note that the class $A^+_p (\R)$ is bigger than the class $A_p (\R)$. Indeed, following Sawyer's remark, the product $w=\phi v$ of a nondecreasing function $\phi$ and a Muckenhoupt weight $v\in A_p(\R)$ belongs to $A_p^+(\R)$. In particular, $A_p^+(\R)$, contains all positive nondecreasing functions on $\R$. Furthermore, $A^\pm_\infty(\R)\subseteq L^1_{loc}(\R)$. It is easily seen that for every Young function $\cA$, $M^+f \leq M_{\bar \cA}^+f$ $(f\in L_{loc}^1(\R))$. Therefore, by Sawyer's result, the one-sided Coifman inequality (\ref{onesidedCF}) controls the boundedness of $T$ on $L^p_w(\R ; X)$  at most for $p\in(1,\infty)$ and $w\in A_p^+(\R)$.\\

In this article we continue the above presented line of researches on weighted estimates for the one-sided Hardy-Littlewood maximal operators on the half-line, and for singular integral operators on $L^p (\R_\pm ;X)$ with kernels which satisfy the $(H_r)$ condition for all $r\in[1,\infty)$ and which are supported on a half-line. The main result of the article is Theorem \ref{Boyd th}.
 
The main ingredients of the proof of Theorem \ref{Boyd th} may be of independent interest. In particular, in Section \ref{sec.Muckenhoupt} we show that the operator $M^+$ is bounded on $L^p_w(\R_- )$ if and only if the weight $w$ satisfies a Sawyer type condition $A^+_p$ restricted to $\R_-$; see Definition \ref{def.weight} and Theorem \ref{thm.restSawyer}. For this counterpart of Sawyer's theorem, we adapt the approach presented in Martin-Reyes \cite{MR93}. In particular, we show that the classes $A_p^+(\R_-)$, $p\in(1,\infty)$, possess the openness property; see Lemma \ref{lem.openness}. 

The other main ingredient is an analogue of the Fefferman-Stein inequality (Lemma \ref{FSineq}), which allows us to provide integral estimates for singular operators; see Theorem \ref{thm.exsio}. In Section \ref{sec.r.i.extrap}, we provide a counterpart of the Lorenz-Shimogaki theorem for the operator $M^+$; see Proposition \ref{LSanalogue}. Then, by means of Rubio de Francia's iteration algorithm we prove the main result, Theorem \ref{Boyd th}. It extends also the result by Curbera, Garcia-Cuerva, Martell \& P\'erez on extrapolation of Calder\'on-Zygmund operators for weighted rearrangement invariant Banach function spaces; see, for example, \cite[Section 3.6]{CrMaPe11} and corresponding references therein.

In the proofs of the main ingredients of Theorem \ref{Boyd th} we reproduce standard techniques from the Calder\'on-Zygmund theory, especially developed in Benedek, Calder\'on \& Panzone \cite{BeCaPa62}, Calder\'on \cite{Ca66}, Martin-Reyes \cite{MR93}, Martin-Reyes, Pick \& de la Torre \cite{MRPiTo93}, Martin-Reyes \& de la Torre \cite{MRTo94}, Muckenhoupt \cite{Mu72}, Rubio de Francia \cite{Ru84}, Rubio de Francia, Ruiz \& Torrea \cite{RuRuTo86}, and Sawyer \cite{Sw86}.

In Section \ref{sec.apply}, we apply our results to study the extrapolation of $L^p$-maximal regularity for the first order, second order and fractional order Cauchy problems as well as for Volterra equations. Theorems \ref{thm.first.order} and \ref{thm.mrextra.second} provide generalisations of recent results from Auscher \& Axelsson \cite{AuAx11a}, Chill \& Fiorenza \cite{ChFi14} and the present authors \cite{ChKr14}; see also Pr\"uss \& Simonett \cite{PrSi04} and Haak \& Kunstmann \cite{HaKu07} for earlier results of this type. To keep the presentation of the main result, Theorem \ref{Boyd th}, more transparent, more detailed information on these applications is included in Section \ref{sec.apply}.

\section{The one-sided Muckenhoupt $A_p$-condition on a half-line}\label{sec.Muckenhoupt}

In this section we prove a counterpart of Sawyer's theorem for the one-sided Hardy-Littlewood operators restricted to functions supported on a half-line; see Theorem \ref{thm.restSawyer} below. Set 
\begin{align*}
M^\pm_- f & := \chi_{\R_-}\cdot M^\pm f \quad  \text{ for every } f\in L^1_{loc}(\R_-),
\textrm{ and }\\ 
M^\pm_+ f &:=\chi_{\R_+}\cdot M^\pm f \quad  \text{ for every } f\in L^1_{loc}(\R_+),
\end{align*}  
where $M^+$ and $M^-$ denote the one-sided Hardy-Littlewood operators (see (\ref{M op}) for the definition), and $L^1_{loc} (\R_+)$ and $L^1_{loc} (\R_-)$ are understood as subspaces of $L^1_{loc} (\R)$ in the natural way. We point out that the maximal function as originally defined by Hardy and Littlewood corresponds to the operator $M^-_+$. 
With each of these operators we associate a class of weights, the definition of which is motivated by the argument used in the proof of the first part of Lemma \ref{lem.weak-estim} below. Throughout, a measurable function $w:\R_\pm\rightarrow \R_+$ with $w(t)>0$ for almost every $t\in \R_\pm$ is called a \emph{weight} on $\R_\pm$.

\begin{definition} \label{def.weight} 
Let $w$ be a weight on $\R_-$.
\begin{itemize}
\item [(a)] For $p\in (1,\infty)$ we say that $w$ satisfies {\em Sawyer's $A_p^+$ condition} (resp. {\em Sawyer's $A_p^-$ condition}) on $\R_-$, and we write $w\in A^+_p(\R_-)$ (resp. $w\in A^-_p (\R_-)$), if
\begin{align}
\label{A+-}[w]_{A_p^+(\R_-)} & :=\sup_{a<b<c\leq 0}\frac{1}{(c-a)^p}
 \int_a^b w\,\ud t \, \left( \int_b^c w^{1-p'}\,\ud t \right)^{p-1}<\infty \\
\nonumber (\text{resp. } [w]_{A_p^-(\R_-)} & :=\sup_{a<b<c\leq 0}\frac{1}{(c-a)^p}
 \int_b^c w\, \ud t \, \left( \int_a^b w^{1-p'}\, \ud t \right)^{p-1}<\infty ).
\end{align}
 \item [(b)] We say that $w$ belongs to $A_1^+(\R_-)$ (resp. $A_1^- (\R_-)$), if 
\begin{align*} 
[w]_{A_1^+(\R_-)} & :=\|M^- w/ w\|_{L^\infty(\R_-)}<\infty \\
(\text{resp. } [w]_{A_1^-(\R_-)} & :=\|M^+ w/ w\|_{L^\infty(\R_-)}<\infty ).
\end{align*}
 \item [(c)] We say that $w$ belongs to $A_\infty^+(\R_-)$ (resp. $A_\infty^-(\R_-)$), if there exist constants $C$, $\delta>0$ such that for every $a<b<c\leq 0$ and every measurable set $S\subseteq [b,c]$ (resp. $S\subseteq [a,b]$) 
\begin{align} 
\label{Ainfty-} \frac{|S|}{c-a} & \leq C \left(\frac{w(S)}{w(a,b)} \right)^{\delta} \\ 
\nonumber (\text{resp. } \frac{|S|}{c-a} & \leq C \left(\frac{w(S)}{w(b,c)} \right)^{\delta} ), 
\end{align}
Here and subsequently, for every measurable set $A\subseteq \R_-$, we write $w(A)$ for $\int_A w\,\ud t$.
\item [(d)] Moreover, for $p\in[1,\infty)$ we say that a weight $w$ on $\R_+$ belongs to  $A_p^\pm(\R_+)$ if and only if $w(-\cdot)\in A_p^\mp(\R_-)$, and then we set $[w]_{A_p^\pm(\R_+)} := [w(-\cdot)]_{A_p^\mp(\R_-)}$. For $p=\infty$, 
we say that $w$ belongs to $A_\infty^+(\R_+)$ (resp. $A_\infty^-(\R_+)$) , if there exist constants $C$, $\delta>0$ such that for every $0\leq a<b<c$ and every measurable set $S\subseteq [b,c]$ (resp. $S\subseteq [a,b]$) 
\begin{align} 
\label{Ainfty+}  \frac{|S|}{c-a} & \leq C \left(\frac{w(S)}{w(a,b)} \right)^{\delta} \\ 
\nonumber (\text{resp. }\frac{|S|}{c-a} & \leq C \left(\frac{w(S)}{w(b,c)} \right)^{\delta} ) .
\end{align}
\end{itemize} 
\end{definition}

The classes $A_p^\pm(\R_+)$ ($p\in[1,\infty)$) may of course be defined directly, too, by using expressions which are symmetric to those used in the definition of the classes $A^\pm_p(\R_-)$. For instance, a weight $w$ on $\R_+$ belongs to $A^-_p(\R_+)$ ($1<p<\infty$) if and only if 
\begin{equation*}
 [w]_{A_p^+(\R_+)}  :=\sup_{0 \leq a<b<c}\frac{1}{(c-a)^p}
 \int_b^c w\,\ud t \, \left( \int_a^b w^{1-p'}\,\ud t \right)^{p-1}<\infty.
\end{equation*}
Since according to our definition any weight on a half-line is almost everywhere finite and (strictly) positive, any weight $w$ in $A^\pm_p(\R_-)$ (resp. $A_p^\pm(\R_+)$) is locally integrable on $(-\infty, 0)$ (resp. $(0,\infty)$).

\begin{remark} \label{rem.weights}
If $w: (-\infty ,0) \to \R_+$ is an increasing function and if $v\in A_p^+ (\R_-)$ ($p\in [1,\infty )$), then the product $wv$ belongs to $A_p^+ (\R_-)$, too. This follows easily from the definition. In particular, every increasing function $w: (-\infty ,0)\to\R_+$ belongs to $A_p^+ (\R_- )$ for every $p\in [1,\infty )$. Similarly, every decreasing function $w: (0,\infty )\to\R_+$ belongs to $A_p^- (\R_+)$ for every $p\in [1,\infty )$. In particular, the decreasing power weights given by $w(t) = t^\beta$ belong to $A_p^- (\R_+ )$ for every $p\in [1,\infty )$ and every $\beta\in (-\infty ,0]$. This is in contrast to the Muckenhoupt power weights where necessarily $\beta >-1$. 
\end{remark}

The following theorem provides a counterpart for the operators $M^\pm_-$ and $M^\pm_+$ of Sawyer's result \cite[Theorem 1]{Sw86}.

\begin{theorem} \label{thm.restSawyer} 
Let $1<p<\infty$ and $w$ be a weight on $\R_-$ (resp. $\R_+$).
Then the Hardy-Littlewood operators $M^\pm_-$ (resp. $M^\pm_+$) are bounded on $L^p_w(\R_-)$ (resp.  $L^p_w(\R_+)$) if and only if $w\in A^\pm_p(\R_-)$ (resp. $w\in A^\pm_p(\R_+)$). 
\end{theorem}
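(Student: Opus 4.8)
The plan is to reduce the half-line statement to Sawyer's classical theorem on the whole line by a reflection and truncation argument, following the strategy indicated in the introduction (adapting Martin-Reyes \cite{MR93}), and to treat the four operators $M^\pm_\mp$ in parallel since they are related by the reflection $t \mapsto -t$; it therefore suffices to handle, say, $M^+_-$ on $L^p_w(\R_-)$, the other three cases being symmetric via part (d) of Definition \ref{def.weight}.

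\emph{Necessity.} Suppose $M^+_-$ is bounded on $L^p_w(\R_-)$. Fix $a<b<c\leq 0$. Testing the inequality $\|M^+_- f\|_{L^p_w(\R_-)} \leq C \|f\|_{L^p_w(\R_-)}$ on $f = \chi_{[b,c]} w^{1-p'}$ (which lies in $L^p_w(\R_-)$ by local integrability of $w^{1-p'}$, itself a consequence of $w>0$ a.e.\ together with the boundedness assumption applied to bounded test functions), observe that for $t\in[a,b]$ one has $[t,c]\supseteq[b,c]$, hence
\[
M^+_- f(t) \;\geq\; \frac{1}{c-t}\int_b^c w^{1-p'}\,\ud s \;\geq\; \frac{1}{c-a}\int_b^c w^{1-p'}\,\ud s .
\]
Raising to the $p$-th power, multiplying by $w$, integrating over $[a,b]$, and comparing with $\int f^p w = \int_b^c w^{1-p'}$ yields exactly the $A_p^+(\R_-)$ bound after rearranging; a standard approximation argument removes the case $\int_b^c w^{1-p'} = \infty$.

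\emph{Sufficiency.} This is the harder direction. Given $w\in A_p^+(\R_-)$, extend $w$ to a weight $\tilde w$ on all of $\R$ — the natural choice is to set $\tilde w \equiv w$ on $\R_-$ and $\tilde w(t) := w(0^-)$ (or any constant, suitably interpreted via a limiting value, or simply a large constant on $[0,1]$ tapering off) on $\R_+$ — and then verify that $\tilde w\in A_p^+(\R)$ in Sawyer's sense; the point of the restricted supremum in \eqref{A+-} (only triples $a<b<c\leq 0$) is precisely that it should match the full-line condition once the extension is in place, because any triple $a<b<c$ on the line with $c>0$ can be controlled by the constant behaviour of $\tilde w$ on $\R_+$. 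Then Sawyer's theorem \cite[Theorem 1]{Sw86} gives boundedness of $M^+$ on $L^p_{\tilde w}(\R)$. Finally, for $f\in L^p_w(\R_-)$ regarded as a function on $\R$ supported in $\R_-$, one has $\chi_{\R_-}\cdot M^+ f = \chi_{\R_-}\cdot M^+(f\chi_{\R_-})$ because averages $\frac1h\int_t^{t+h}$ with $t\leq 0$ only see values of $f$ that may spill into $\R_+$ where $f$ vanishes — so $M^+_- f = \chi_{\R_-} M^+ f \leq M^+ f$ pointwise on $\R_-$, and
\[
\|M^+_- f\|_{L^p_w(\R_-)} \;=\; \|\chi_{\R_-} M^+ f\|_{L^p_{\tilde w}(\R)} \;\leq\; \|M^+ f\|_{L^p_{\tilde w}(\R)} \;\leq\; C\|f\|_{L^p_{\tilde w}(\R)} \;=\; C\|f\|_{L^p_w(\R_-)} .
\]
For $M^-_-$ one runs the same scheme with $M^-$ in place of $M^+$ and the $A_p^-(\R_-)$ condition; here one must check that $M^-_- f(t)$ for $t\leq 0$ only involves values of $f$ on $(-\infty,t]\subseteq\R_-$, so in fact $M^-_- f = M^- f$ on $\R_-$ for $f$ supported there, and no nontrivial extension is needed — the $A_p^-(\R_-)$ condition is literally Sawyer's $A_p^-$ condition localised, and one may extend $\tilde w$ arbitrarily (e.g.\ by a constant) on $\R_+$ since those values are never tested.

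\emph{Main obstacle.} The delicate point is the sufficiency direction, and specifically verifying that the extension $\tilde w$ genuinely lies in the full-line Sawyer class with a quantitative constant depending only on $[w]_{A_p^+(\R_-)}$. One must handle triples $a<b<c$ straddling $0$: when $b\leq 0<c$ the middle factor $\int_b^c \tilde w^{1-p'}$ splits across $0$, and when $0\leq a$ the triple lies entirely in $\R_+$ where $\tilde w$ is constant and the $A_p$ quantity is a bounded constant; the genuinely new estimates are those with $a<b\leq 0<c$, where one needs the restricted condition \eqref{A+-} to absorb the $\R_-$ part while the constant extension controls the $\R_+$ part (using $(c-a)^{-p}$ versus the partial lengths). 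This bookkeeping — essentially a gluing lemma for one-sided $A_p$ weights across a point — together with the analogous care in the reflected cases, is where the real work lies; everything else is either Sawyer's theorem applied verbatim or the elementary pointwise identities $M^\pm_\mp f = \chi \cdot M^\pm(f\chi)$ for one-sidedly supported $f$.
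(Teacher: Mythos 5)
Your necessity argument is fine and is essentially the standard one (the paper phrases it via the weak-type $(p,p)$ estimate, but testing the strong inequality on $f=\chi_{[b,c]}w^{1-p'}$ is an equivalent computation). The sufficiency direction, however, is fundamentally broken, and the flaw is exactly at the step you flag as the ``delicate point'': extending $w$ to a $\tilde w\in A_p^+(\R)$ with a controlled constant is not merely delicate bookkeeping --- it is impossible in general. The restricted condition \eqref{A+-} only ranges over triples $a<b<c\le 0$, so it places no upper bound at all on the growth of $w$ near $0^-$. As noted in Remark \ref{rem.weights}, \emph{every} increasing function on $(-\infty,0)$ lies in $A_p^+(\R_-)$; take for instance $w(t)=1/(-t)$. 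Then $\int_{-1}^0 w\,\ud t=+\infty$, so for \emph{any} extension $\tilde w$ and the triple $a=-1$, $b=0$, $c=1$, the factor $\int_a^b\tilde w\,\ud t$ in the Sawyer $A_p^+(\R)$ quantity is infinite, and $\tilde w\notin A_p^+(\R)$. Hence Sawyer's theorem on the line is simply not applicable, and your chain of inequalities collapses at $\|M^+f\|_{L^p_{\tilde w}(\R)}\le C\|f\|_{L^p_{\tilde w}(\R)}$. The same obstruction appears in the reflected cases. This is precisely what makes $A_p^+(\R_-)$ a genuinely larger class than the restriction of $A_p^+(\R)$, and is the raison d'\^etre of the theorem.

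The paper does not reduce to the full-line result. Instead it proves the sufficiency intrinsically on $\R_-$: a weak-type $(p,p)$ estimate for $M^+_-$ under $A_p^+(\R_-)$ (Lemma \ref{lem.weak-estim}, using a stopping-time decomposition of the level sets \`a la Martin-Reyes), a self-improvement/openness property $A_p^+(\R_-)\subseteq A_q^+(\R_-)$ for some $q<p$ (Lemma \ref{lem.openness}, via a reverse-H\"older-type inequality for $\sigma=w^{1-p'}$), and then the Marcinkiewicz interpolation theorem between the weak $(q,q)$ bound and the trivial $L^\infty$ bound. If you want to salvage a reduction strategy, you would have to show directly that these two lemmas hold on $\R_-$, at which point you have essentially reproduced the paper's proof rather than invoked Sawyer's theorem as a black box.
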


We start with some preliminary observations on the pattern of the proof of Theorem \ref{thm.restSawyer} provided below.
Namely, the proof of this result follows the idea of an alternative proof of Sawyer's theorem given by Martin-Reyes in \cite{MR93}. By means of the classical Marcinkiewicz theorem, Theorem \ref{thm.restSawyer} follows from the fact that Sawyer's $A^\pm_p$ conditions on $\R_-$ (resp. $\R_+$) characterize the weights $w$ for which the operators 
$M^\pm_-$ (resp. $M^\pm_+$) are of weak type $(p,p)$ with respect to $(\R_-, w\,\ud t)$ (resp. $(\R_+, w\, \ud t)$), and the fact that the classes $A^\pm_p(\R_-)$ (resp. $A^\pm_p(\R_+)$) possess the openness property; see Lemmas \ref{lem.weak-estim} and \ref{lem.openness} below.

Moreover, one can easily check that 
\begin{align}\label{M-red.step}
 M^\pm_-f(t) & = M^\mp_+(f(-\cdot))(-t), \quad (t\in\R_- , \;f\in L^p_w(\R_- ),\; w\in A^\pm_p(\R_-)), 
\end{align}
These formulas allow one to reduce the proof of Theorem \ref{thm.restSawyer} (and of Lemmas \ref{lem.weak-estim} and \ref{lem.openness}) to one of the following pairs of operators; e.g., either $M^+_-$ and $M^+_+$, or $M^-_-$ and $M^+_-$, or also $M^-_+$ and $M^+_+$. 
The direct proofs for each of these pairs follow the same ideas, but their presentations differ in several details. Since the proofs of relevant results from \cite{MR93, Sw86} are formulated for the operator $M^+$ and the Sawyer $A^+_p(\R)$ classes, and since the presentation of the proof for the operators $M^+_\pm$ and the classes $A^+_p(\R_\pm)$, except for some details, follows essentially that of \cite[Theorem 1 and Proposition 3]{MR93}, we provide below the direct proof just for this pair. The same remark applies to the proofs in Section 3. This allows us to refer the reader to \cite{MR93} for common ingredients, 
and to provide only main supplementary observations which should be made. Then, the corresponding statements for $M^-_\pm$ and $A^-_p(\R_\pm)$ follow simply from 
(\ref{M-red.step}). 

\begin{lemma}\label{lem.weak-estim} 
Let $1< p < \infty$, and $w$ be a weight on $\R_\pm$. Then the operator $M^+_\pm$ is of weak type $(p,p)$ with respect to $(\R_\pm, w\,\ud
t)$, that is, 
\begin{equation} \label{equ1}
 w\left(\{ t\in \R_\pm: M^+_\pm f(t) >\lambda \}\right)\leq \frac{C}{\lambda^p}
 \int_{\R_\pm}|f|^p w\;\ud t \quad (\lambda >0, \; f\in L^p_w(\R_\pm ))
\end{equation}
if and only if $w\in A^+_p(\R_\pm)$. Moreover, if $w\in A^+_p(\R_\pm)$, then we can take $C=4^p[w]_{A_p^+(\R_\pm)}$ in (\ref{equ1}). The analoguous statement holds for the operators $M_\pm^-$. 
\end{lemma}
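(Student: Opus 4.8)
The plan is to prove Lemma~\ref{lem.weak-estim} for the pair $M^+_-$ and $M^+_+$, using the reduction formula (\ref{M-red.step}) to transfer everything else, and to follow closely the structure of Martin-Reyes \cite[Theorem 1, Proposition 3]{MR93}, pointing out the modifications forced by the restriction to a half-line. I will treat $M^+_-$ on $\R_-$ in detail; the case of $M^+_+$ on $\R_+$ is symmetric via the change of variables $t\mapsto -t$ combined with (\ref{M-red.step}).

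\emph{Necessity.} Suppose the weak type $(p,p)$ inequality (\ref{equ1}) holds for $M^+_-$. Fix $a<b<c\leq 0$ and a nonnegative $g\in L^{p'}$ supported in $(b,c)$; set $f:=g^{p'-1}w^{1-p'}\chi_{(b,c)}$. For $t\in(a,b)$ one has $M^+_-f(t)\geq M^+f(t)\geq \frac{1}{c-a}\int_b^c g^{p'-1}w^{1-p'}\,\ud t=:\lambda$ (using $t<b<c$ so the interval $(t,c)$ contains $(b,c)$ and has length at most $c-a$). Applying (\ref{equ1}) with this $\lambda$ gives $w(a,b)\leq \frac{C}{\lambda^p}\int_b^c g^{p'}w^{1-p'}\,\ud t$. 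Rearranging, $w(a,b)\big(\frac{1}{c-a}\int_b^c g^{p'-1}w^{1-p'}\big)^p\leq C\int_b^c g^{p'}w^{1-p'}$, and then taking the supremum over admissible $g$ (equivalently, optimizing, i.e. letting $g^{p'-1}w^{1-p'}\to \chi_{(b,c)}$, which by a standard truncation argument recovers the full integral $\int_b^c w^{1-p'}$) yields $\frac{1}{(c-a)^p}w(a,b)\big(\int_b^c w^{1-p'}\big)^{p-1}\leq C$. Hence $[w]_{A_p^+(\R_-)}\leq C<\infty$. The only subtlety here, compared with the line case, is to make sure that the three points can be taken with $c\le 0$ and that the covering/truncation argument stays inside $\R_-$, which is automatic since all intervals involved are subintervals of $(a,c)\subseteq\R_-$.

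\emph{Sufficiency.} Assume $w\in A_p^+(\R_-)$ with constant $[w]_{A_p^+(\R_-)}$. The point is a one-sided Calder\'on--Zygmund type decomposition adapted to $M^+$: fix $\lambda>0$ and $f\in L^p_w(\R_-)$, extend $f$ by zero to $\R$, and let $\Omega_\lambda:=\{t\in\R_-:M^+_-f(t)>\lambda\}$. By the one-sided Vitali/sunrise covering argument (as in \cite{MR93}; here one uses that $M^+$ looks only to the right, so the relevant stopping intervals $(a_j,b_j)$ have their \emph{right} endpoints controlled and one can arrange $b_j\le 0$, or else $b_j$ is replaced by $0$), one writes $\Omega_\lambda$ as essentially a disjoint union of intervals $I_j=(a_j,b_j)$ with $\frac{1}{|I_j|}\int_{I_j\cup (b_j, c_j)}|f|> \lambda$ for suitable $c_j>b_j$, $c_j\le 0$, with $|(b_j,c_j)|\leq |I_j|$, i.e. $c_j-a_j\le 2(b_j-a_j)$. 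Then
\begin{equation*}
w(\Omega_\lambda)\leq \sum_j w(I_j)\leq \sum_j \frac{1}{\lambda^p (c_j-a_j)^{-p}}\cdot\frac{w(a_j,b_j)}{(c_j-a_j)^p}\Big(\int_{a_j}^{c_j}|f|\Big)^p,
\end{equation*}
and applying H\"older's inequality to $\int_{a_j}^{c_j}|f|=\int |f|w^{1/p}w^{-1/p}$ on $(a_j,b_j)$ against the part on $(b_j,c_j)$, together with the $A_p^+(\R_-)$ condition for the triple $a_j<b_j<c_j\leq 0$, bounds each summand by $C[w]_{A_p^+(\R_-)}\lambda^{-p}\int_{I_j}|f|^pw$; summing over the (boundedly overlapping, in fact disjoint) $I_j$ gives (\ref{equ1}) with $C=4^p[w]_{A_p^+(\R_-)}$, the constant $4^p$ coming from the factor $(c_j-a_j)/(b_j-a_j)\le 2$ appearing to the power $p$ once from $|I_j|$ versus $c_j-a_j$ and once from the Hölder split.

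\emph{Main obstacle and closing.} The main point requiring care is the covering lemma: on the half-line one must verify that the one-sided stopping-time construction of \cite{MR93} can be run without the intervals $(b_j,c_j)$ escaping $\R_-$, i.e. that whenever $M^+f(t)>\lambda$ for $t\in\R_-$ the witnessing interval $(t,t+h)$ can be replaced by one ending at a point $\le 0$ (if $t+h>0$, shrink to $(t,0)$, whose average is still $>\lambda$ since $f$ vanishes on $\R_+$), so that all triples produced satisfy $a_j<b_j<c_j\le 0$ and are therefore legal in the definition of $[w]_{A_p^+(\R_-)}$. Everything else is a verbatim transcription of \cite[proof of Theorem 1]{MR93}. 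Finally, for the operators $M^-_\pm$ one applies (\ref{M-red.step}): $M^-_-f(t)=M^+_+(f(-\cdot))(-t)$, so weak type $(p,p)$ of $M^-_-$ with respect to $(\R_-,w\,\ud t)$ is equivalent to that of $M^+_+$ with respect to $(\R_+,w(-\cdot)\,\ud t)$, and by Definition~\ref{def.weight}(d) the latter holds iff $w(-\cdot)\in A_p^+(\R_+)$, i.e. iff $w\in A_p^-(\R_-)$; the constant is preserved. This completes the proof of all four cases.
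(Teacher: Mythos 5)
Your necessity argument and the symmetry reduction between the four operators are sound and are close in spirit to the paper's (the paper tests directly with $f=w^{1-p'}\chi_{A_\varepsilon}$, you first derive a $g$-dual inequality and then specialize; this is a cosmetic difference). The gap is in the sufficiency direction.

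You decompose $\Omega_\lambda = \{M^+_-f>\lambda\}$ into its Whitney components $I_j=(a_j,b_j)$ and then propose to choose a \emph{single} right-extension $c_j>b_j$ with $c_j-a_j\le 2(b_j-a_j)$, and to close the estimate by one application of H\"older and one application of the $A_p^+(\R_-)$ condition with the triple $a_j<b_j<c_j$. This cannot work as stated. The Riesz sunrise lemma places the relevant $f$-mass \emph{inside} $(a_j,b_j)$ (indeed $\frac{1}{b_j-a_j}\int_{a_j}^{b_j}f\ge\lambda$, with nothing forced outside $b_j$), whereas the $A_p^+(\R_-)$ condition for the triple $a_j<b_j<c_j$ controls $\int_{a_j}^{b_j}w\cdot\bigl(\int_{b_j}^{c_j}w^{1-p'}\bigr)^{p-1}$ --- i.e.\ the $w^{1-p'}$-integral over $(b_j,c_j)$, which would pair via H\"older with $\int_{b_j}^{c_j}|f|$, not with $\int_{a_j}^{b_j}|f|$ or $\int_{I_j}|f|^pw$. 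There is no triple $a<b<c$ attached to $I_j$ alone for which both the weight factor sits on $(a,b)$ and the $f$-mass is carried by $(b,c)$. Your final bound ``$\le C[w]_{A_p^+}\lambda^{-p}\int_{I_j}|f|^pw$'' simply does not follow from a single such step, and the constant count (``$2^p$ once from $|I_j|$ vs.\ $c_j-a_j$ and once from the H\"older split'') is accounting for factors that never appear.

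The missing idea, which the paper supplies following Mart\'in-Reyes, is a \emph{dyadic bisection of each component from the right}: inside $(a,b):=I_j$ build $s_0=a<s_1<s_2<\cdots$ by $\int_{s_{k-1}}^{s_k}f=\int_{s_k}^b f$, so that $\int_{s_{k-1}}^{b}f=4\int_{s_k}^{s_{k+1}}f$. The sunrise estimate $\lambda\le\frac{1}{b-s_{k-1}}\int_{s_{k-1}}^{b}f$ then localizes the $f$-mass to $(s_k,s_{k+1})$, and the $A_p^+(\R_-)$ condition is applied to the triple $s_{k-1}<s_k<s_{k+1}\le 0$, in which the $w$-integral sits on $(s_{k-1},s_k)$ and the $w^{1-p'}$-integral (via H\"older against $\int_{s_k}^{s_{k+1}}f$) sits on $(s_k,s_{k+1})$. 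Summing over $k$ telescopes to $w(I_j)\le 4^p[w]_{A_p^+(\R_-)}\lambda^{-p}\int_{I_j}f^pw$, and summing over $j$ gives the weak type bound. The ``half-line'' issue you correctly flag (ensuring the intervals do not escape $\R_-$) is in fact automatic for this scheme, since all the $s_k$ lie in $I_j\subset\R_-$; no separate $c_j$ outside the component is ever needed. You should replace the single-triple argument with this bisection.
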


\begin{proof} We present the proof only for the operator $M^+_-$. The proof for $M^+_+$ follows exactly the lines of that for $M^+_-$.  Then, the proof for the operators $M^-_\pm$ follows easily by the symmetry argument from (\ref{M-red.step}).

{\em Necessity.} This part of the proof is standard, but we provide it for completeness.
Assume that $w$ is a weight on $\R_-$ such that $M^+_-$ is of weak  type $(p,p)$. Fix $ a < b< c\leq 0$ and $f\in L^p_w(\R_- )$ such that 
$0 <\int_a^b |f| \ud t <\infty$. Note that for every $\lambda >0$ with
$\lambda < \frac{1}{c-a}\int_b^c |f| \ud t$ we have that $[a,b] \subseteq \{ s\in \R_-: M^+_-f(s) > \lambda\}$. 
Consequently, since $\lambda< \frac{1}{c-a}\int_b^c |f| \ud t$ is arbitrary, we get from \eqref{equ1} that $w$ is locally integrable on $(-\infty, 0)$ and  
\[
\int_a^b w\, \ud t \leq \frac{(c-a)^p}{(\int_b^c |f| \ud t)^p} \int_{\R_-} |f|^p w\,\ud t.
\]
Substituting $f:= w^{1-p'}\chi_{A_\varepsilon}$, where $A_\varepsilon:=\{s\in [b,c]: w(s)\geq \varepsilon \}$ for $\varepsilon>0$ small enough, by a limiting argument we find that $w$ is in $A_p^+(\R_-)$.

{\em Sufficiency.} This part of the proof of Lemma \ref{lem.weak-estim} can be obtained by a simple adaptation of the proof of \cite[Theorem 1, $(b)\Rightarrow(a)$]{MR93}. 

Fix $w\in A^+_p(\R_-)$ $(1<p< \infty)$. 
Since $w$ is locally integrable on $(-\infty, 0)$,  it is sufficient to prove (\ref{equ1}) for a nonnegative bounded function $f$ with compact support in $(-\infty, 0)$. 
Since $M^+_-f$ is lower semicontinuous, for every $\lambda>0$ with $\{ t\in \R_-: M^+_-f(t) > \lambda \} \neq \emptyset$, there exists $N\in\N\cup \{\infty\}$ such that
\[
\{ t\in \R_-: M^+_-f(t) >\lambda \}  = \bigcup_{j=0}^N I_j
\] 
with $I_j=(a_j,b_j)$ for all $j=0$, $\dots$, $N.$ All intervals $I_j$ are pairwise disjoint, and
\begin{equation}\label{lem1ineq1}
\lambda \leq \frac{1}{b_j-s}\int_s^{b_j} f \ud t \quad \text{for every } s\in [a_j, b_j), \, j=0, \, \dots , \, N.
\end{equation}
For a fixed $j\in\{0$, $\dots$, $N\}$ we set $(a,b):=I_j$. It is sufficient to show that $\int_a^b w\,\ud t\leq C \lambda^{-p}\int_a^b f^p  w\,\ud t $ for some constant $C$ independent of $I_j$, $\lambda$, and $f$. 
Let $(s_k)_{k\geq 0}$ be the increasing sequence defined by the following condition:
\[
s_0:=a\quad \text{and} \quad \int_{s_{k-1}}^{s_k} f\, \ud t = \int_{s_k}^b f\,\ud t \;\; (k\geq 1).
\]
Note that $(a,b)=\bigcup_{k\geq 0} (s_k, s_{k+1}]$, and 
$\int_{s_{k-1}}^b f\,\ud t= 4 \int_{s_{k}}^{s_{k+1}} f \,\ud t$ ($k\geq 1$). Consequently, by \eqref{lem1ineq1},
\[
\lambda \leq \frac{1}{b-s_{k-1}} \int_{s_{k-1}}^b f\,\ud t=  \frac{4}{b-s_{k-1}}\int_{s_{k}}^{s_{k+1}} f \,\ud t \quad (k\geq 1).
\]
Since $w$ is locally integrable on $(-\infty,0)$, by H\"older's inequality and \eqref{A+-} we obtain
\begin{align*}
 \int_{s_{k-1}}^{s_k}w\,\ud t & \leq \frac{4^p}{\lambda^p(b - s_{k-1})^p} \int_{s_{k-1}}^{s_k} 
 w\,\ud t \left(\int_{s_{k}}^{s_{k+1}} w^{1-p'}\,\ud t \right)^{p-1} \int_{s_k}^{s_{k+1}} f^p w\,\ud t\\
 & \leq \frac{4^p[w]_{A_p^+(\R_-)}}{\lambda^p}\left(\frac{s_{k+1} - s_{k-1}}{b - s_{k-1}}\right)^p  \int_{s_k}^{s_{k+1}} f^p w\,\ud t\leq \frac{4^p[w]_{A_p^+(\R_-)}}{\lambda^p} \int_{s_k}^{s_{k+1}} f^p 
 w\,\ud t.
\end{align*}
Summing over $k\geq 1$ we get: 
\[
\int_{a}^{b} w\, \ud t \leq  \frac{4^p[w]_{A_p^+(\R_-)}}{\lambda^p} \int_{a}^{{b}} f^p w\, \ud t.
\]
This completes the proof.
\end{proof}

The second lemma asserts that the classes $A_p^+ (\R_\pm)$ and $A_p^- (\R_\pm)$ possess the openness property.

\begin{lemma}\label{lem.openness}
 Let $1<p<\infty$ and $w\in A_p^+ (\R_\pm)$. Then,  
\[
\inf\{q>1: w\in A_q^+ (\R_\pm)\} < p . 
\]
The corresponding result holds for the classes $A_p^- (\R_\pm)$.
\end{lemma}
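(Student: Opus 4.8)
The plan is to follow the classical route to the openness (reverse Hölder / self-improvement) property of Muckenhoupt-type classes, adapted to the one-sided classes on the half-line, exactly as in Martin-Reyes \cite{MR93}. By the symmetry relation (\ref{M-red.step}) it suffices to treat $w\in A_p^+(\R_-)$; the cases $A_p^+(\R_+)$, $A_p^-(\R_\pm)$ follow by reflection. The core is a reverse Hölder inequality for the ``second factor'' in Sawyer's condition: there exist $\varepsilon>0$ and $C$ such that for all $a<b<c\le 0$,
\[
\left(\frac{1}{c-b}\int_b^c (w^{1-p'})^{1+\varepsilon}\,\ud t\right)^{\frac{1}{1+\varepsilon}}
\le \frac{C}{c-b}\int_b^c w^{1-p'}\,\ud t .
\]
Equivalently, setting $\sigma:=w^{1-p'}$, one shows $\sigma$ satisfies a one-sided reverse Hölder estimate on intervals to the right. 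Once this is available, write $q<p$ with $q'=(1+\varepsilon)p'$ (so $q-1=(p-1)/(1+\varepsilon)$ after the usual bookkeeping), and check directly from the definition (\ref{A+-}) that $[w]_{A_q^+(\R_-)}<\infty$: the first factor $\int_a^b w$ is unchanged, and for the second factor one uses Hölder in reverse together with the reverse Hölder inequality above to bound $\big(\int_b^c w^{1-q'}\big)^{q-1}$ by a constant times $(c-b)^{\text{(power)}}\big(\int_b^c w^{1-p'}\big)^{p-1}$, and then absorbs the length factors using $c-b\le c-a$.

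The mechanism for proving the one-sided reverse Hölder inequality is the standard one: the openness property of $A_p^+$ is deduced from (i) the weak type $(p,p)$ characterization in Lemma \ref{lem.weak-estim}, and (ii) a Calderón--Zygmund type decomposition at the right endpoint. Concretely, fix $a<b<c\le 0$ and normalize. Using the stopping-time sequence $(s_k)$ of the kind already employed in the proof of Lemma \ref{lem.weak-estim} (bisecting the mass of $\sigma$ from the right), one obtains a ``one-sided $A_\infty^+$'' estimate: $|E|/(c-a)\le C\,(\sigma(E)/\sigma(a,b))^{\delta}$ for measurable $E\subseteq[b,c]$, i.e. (\ref{Ainfty-}) for $\sigma$ with the roles adjusted. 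This comparison of Lebesgue measure and $\sigma$-measure on one-sided configurations is then upgraded, by the classical good-$\lambda$ / distribution-function argument (integrating $\sigma^{1+\varepsilon}$ via the layer-cake formula against the $A_\infty^+$ bound), into the reverse Hölder estimate displayed above, for $\varepsilon$ small depending only on $C$, $\delta$. All of this is verbatim the argument of \cite[Proposition 3 and Theorem 1]{MR93} with $\R$ replaced by $\R_-$ and intervals replaced by triples $a<b<c\le 0$; the only genuinely new point to verify is that every truncation and every auxiliary interval stays inside $\R_-$, which it does because the half-line $(-\infty,0)$ is closed under the relevant operations (moving endpoints leftward).

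The main obstacle is bookkeeping rather than conceptual: one must be careful that the one-sided configurations $a<b<c\le 0$ required in Definition \ref{def.weight} are preserved throughout the stopping-time decomposition, and that the constants produced depend only on $[w]_{A_p^+(\R_-)}$ and $p$ (so that the resulting $\varepsilon$, hence $q$, is uniform). A minor subtlety is that in the one-sided setting the natural object to self-improve is the conjugate weight $\sigma=w^{1-p'}$ on intervals ``to the right'' of the measuring interval, so one should phrase the reverse Hölder inequality and the $A_\infty^+$-type estimate for $\sigma$ (as opposed to $w$) and keep track of the exponent arithmetic $q-1 = (p-1)/(1+\varepsilon)$ correctly; other than that, the adaptation is routine.
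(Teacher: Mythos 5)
The core step of your proposal---the classical reverse H\"older inequality
\[
\left(\frac{1}{c-b}\int_b^c \sigma^{1+\varepsilon}\,\ud t\right)^{\frac{1}{1+\varepsilon}}
\leq \frac{C}{c-b}\int_b^c \sigma\,\ud t , \qquad \sigma:=w^{1-p'},
\]
uniformly over $a<b<c\le 0$---is \emph{false} for weights $w\in A_p^+(\R_-)$, and this is precisely where the one-sided theory departs from the Muckenhoupt case. Take $w(t)=e^t$ on $\R_-$; since $w$ is increasing, Remark~\ref{rem.weights} gives $w\in A_p^+(\R_-)$, while $\sigma(t)=e^{(1-p')t}$ grows without bound as $t\to-\infty$. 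Fixing $c$ and letting $b\to-\infty$, a direct computation gives
\[
\frac{\bigl(\frac{1}{c-b}\int_b^c\sigma^{1+\varepsilon}\,\ud t\bigr)^{1/(1+\varepsilon)}}{\frac{1}{c-b}\int_b^c\sigma\,\ud t}\ \sim\ (c-b)^{\varepsilon/(1+\varepsilon)}\ \longrightarrow\ \infty ,
\]
so no pair $(C,\varepsilon)$ can work. The structural reason your good-$\lambda$/layer-cake derivation cannot succeed is already visible in Definition~\ref{def.weight}: the one-sided $A_\infty$ conditions compare $|E|/(c-a)$ not with $\sigma(E)/\sigma(I)$ for the \emph{same} interval $I$ containing $E$, but with the $\sigma$-mass of an \emph{adjacent} interval on one side; the classical argument needs the two-sided comparison on a single interval, which is exactly the information that is unavailable here. (A secondary slip: since $\sigma=w^{1-p'}\in A_{p'}^-(\R_-)$, the estimate one can hope to extract for $\sigma$ is of $A_\infty^-(\R_-)$ type, not the $A_\infty^+$ form written in your proposal.)

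What the paper proves, following Martin-Reyes \cite{MR93}, is the genuinely weaker one-sided reverse H\"older inequality \eqref{weakrH}, in which the average of $\sigma$ on the right-hand side is replaced by $M^+_-(\sigma\chi_{(b,c)})(b)$; this is a supremum of anchored averages and can be arbitrarily larger than $\frac{1}{c-b}\int_b^c\sigma$, consistently with the counterexample above. It is proved not from an $A_\infty$ comparison but by the density estimate \eqref{lem2inq1} built from the stopping-time sequence, followed by integration of the distribution function. Correspondingly, passing from \eqref{weakrH} to $w\in A_q^+(\R_-)$ for some $q<p$ is not a two-line H\"older manipulation: the paper introduces the decreasing stopping sequence $s_0=b>s_1>\dots\geq a$ determined by dyadic growth of $\int_{s_k}^c\sigma^{1+\delta}$, applies \eqref{weakrH} at every scale, and closes the estimate using boundedness of $M^+_-$ on $L^{p+\delta}_w$ (which in turn rests on Lemma~\ref{lem.weak-estim} and Marcinkiewicz interpolation), arriving at $q=(p+\delta)/(1+\delta)$. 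You would need to replace your reverse H\"older step by this weaker one-sided statement and rework the final bookkeeping along these lines.
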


Since $w\in A_p^- (\R_\pm )$ if and only if $w(- \cdot)\in A_p^+ (\R_\mp)$, it is sufficient to prove Lemma \ref{lem.openness} for the classes $A_p^+ (\R_\pm)$. The proof in this case can be obtained by a simple adaptation of the proof of \cite[Proposition 3]{MR93}. We give some details for the convenience of the reader and for our further purposes; see, for example, Corollary \ref{unif bound} below.

\begin{proof}
We consider only the case of the classes $A^+_p(\R_-)$. The same arguments apply to $A^+_p(\R_+)$.
Fix $p\in (1,\infty)$ and $w\in A_p^+(\R_-)$. By H\"older's inequality one can show that $A^+_q(\R_-)\subseteq A^+_p(\R_-)$ for every $q\in (1,p)$. Therefore, it is sufficient to show that there exist $q<p$, $C>0$ such that:
\begin{equation}\label{A++s}
\sup_{a<b<c<0}\frac{1}{(c-a)^q}
\left( \int_a^b w \; \ud t\right) \, \left( \int_b^cw^{1-q'} \; \ud t \right)^{q-1}\leq C .
\end{equation}

The proof of \eqref{A++s} is based on a variant of the reverse H\"older inequality for the weight 
$\sigma:=w^{1-p'}$, which says that there exist constants  $C$, $\delta>0$ such that
\begin{equation} \label{weakrH}
\frac{1}{c-b}\int_b^c \sigma^{1+\delta} \ud t \leq C \left( M^+_-(\sigma \chi_{(b,c)}) (b)\right)^{1+\delta} 
\end{equation}
for every $b<c<0$. To prove it, fix $I=(b,c)$ ($b<c<0$) and  note that $\sigma\in A_{p'}^-(\R_-)$. Additionally, assume that $\sigma^{1+\delta}$ is locally integrable on $(-\infty, 0)$ for any $\delta >0$.
 Following the idea of the proof of \cite[Lemma 5]{MR93}, set $\lla_0:=M^+_-(\sigma\chi_I)(b)$
  and $S_\lla:= \{t\in \R_-: M^+_-(\sigma\chi_I)(t)>\lambda\}$ ($\lambda>\lambda_0$). 
 For $\lla>\lla_0$ with $S_\lla\neq \emptyset$, similarly as in the proof of Lemma \ref{lem.weak-estim}, we can write $S_\lla =\bigcup_j I_j$. Note that   $S_\lla \subseteq I$. 
Furthermore,  there exist constants $\alpha$, $\beta>0$ such that 
\begin{equation}\label{lem2inq1}
 |\{ t\in I_j: \sigma(t)>\beta \lambda \}| > \alpha|I_j| \quad \text{for all } j .
\end{equation} 
To see it, similarly as in the proof of Lemma \ref{lem.weak-estim}, for a fixed interval $I_j$, define an increasing sequence $(s_k(j))=(s_k)$ by the following condition:
\[
s_0:=a_j, \quad  s_k<s_{k+1}, \quad \int_{s_k}^{s_{k+1}} \sigma\, \ud t = \int^{b_j}_{s_{k+1}} \sigma\, \ud t.
\]
Therefore, by (\ref{lem1ineq1}) with $f$ replaced by $\sigma\chi_I$, we have that 
\[
\lambda \leq \frac{1}{b_j-{s_{k}}}\int_{s_{k}}^{b_j} \sigma\; \ud t = \frac{2}{b_j-{s_{k}}}\int_{s_{k+1}}^{b_j} \sigma\; \ud t = \frac{4}{b_j-{s_{k}}}\int_{s_{k+1}}^{s_{k+2}}\sigma\; \ud t\quad (k\geq0).
\] 
Set $E_k:=E_{k, \beta}:=\left\{t\in (s_k, s_{k+1}]: \sigma(t) \leq \beta \frac{4}{b_j-{s_{k}}}\int_{s_{k+1}}^{s_{k+2}}\sigma\; \ud t \right\}$ ($\beta>0$). 

Since $\sigma\in A^-_{p'}(\R_-)$ we get:
\begin{align*}
 \left( \frac{|E_k|}{s_{k+2} - s_{k}}\right)^{p' -1} & =
 \frac{4}{{b_j} - s_{k}} \left( \int_{s_{k+1}}^{s_{k+2}}\sigma\; \ud t\right)\, \left( \frac{1}{s_{k+2} - s_{k}}\int_{E_k} \left( \frac{4}{{b_j} - s_{k}}\int_{s_{k+1}}^{s_{k+2}}\sigma\, \ud t\right)^{1-p} \ud \tau \right)^{p' -1}\\
 &\leq \frac{4\beta }{{b_j} - s_{k}} \left(\int_{s_{k+1}}^{s_{k+2}}\sigma\; \ud t\right)\, \left( \frac{1}{s_{k+2} - s_{k}}\int^{s_{k+1}}_{s_{k}} \sigma ^{1-p}\; \ud t\right)^{p' -1}\\
 &\leq 4 \beta [\sigma]_{A_{p'}^-(\R_-)}.
\end{align*}
Since $\{t\in (s_k, s_{k+1}]: \sigma(t) \leq \beta \lambda\}\subseteq E_k$, we obtain that:
\begin{align*}
 |\{t\in I_j: \sigma(t) \leq \beta \lambda\}| & \leq \sum_{k\geq 0} |E_k| \leq (4\beta [\sigma]_{A_{p'}^-(\R_-)})^{p-1} \sum_{k\geq 0} (s_{k+2} - s_{k}) \\
& \leq 2(4\beta [\sigma]_{A_{p'}^-(\R_-)})^{p-1}.
\end{align*}
 This gives, 
\begin{equation}\label{alpha}
 |\{t\in I_j: \sigma(t) > \beta \lambda\}|\geq |I_j| \left(1- 2(4\beta [\sigma]_{A_{p'}^-(\R_-)})^{p-1}\right).
\end{equation}
By taking $\beta$ small enough, we obtain (\ref{lem2inq1}) for $\alpha:=1- 2(4\beta [\sigma]_{A_{p'}^-(\R_-)})^{p-1}$. 
Consequently,  by (\ref{lem1ineq1}) (for $f=\sigma\chi_I$, and note also that $a_j\notin S_\lla$) and  (\ref{lem2inq1}), we have that 
\begin{align*}
\sigma\left(S_\lla\right) 
 & = \sum_j \sigma(I_j) =  \lambda \sum_j |I_j| \\
 &\leq \alpha^{-1} \lambda |\{s\in I: \sigma(s) > \beta \lambda\}|. 
\end{align*}

To continue the proof of (\ref{weakrH}), by Lebesgue's differentiation theorem, we get 
\[
\sigma\left(\{ s\in I: \sigma(s) >\lambda \} \right) \leq\sigma (S_\lla)\quad 
\textrm{for every } \lla>\lla_0.
\]
Now, on the one hand,
\begin{align*}
 \int_{\lambda_0}^\infty \lla^{\delta -1} \sigma(\{ s\in I: \sigma(s)> \lla\})\,\ud \lla
 & \leq  \int_{\lambda_0}^\infty \frac{\lla^\delta}{\alpha} 
 \left| \{s\in I: \sigma(s) > \beta \lambda\} \right| \,\ud \lla\\
 & \leq  \frac{1}{(1+\delta)\alpha \beta^{1+\delta}} \int_I \sigma^{1+\delta} \,\ud t.
\end{align*}
And, on the other hand, 
\begin{align*}
 \int_{\lambda_0}^\infty \lla^{\delta -1} \sigma(\{ s\in I: \sigma(s)> \lla\})\,\ud \lla
 & = \int_{\{s\in I: \sigma(s) >\lla_0\}} \sigma(t) \int_{\lla_0}^{\sigma(t)} \lla^{\delta -1} \,\ud \lla \,\ud t \\
 & = \int_{\{s\in I: \sigma(s) >\lla_0\}} \sigma(t) \left( \frac{\sigma(t)^{\delta}}{\delta} - \frac{\lla_0^\delta}{\delta} \right) \,\ud t \\
 & \geq  \frac{1}{\delta}\int_I \sigma^{1+\delta}(t) \,\ud t - \frac{\lla_0^\delta}{\delta} \int_I 
 \sigma(t) \,\ud t.
\end{align*}
Consequently, we get
\begin{equation}\label{delta}
 \left(\frac{1}{\delta} - \frac{1}{1+\delta\alpha\beta^{1+\delta}}\right) 
\int_I \sigma^{1+\delta}\; \ud t \leq \frac{\lambda^\delta_0}{\delta}\int_I \sigma\; \ud t
\end{equation}
for every $\delta>0$. Therefore, by taking $\delta$ small enough, we find (\ref{weakrH}). 
To remove our additional assumption on the integrability of $\sigma^{1+\delta}$, set 
 $\sigma_k:=\inf(\sigma, k)$ ($k>0$). It is easy to check that $\sigma_k$ belongs to $A^-_{p'}(\R_-)$ with $[\sigma_k]_{A^-_{p'}(\R_-)}\leq 2^p(1+ [\sigma]_{A^-_{p'}(\R_-)})$ ($k>0$).
Then, by an appropriate choice of the constants $\alpha$, $\beta$ and $\delta$, and by a standard limiting argument, we get the general case.

We are now in a position to show \eqref{A++s} with $q:=\frac{p+\delta}{1+\delta}$ for any $\delta>0$ such that \eqref{weakrH} holds. Fix $a<b<c<0$, and note that $\sigma^{1+\delta}$ is integrable over $(a,c)$. Following \cite{MR93}, define a decreasing sequence $(s_k)_{k=0}^N$ as follows: 
\begin{align*}
& s_0:=b> s_1 > ...> s_N \geq a=:s_{N+1} , \\
& \int_{s_k}^c \sigma^{1+\delta} \,\ud t=2^k\int_b^c \sigma^{1+\delta} \,\ud t \quad \textrm{ if } k=0,\dots ,N, \quad \textrm{ and} \\
& \int_a^{s_N} \sigma^{1+\delta} \,\ud t < 2^N\int_b^c \sigma^{1+\delta} \,\ud t. 
\end{align*}
In particular, 
$\int_{s_{N+1}}^c \sigma^{1+\delta} \ud t \leq 2^{N+1} \int_b^c\sigma^{1+\delta} \ud t.$
Therefore, by \eqref{weakrH} we get
\begin{align*}
 \int_{a}^b w\, \ud t \left(\frac{1}{c-a}\int_b^c \sigma^{1+\delta} \, \ud t \right)^q
& = \sum_{k=0}^N 2^{-kq} \int_{s_{k+1}}^{s_k} w\, \ud t \left(\frac{1}{c-a}\int_{s_k}^c \sigma^{1+\delta} \, \ud t \right)^q \\
& \leq \sum_{k=0}^N 2^{-kq} \int_{s_{k+1}}^{s_k} w(t)  \left(\frac{1}{c-t}\int_t^c \sigma^{1+\delta} \, \ud s  \right)^q    \ud t \\
& \leq C\, \sum_{k=0}^N 2^{-kq} \int_{s_{k+1}}^{s_k}  \left( M^+_-(\sigma \chi_{(s_{k+1}, c)})(t) \right)^{p+\delta}  w(t) \,\ud t.
\end{align*}
Since the operator $M^+_-$ is bounded on $L^\infty_w (\R_+ )$ and $w\in A^+_{p+\delta}(\R_-)$, by Lemma \ref{lem.weak-estim} and the Marcinkiewicz interpolation theorem, we get that 
$M^+_-$ is bounded on $L^{p+\delta}_w(\R_- )$. Hence,  
\begin{eqnarray*}
 \int_{a}^b w\, \ud t \left(\frac{1}{c-a}\int_b^c \sigma^{1+\delta} \ud t \right)^q
&\leq & C \sum_{k=0}^N 2^{-kq} \int_{s_{k+1}}^{c} \sigma^{1+\delta} \ud t \\
&\leq &  C \sum_{k=0}^N \frac{2^{k+1}}{2^{qk}} \int_{b}^{c} \sigma^{1+\delta} \ud t. \\
\end{eqnarray*}
Since $q>1$, the proof is complete.
\end{proof}

\begin{proof}[Proof of Theorem \ref{thm.restSawyer}]
The necessity follows immediately from Lemma \ref{lem.weak-estim}. 

Conversely, let $w\in A^+_p(\R_\pm)$ for some $p\in (1,\infty)$. By Lemma \ref{lem.openness}, there exists $q<p$ such that $w\in A^+_q(\R_\pm)$.  Since the operator $M^+_\pm$ is bounded on $L^\infty_w(\R_\pm )$, by Lemma \ref{lem.weak-estim} and the Marcinkiewicz interpolation theorem, we get the boundedness of $M^+_\pm$ on $L^p_w(\R_\pm )$. 

By the symmetry argument from the remark following Theorem \ref{thm.restSawyer} (see the equality \eqref{M-red.step}), we get the corresponding statement for $M^-_\pm$, and the proof is complete.
\end{proof}

For further references, we point out the following observation which is crucial for the proof of Theorem \ref{Boyd th} in Section \ref{sec.r.i.extrap}. 

\begin{corollary} \label{unif bound}
Let $1<p<\infty$, and let $\cF$ be a subset of $A_p^+(\R_\pm)$, such that 
\[
\sup_{w\in \cF}[w]_{A_{p}^+ (\R_\pm)} <\infty .
\]
Then, there exists $q<p$ such that $\cF\subseteq A_q^+(\R_\pm)$ and 
\[
\sup_{w\in \cF}[w]_{A_{q}^+(\R_\pm)} <\infty .
\]
In particular, $\sup_{w\in \cF}\|M^+_\pm\|_{p,w}<\infty$.

The corresponding results hold for the classes $A^-_p(\R_\pm)$ and the operators $M^-_\pm$.
\end{corollary}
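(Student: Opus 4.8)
The plan is to track the quantitative constants in the proof of Lemma~\ref{lem.openness} and observe that nothing there depends on the weight $w$ beyond its $A_p^+$-characteristic. First I would recall that the whole argument for the openness property factors through the reverse H\"older inequality \eqref{weakrH} for $\sigma:=w^{1-p'}$, and inspecting the derivation of \eqref{weakrH} one sees that the constants $\alpha$, $\beta$, $\delta$ produced there depend only on $[\sigma]_{A_{p'}^-(\R_-)}$, which in turn is controlled by $[w]_{A_p^+(\R_-)}$ (indeed $[\sigma]_{A_{p'}^-(\R_-)}=[w]_{A_p^+(\R_-)}^{p'-1}$, as follows directly from the definitions). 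Hence, setting $M:=\sup_{w\in\cF}[w]_{A_p^+(\R_\pm)}<\infty$, one can choose a single $\delta=\delta(M,p)>0$ and then a single exponent $q:=\frac{p+\delta}{1+\delta}<p$, independent of $w\in\cF$, for which \eqref{weakrH} holds uniformly over $\cF$ with a uniform constant $C=C(M,p)$. The resummation step at the end of the proof of Lemma~\ref{lem.openness} then yields \eqref{A++s} with a constant $C$ depending only on $M$, $p$, and $q$, which gives $\sup_{w\in\cF}[w]_{A_q^+(\R_\pm)}<\infty$.

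Next I would deduce the bound on the operator norms. By Lemma~\ref{lem.weak-estim}, $M^+_\pm$ is of weak type $(p,p)$ on $L^q_w(\R_\pm)$ with constant $4^q[w]_{A_q^+(\R_\pm)}$, which by the previous paragraph is uniformly bounded over $w\in\cF$; $M^+_\pm$ is of course uniformly (in fact isometrically) bounded on $L^\infty_w(\R_\pm)$. The Marcinkiewicz interpolation theorem between $L^q_w$ and $L^\infty_w$ then gives boundedness on $L^p_w(\R_\pm)$ with an interpolation constant depending only on $q$, $p$ and the endpoint constants, hence $\sup_{w\in\cF}\|M^+_\pm\|_{p,w}<\infty$. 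Finally, the statement for $A^-_p(\R_\pm)$ and $M^-_\pm$ follows at once from the symmetry relation \eqref{M-red.step} together with the identity $[w(-\cdot)]_{A_q^\mp(\R_\mp)}=[w]_{A_q^\pm(\R_\pm)}$, exactly as in the proof of Theorem~\ref{thm.restSawyer}.

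The main (and essentially only) obstacle is bookkeeping: one must verify that at no point in the proof of Lemma~\ref{lem.openness} — in particular in \eqref{lem2inq1}, \eqref{alpha}, \eqref{delta}, and the truncation argument passing from $\sigma$ to $\sigma_k$ — does a constant secretly depend on $w$ itself rather than only on $[w]_{A_p^+(\R_\pm)}$ and $p$. The truncation step deserves a line of care, since there one uses $[\sigma_k]_{A^-_{p'}(\R_-)}\le 2^p(1+[\sigma]_{A^-_{p'}(\R_-)})$, but this bound is already uniform in the required sense. Once this inspection is made, the corollary is immediate, so I would phrase the proof simply as: \emph{rerun the proof of Lemma~\ref{lem.openness}, noting that all constants therein depend on $w$ only through $[w]_{A_p^+(\R_\pm)}$, and conclude by Marcinkiewicz interpolation as in the proof of Theorem~\ref{thm.restSawyer}.}
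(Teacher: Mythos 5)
Your proposal follows exactly the paper's intended route: the paper's justification of Corollary~\ref{unif bound} is precisely the instruction to trace the constants $\alpha$, $\beta$, $\delta$ through the proof of Lemma~\ref{lem.openness} (via \eqref{alpha} and \eqref{delta}) and to invoke the quantitative form of the Marcinkiewicz interpolation theorem, which is what you do, with the helpful explicit observation that $[\sigma]_{A_{p'}^-}=[w]_{A_p^+}^{p'-1}$. One small slip: in the second paragraph you write that $M^+_\pm$ is of weak type $(p,p)$ with constant $4^q[w]_{A_q^+}$, where you mean weak type $(q,q)$; the constant $4^q$ makes clear what is intended, so this does not affect the argument.
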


To see it, the reader should have in mind the inequalities (\ref{alpha}) and (\ref{delta}) which lead to the choice of the constants $\alpha, \beta,$ and $\delta$ in the proof of Lemma \ref{lem.openness}, and the explicit expression of the constants involved in the formulation of the Marcinkiewicz interpolation theorem; see e.g. the formulation in \cite[Theorem 1.3.2, Chapter I]{Gr08}.

\section{The Coifman type inequality}\label{sec.Coifman}

We now turn to a variant of Coifman's inequality for the class $A^+_\infty(\R_-)$, the operator $M^+_-$, and an appropriate class of singular integral operators. 

We first extend the notion of singular integral operators. Namely, we say that a bounded linear operator $T$ from $L^p(\R_\pm;\X)$ into $L^p(\R_\pm;\Y)$ ($p\in(1,\infty)$) is a singular integral operator if there exists a kernel $K$ such that 
\begin{align*}
& Tf(t) = \int_{\R_\pm} K(t,s)f(s) \; \ud s \\
& \text{for every } f\in L^\infty_c (\R_\pm ; \X) \text{ and every } t\in \R_\pm\setminus \supp f.
\end{align*}
Note that in this case the values $K(t,s)$ for $(t,s)\notin \R_\pm\times \R_\pm$ are immaterial, and we can assume that $K$ is defined on $\R_\pm\times \R_\pm$.

Furthermore, for kernels supported on $\{(t,s)\in\R_-\times\R_- : t<s \}$, we relax the conditions $(D_r)$ and $(D_r')$ $(r\in [1,\infty])$ to the following ones:
\[
[K]_{D_{r,-}}:=\sup_{s<0, h>0}h^{\frac{1}{r'}}
\sum_{m=1}^\infty 2^{\frac{m}{r'}} \left(\int_{I_m(s,h)} 
| K(t,s) - K(t,s-h) |_{\cL(\X ,\Y )}^{r} \,\ud t\right)^{\frac{1}{r}}<\infty, \tag{$D_{r,-}$}
\]
\[
[K]_{D_{r,-}'}:=\sup_{t<0, h>0}h^{\frac{1}{r'}}
\sum_{m=1}^\infty 2^{\frac{m}{r'}} \left(\int_{J_m(t,h)} 
| K(t,s) - K(t+h,s) |_{\cL(\X ,\Y )}^{r} \,\ud s\right)^{\frac{1}{r}}<\infty, \tag{$D_{r,-}'$}
\]
where
\begin{align*}
 I_m(s,h)&:=\{ t\in \R_-: 2^m h < s-t \leq 2^{m+1} h\}, \quad \textrm{and} \\
J_m(t,h)&:=\{ s\in \R_-: 2^m h < s-t \leq 2^{m+1} h\} \quad (m\in\N).
\end{align*}
Note that the condition $(D_{1,-})$ can be rewritten as  
\[
[K]_{D_{1,-}} = \sup_{s'<s<0}\int_{\{2(s-s')\leq s-t\}} | K(t,s) - K(t,s') |_{\cL(\X ,\Y )}\,\ud t <\infty .
\]

\begin{theorem}\label{thm.exsio} 
Let  $T$  be a singular integral operator associated with a kernel $K$ supported on $\{ (t,s)\in R_-\times\R_-: t<s\}$. Assume that $K$ 
satisfies the conditions $(D_{1,-})$ and $(D_{r, -}')$ for some 
$r\in (1,\infty)$. Then, for every $p\in (0,\infty )$ and  for every weight $w\in A^+_\infty(\R_-)$, there exists a constant $C=C(p, w, T, [K]_{D_{1,-}}, [K]_{D_{r, -}'})$ such that 
\begin{equation}\label{r}
\int_{\R_-} |Tf|_\Y^p w \;\ud t \leq C \int_{\R_-} \left(M^+_-(|f|_\X^{r'})\right)^{p/r'} w \;\ud t .
\end{equation} for every $f\in L^\infty_c(\R_-; X)$ with $M^+_-\left(|Tf|_\Y \right)\in L^p_w(\R_- )$.

Furthermore, if $\cF\subseteq A^+_p(\R_-)$ ($p\in (1,\infty)$) with $\sup_{w\in \cF}[w]_{ A^+_p(\R_-)}<\infty$, then the constants $C$ can be chosen such that 
\[
\sup_{w\in \cF} C(p, w, T, [K]_{D_{1,-}}, [K]_{D_{r, -}'})< \infty.
\] 
\end{theorem}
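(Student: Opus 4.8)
The plan is to follow the classical Coifman--Fefferman scheme, adapted to the one-sided, half-line setting, with $M^+_-$ playing the role that $M$ plays on $\R^n$. The engine is a good-$\lambda$ inequality relating the distribution functions of $|Tf|_\Y$ and of the relevant maximal function, measured against the weight $w\in A^+_\infty(\R_-)$. The natural maximal operator to use on the left is the sharp-type operator $M^{\#,+}_-$ (a one-sided analogue of the Fefferman--Stein sharp maximal function, presumably introduced in Lemma \ref{FSineq}), so that $\int_{\R_-}|Tf|_\Y^p w\,\ud t \lesssim \int_{\R_-}(M^{\#,+}_-(|Tf|_\Y))^p w\,\ud t$ holds for $w\in A^+_\infty(\R_-)$ and $p\in(0,\infty)$, provided the left-hand side is a priori finite --- this is exactly where the hypothesis $M^+_-(|Tf|_\Y)\in L^p_w(\R_-)$ enters (it guarantees $|Tf|_\Y\in L^p_w$, allowing one to run the good-$\lambda$ argument legitimately).

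First I would establish the pointwise estimate
\[
M^{\#,+}_-(|Tf|_\Y)(t) \;\le\; C\,\big(M^+_-(|f|_\X^{r'})(t)\big)^{1/r'}
\]
for $f\in L^\infty_c(\R_-;X)$. Fix a point $t<0$ and an interval $(t,t+h)\subseteq\R_-$ (or more precisely the relevant one-sided averaging interval to the right of $t$, consistent with the definition of $M^+_-$); split $f = f\chi_{I^*} + f\chi_{(I^*)^c}$ where $I^*$ is a suitably enlarged interval adapted to $(t,t+h)$. For the local part, boundedness of $T$ on $L^{r}(\R_-;X)\to L^r(\R_-;Y)$ (which follows, via interpolation/extrapolation from the assumed $L^p$-boundedness together with the Dini-type conditions $(D_{1,-})$ and $(D'_{r,-})$ in the usual Calder\'on--Zygmund way) plus H\"older's inequality gives a bound by $(M^+_-(|f|_\X^{r'}))^{1/r'}$ at $t$. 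Here the support condition $\supp K\subseteq\{t<s\}$ is crucial: it forces the "far" variable $s$ to lie to the \emph{right} of $t$, matching the forward orientation of $M^+_-$, so that only averages $\frac1h\int_t^{t+h}$ appear and no information about $f$ to the left of $t$ is needed. For the non-local part, the oscillation $|Tf\chi_{(I^*)^c}(t')-Tf\chi_{(I^*)^c}(t'')|$ for $t',t''\in(t,t+h)$ is estimated by dyadically decomposing the complement into the annuli $I_m(s,h)$ / $J_m(t,h)$ and invoking the regularity condition $(D'_{r,-})$ in the $t$-variable (together with $(D_{1,-})$ to control the error terms), summing the geometric series $\sum_m 2^{-m/r'}$; this again produces a constant times $(M^+_-(|f|_\X^{r'})(t))^{1/r'}$.

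Combining the sharp-function domination with the pointwise estimate yields \eqref{r}. For the uniformity statement, the point is that every constant produced along the way depends on $w$ only through $[w]_{A^+_\infty(\R_-)}$ (via the good-$\lambda$ constant in Lemma \ref{FSineq}), and by Corollary \ref{unif bound} a family $\cF\subseteq A^+_p(\R_-)$ with uniformly bounded $A^+_p$-characteristic has uniformly bounded $A^+_\infty$-characteristic; the operator norm $\|T\|$ and the kernel seminorms $[K]_{D_{1,-}}$, $[K]_{D'_{r,-}}$ are fixed, so $\sup_{w\in\cF}C<\infty$ follows. The main obstacle I expect is the careful bookkeeping in the one-sided sharp-maximal-function argument: one must check that the orientation of $M^+_-$, $M^{\#,+}_-$, the placement of the enlarged interval $I^*$ relative to $(t,t+h)$, and the support condition $t<s$ are all mutually compatible, so that the non-local term genuinely only sees $f$ on the correct side and the dyadic annuli $I_m(s,h)$ appearing in $(D_{1,-})$/$(D'_{r,-})$ are exactly the ones that arise. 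The a priori finiteness hypothesis must also be threaded through correctly so that the Fefferman--Stein inequality may be applied; one typically first proves \eqref{r} with $|Tf|_\Y$ truncated or with $w$ replaced by $w\chi_{(-N,0)}$ and then passes to the limit.
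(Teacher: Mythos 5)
Your strategy coincides with the paper's: reduce the claim to a one-sided Fefferman--Stein inequality (Lemma~\ref{FSineq}) between $M^+_-$ and $M^{+,\sharp}_-$ relative to $w\in A^+_\infty(\R_-)$, combined with a pointwise estimate $M^{+,\sharp}_-(|Tf|_\Y)\lesssim (M^+_-(|f|_\X^{r'}))^{1/r'}$, which is exactly Lemma~\ref{lem.point}; the identification of where the support condition $\supp K\subseteq\{t<s\}$ enters (it kills the contribution of $f$ to the left of $t$ and aligns everything with the forward orientation of $M^+_-$) and of the role of the hypothesis $M^+_-(|Tf|_\Y)\in L^p_w$ is also correct. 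One slip: for the local part you should use boundedness of $T$ on $L^q$ for some $q<r'$ (available by Marcinkiewicz interpolation between the weak $(1,1)$ bound furnished by $(D_{1,-})$ and the assumed $L^p$ bound), not $L^r$ --- with exponent $r$ the H\"older step $\bigl(\frac1h\int|f|^r\bigr)^{1/r}\le \bigl(\frac1h\int|f|^{r'}\bigr)^{1/r'}$ goes the wrong way whenever $r>2$. Also, $(D_{1,-})$ is used only to obtain that weak $(1,1)$ bound, not in the oscillation estimate for the far part, which relies solely on $(D'_{r,-})$; and no truncation is needed, since Lemma~\ref{FSineq} is stated precisely under the a priori hypothesis $M^+_-f\in L^{p_0}_w$.
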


The proof is divided into two lemmas. As in the classical case, the main ingredient of the proof of Theorem \ref{thm.exsio} is a variant of the Fefferman-Stein inequality corresponding to the class $A_\infty^+(\R_-)$. To formulate it we start with some preliminaries.

Recall that the one-sided sharp maximal operator $M^{+,\sharp}$ corresponding to the  operator $M^+$ was introduced in Martin-Reyes \& de la Torre \cite{MRTo94} and is given by
\begin{align*}
M^{+,\sharp}f(t) & := \sup_{h>0} \frac{1}{h}\int_t^{t+h} \left( f(s) - \frac{1}{h}\int_{t+h}^{t+2h}f \,\ud\tau \right)^+ \!\ud s \text{ for every } f\in L_{loc}^1(\R). 
\end{align*}
Note that $M^{+,\sharp}f\leq 3 M^+f$ for every $f\in L_{loc}^1(\R)$. We set 
\[
M^{+,\sharp}_- f:=\chi_{\R_-}M^{+,\sharp}f\quad \textrm{ for every } f\in L_{loc}^1(\R_-).
\]
In \cite[Theorem 4]{MRTo94}, Martin-Reyes and de la Torre proved an analogue of the Fefferman-Stein inequality for the operators $M^+$ and $M^{+,\sharp}$, and for Sawyer's class $A_\infty^+(\R)$. The next result is a variant of \cite[Theorem 4]{MRTo94} for the operators $M^+_-$ and $M^{+,\sharp}_-$, and the class $A_\infty^+(\R_-)$.

\begin{lemma}\label{FSineq}
 Assume that $w\in A_\infty^+(\R_-)$, and let $f\geq 0$ be locally integrable such that $M^+_-f\in L^{p_0}_w(\R_- )$ for some $p_0\in(0,\infty)$. Then, for every $p\geq p_0$ there exists a constant 
 $C=C(p, w)$ such that 
\[
\int_{\R_-} (M^+_-f)^p w\, \ud t \leq C \int_{\R_-} (M^{+,\sharp}_- f)^p w\, \ud t.
\]
Furthermore, if $\cF\subseteq A^+_p(\R_-)$ ($1<p<\infty$) satisfies $\sup_{w\in \cF}[w]_{A^+_p(\R_-)}<\infty$, then the constants $C(p,w)$ can be chosen in such a way that
\begin{equation}\label{const}
 \sup_{w\in \cF} C(p, w)< \infty.
\end{equation} 
\end{lemma}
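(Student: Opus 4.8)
The plan is to adapt the classical Fefferman--Stein argument of Martin-Reyes \& de la Torre \cite[Theorem 4]{MRTo94}, following the usual Calder\'on--Zygmund scheme for one-sided weights but carefully tracking the effect of restriction to $\R_-$ and keeping all constants uniform over families $\cF$ with bounded $A_p^+(\R_-)$-characteristic. The starting point is a good-$\lambda$ inequality relating the distribution functions of $M^+_-f$ and $M^{+,\sharp}_-f$ with respect to the measure $w\,\ud t$: there exist $\gamma_0>0$ and, for each $\gamma\in(0,\gamma_0)$, constants $C,\varepsilon>0$ (depending only on the $A_\infty^+(\R_-)$-constants $C,\delta$ of $w$) such that for all $\lambda>0$
\[
w\bigl(\{M^+_-f>2\lambda\}\bigr)\le C\gamma^{\varepsilon}\, w\bigl(\{M^+_-f>\lambda\}\bigr)+ w\bigl(\{M^{+,\sharp}_-f>\gamma\lambda\}\bigr).
\]
Once this is in hand, one multiplies by $\lambda^{p-1}$, integrates in $\lambda$ over $(0,\infty)$, uses the hypothesis $M^+_-f\in L^{p_0}_w\subseteq L^p_w$ (with $p\ge p_0$) to guarantee the finiteness needed to absorb the first term by choosing $\gamma$ small depending on $p$ and the $A_\infty^+$-constants, and arrives at the desired inequality with $C=C(p,w)$ expressed explicitly through $p$, $\gamma$, $\varepsilon$ and the $A_\infty^+(\R_-)$-data of $w$.

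The core of the work is proving the good-$\lambda$ inequality on the half-line. Here I would fix $\lambda>0$ and use the lower semicontinuity of $M^+_-f$ to write $\{t\in\R_-: M^+_-f(t)>\lambda\}=\bigcup_j(a_j,b_j)$ as a disjoint union of intervals, exactly as in the proof of Lemma \ref{lem.weak-estim}. On each component interval $I_j=(a_j,b_j)$ one performs a one-sided Calder\'on--Zygmund-type stopping-time/subdivision argument: split $f=f\chi_{3I_j}+f\chi_{\R_-\setminus 3I_j}$ (or the appropriate one-sided enlargement to the right, since $M^+$ averages forward), observe that on the ``far'' part $M^+_-$ behaves like a constant comparable to $\lambda$ by the definition of $I_j$, and for the ``near'' part control $M^+_-(f\chi_{3I_j})$ pointwise by $M^{+,\sharp}_-f$ plus an error, producing the key set inclusion
\[
\{t\in I_j: M^+_-f(t)>2\lambda,\ M^{+,\sharp}_-f(t)\le\gamma\lambda\}\subseteq \{t\in I_j: \text{a one-sided maximal function of a small function}>c\lambda\}.
\]
Then the weak $(1,1)$ estimate for $M^+_-$ (Lebesgue measure version) bounds the Lebesgue measure of this set by $C\gamma|I_j|$, and finally the $A_\infty^+(\R_-)$ condition \eqref{Ainfty-} — with the triple $a<b<c\le 0$ chosen so that $(a,b)$ is $I_j$ pushed appropriately and $S$ the exceptional subset of $I_j$ — converts this into the weighted estimate $w(S_j)\le C\gamma^{\delta}w(I_j)\le C\gamma^{\delta}w(\{M^+_-f>\lambda\})$ after summing over $j$. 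Summation over $j$ together with handling the set where $M^{+,\sharp}_-f>\gamma\lambda$ separately yields the good-$\lambda$ inequality with $\varepsilon=\delta$.

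The main obstacle I anticipate is the geometric bookkeeping forced by the half-line: the one-sided maximal operator $M^+$ looks \emph{forward}, so when $I_j=(a_j,b_j)$ the relevant enlargement and the ``tail'' of $f$ live to the \emph{right} of $I_j$, and one must check that all the intervals and comparison points stay inside $\R_-$ (or that the part falling outside is harmless because the kernel/operator and the weight are only defined there), so that \eqref{Ainfty-} can legitimately be invoked with $c\le 0$. This is precisely the place where the proof departs in detail from \cite{MRTo94}, and it is where the boundary point $0$ could cause trouble; I expect it to be resolvable by the same truncation device used at the end of the proof of Lemma \ref{lem.openness} (working first with $f$ bounded with compact support in $(-\infty,0)$ and then passing to the limit). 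The uniformity claim \eqref{const} then follows automatically, since by Lemma \ref{lem.openness} and Corollary \ref{unif bound} a bound on $\sup_{w\in\cF}[w]_{A_p^+(\R_-)}$ yields uniform $A_\infty^+(\R_-)$-constants $C,\delta$ for all $w\in\cF$, and every constant produced above depends on $w$ only through $C$, $\delta$ and $p$.
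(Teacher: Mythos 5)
Your plan is the paper's plan: a one-sided good-$\lambda$ inequality adapted from Martin-Reyes \& de la Torre (which the paper obtains from the doubling-type Proposition~\ref{lem.dubbling} for $A_\infty^+(\R_-)$), followed by integration in $\lambda$ and absorption, with uniformity over $\cF$ coming from Corollary~\ref{unif bound}. So the route is the right one, and the geometric half-line concerns and the uniformity argument you describe are handled in the same way as in the paper.

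There is, however, one genuine error in the absorption step as you wrote it. You assert ``$M^+_-f\in L^{p_0}_w\subseteq L^p_w$'' for $p\ge p_0$, but this inclusion is false: on an infinite measure space $L^{p_0}_w$ is not contained in $L^p_w$ for $p>p_0$. Worse, the finiteness of $\|M^+_-f\|_{L^p_w}$ is exactly what the lemma is designed to deliver (assuming $M^{+,\sharp}_-f\in L^p_w$), so positing it a priori begs the question and makes the absorption vacuous. The correct device, which the paper uses, is to run the good-$\lambda$ integration over a truncated range and a truncated set: one shows that
\[
\int_0^N p\lambda^{p-1}\,w\bigl(\{t\le a: M^+_-f(t)>\lambda\}\bigr)\,\ud\lambda
\]
is finite for each fixed $N>0$ and $a<0$, and this is where the hypothesis $M^+_-f\in L^{p_0}_w$ with $p\ge p_0$ actually enters. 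Indeed by Chebyshev $w(\{M^+_-f>\lambda\})\le \lambda^{-p_0}\|M^+_-f\|_{L^{p_0}_w}^{p_0}$, so the integrand is $O(\lambda^{p-1-p_0})$, which is integrable on $(0,N)$ when $p>p_0$, and for $p=p_0$ the integral is directly bounded by $\|M^+_-f\|_{L^{p_0}_w}^{p_0}$. One absorbs the small term into this \emph{finite} truncated quantity (this fixes $\gamma$ in terms of $p$ and the $A_\infty^+$ constants $C,\delta$), and only then lets $N\to\infty$ and $a\to 0^-$. Replacing your false inclusion by this truncation brings your argument into agreement with the paper's proof.
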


The proof of Lemma \ref{FSineq} can be obtained by an adaptation of the techniques developed in \cite{MRTo94}. 
The qualitative information on the constants involved in the various inequalities which leads to our second statement is not stated explicitly in \cite{MRTo94}. Since the second statement in Lemma \ref{FSineq} is crucial for the proof of Theorem \ref{Boyd th} below, for the convenience of the reader we sketch the proof and underline the steps which lead to (\ref{const}).

The proof is based on the following property of the weights in  $A^+_\infty(\R_-)$. See \cite[Theorem 1]{MRPiTo93} for the corresponding result for Sawyer's class. 

\begin{proposition}\label{lem.dubbling}
For every  $w\in A^+_\infty(\R_-)$ there exist constants $C$, $\delta >0$ such that 
\begin{equation}\label{eq.dubbling}
 \frac{w(S)}{w(a,c)}\leq C \left(\frac{|S|}{c-b}\right)^\delta
\end{equation}
for every $a<b<c\leq 0$, and every measurable set $S\subseteq (a,b)$.

Furthermore, if $\cF\subseteq A_\infty^+(\R_-)$ such that 
there exist constants $\delta$, $C$ for which condition (\ref{Ainfty-}) holds uniformly for $w\in \cF$, then (\ref{eq.dubbling}) holds uniformly in $\cF$, too.
\end{proposition}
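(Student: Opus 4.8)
## Proof proposal for Proposition \ref{lem.dubbling}

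The plan is to deduce the "reverse" doubling estimate \eqref{eq.dubbling} from the defining condition \eqref{Ainfty-} of the class $A_\infty^+(\R_-)$ by a complementary-set argument, exactly paralleling the passage from $A_\infty$ to the reverse-doubling property in the two-sided theory (cf. \cite[Theorem 1]{MRPiTo93}). Fix $w\in A_\infty^+(\R_-)$ with constants $C_0$, $\delta_0>0$ as in \eqref{Ainfty-}, and fix $a<b<c\leq 0$ and a measurable set $S\subseteq(a,b)$. The first step is to apply \eqref{Ainfty-} not to $S$ itself but to its complement inside a suitable interval: take the triple $a < b < c$ in the role of the triple in \eqref{Ainfty-} (so the "middle" interval on whose $w$-measure we normalise is $(a,b)$, and the test set lives in $(b,c)$) — but this is the wrong configuration, so instead I would set up the triple so that $(b,c)$ plays the role of the first interval and $(a,b)$ the role of the interval containing the test set. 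Concretely, apply \eqref{Ainfty-} with the triple $a<b<c$ and the test set $E:=(a,b)\setminus S\subseteq(a,b)$ — wait, \eqref{Ainfty-} requires $S\subseteq[b,c]$; so the correct move is to use the triple $(a, b, c)$ with the test set inside $[a,b]$, which is precisely the "resp." form of \eqref{Ainfty-}: $\frac{|S|}{c-a}\le C_0\big(\frac{w(S)}{w(b,c)}\big)^{\delta_0}$.

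So the key steps, in order: (1) Apply the resp.-form of \eqref{Ainfty-} to the triple $a<b<c$ with test set $E:=(a,b)\setminus S$, giving $\frac{|E|}{c-a}\le C_0\big(\frac{w(E)}{w(b,c)}\big)^{\delta_0}$. (2) Choose the splitting point $b'\in(a,b)$ so that $|S\cap(b',b)|\ge\tfrac12|S|$ is controlled, or more simply observe that either $|E|\ge\tfrac12(b-a)$ or $|S|\ge\tfrac12(b-a)$; the interesting case is $|S|$ small compared to $b-a$, in which case $|E|\ge\tfrac12(b-a)\ge\tfrac12(c-b)\cdot\frac{b-a}{c-b}$... this ratio is not bounded, which signals that one more idea is needed. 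The standard fix: first reduce to the case $c-b$ comparable to $b-a$ by subdividing $(a,b)$ into at most finitely many — no, $(a,b)$ can be much longer than $(c-b)$. The genuine argument is iterative: one shows there is $\theta\in(0,1)$ with $w(b,c)\le\theta\, w(a,c)$ whenever $c-b\ge\tfrac12(c-a)$ (apply step (1) with $E=(a,\tfrac{a+c}{2})$, $|E|/(c-a)=\tfrac12$, forcing $w(E)/w(b,c)\ge(2C_0)^{-1/\delta_0}=:\eta$, hence $w(a,c)\ge w(E)\ge\eta\,w(b,c)$ so $w(b,c)\le\eta^{-1}w(a,c)$ — and since $w(a,c)=w(a,b')+\dots$ one iterates halving $c-a$). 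Then \eqref{eq.dubbling} follows by combining this geometric decay of $w$ over telescoping subintervals with \eqref{Ainfty-} applied at the appropriate scale, writing $\frac{|S|}{c-b}$ and $\frac{w(S)}{w(a,c)}$ and choosing the generation $k$ at which $|S|\sim 2^{-k}(c-b)$.

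For the uniformity statement, the point is simply that every constant produced above — namely $\eta=(2C_0)^{-1/\delta_0}$, the decay factor in the geometric iteration, and the final $C$ and $\delta$ in \eqref{eq.dubbling} — is an explicit function of the pair $(C_0,\delta_0)$ alone. Hence if $\cF$ admits common constants $C_0$, $\delta_0$ in \eqref{Ainfty-}, the resulting $C$, $\delta$ in \eqref{eq.dubbling} are common to all $w\in\cF$. I expect the main obstacle to be precisely the bookkeeping in step (2)/(3): organising the telescoping sum of subintervals of $(a,c)$ so that the geometric decay of $w$-mass and the power bound on $|S|$ combine with the right exponent, while keeping every constant an explicit function of $(C_0,\delta_0)$. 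This is routine in the two-sided case, so I would follow \cite[Theorem 1]{MRPiTo93} closely, only checking that the one-sided geometry (all intervals ending at or before $0$, test sets on the correct side) causes no difficulty — which it does not, since \eqref{Ainfty-} is stated in exactly the one-sided form needed.
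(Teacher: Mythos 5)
Your proposal has two concrete problems, one a sign/orientation error and one a genuine gap.

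First, the orientation error. You repeatedly invoke the ``resp.'' form of \eqref{Ainfty-} — test set $S\subseteq[a,b]$, normalised by $w(b,c)$ — and claim it is what the hypothesis gives. But in Definition~\ref{def.weight}(c) that form is the \emph{definition of} $A_\infty^-(\R_-)$, not $A_\infty^+(\R_-)$. The hypothesis $w\in A_\infty^+(\R_-)$ gives you only: for $S\subseteq[b,c]$,
\[
\frac{|S|}{c-a}\leq C\left(\frac{w(S)}{w(a,b)}\right)^\delta .
\]
This is an inequality in the \emph{opposite} direction (Lebesgue measure controlled by $w$-measure, test set on the \emph{right}), while \eqref{eq.dubbling} asks you to control the $w$-measure of a set $S$ sitting on the \emph{left}. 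Steps (1) and (2) of your sketch therefore start from an inequality you do not have.

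Second, and more fundamentally, even after fixing the orientation your iterative scheme only produces a geometric decay of $w$-mass on dyadic \emph{boundary intervals} $(a,\,a+2^{-k}(c-a))$: the defining inequality applied to $S=(b,c)$ with $b$ the midpoint does yield $w(a,b)\leq\theta\,w(a,c)$ for a fixed $\theta<1$, and this iterates. But the target estimate \eqref{eq.dubbling} is a bound for the $w$-measure of an \emph{arbitrary} measurable $S\subseteq(a,b)$ in terms of $|S|$, and the telescoping interval decay gives no handle on sets that are spread throughout $(a,b)$ rather than clustered near $a$. To upgrade from intervals to general measurable sets one needs, in effect, a reverse H\"older inequality for $w$, and that is precisely what the paper establishes as ingredient (iv) before deriving \eqref{eq.dubbling}. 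The paper's actual argument is: (i)\,$\Rightarrow$\,(ii)\,$\Rightarrow$\,(iii)\,$\Rightarrow$\,(iv) to get the one-sided reverse H\"older inequality $\frac{1}{b-a}\int_a^b w^{1+\delta}\,\ud t\leq C\bigl(M^-(w\chi_{(a,b)})(b)\bigr)^{1+\delta}$, then the weak $(1,1)$ estimate for $M^-$ to extract the factor $c-b$, and finally H\"older's inequality $w(S)^{1+1/\delta}|S|^{-1}\leq(\int_a^b w^{1+\delta})^{1/\delta}$ to convert the integral bound into a pointwise-in-$S$ bound. Your plan to ``follow \cite{MRPiTo93} closely'' is the right instinct — that is indeed the paper's approach — but the complementation and geometric-iteration scheme you actually sketch is not what \cite{MRPiTo93} does, and it cannot close the gap for general $S$. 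You need to go through the reverse H\"older inequality.

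Your remarks on uniformity are fine in spirit: all constants in the chain (ii)--(iv) and in the final combination depend explicitly on the pair $(C_0,\delta_0)$ in \eqref{Ainfty-}, so the second part of the Proposition follows once the first is established along those lines.
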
 

\begin{proof}
Consider first the following statements:
\begin{itemize}
 \item [(i)]  $w\in A^+_\infty(\R_-)$. 
 \item [(ii)] For every $\alpha \in(0,1)$ there exists $\beta >0$ such that 
 for every $a<b<c\leq 0$ and every measurable set $S\subseteq (b,c)$, 
 if $w(S)<\beta w(a,b)$, then $|S| <\alpha (c-a)$.
 \item [(iii)] For every $\alpha \in(0,1)$ there exists $\beta >0$ such that if 
 $\lambda >0$ and $a<b<0$ satisfy 
\[
\lambda = \frac{1}{b-a}\int_a^bw\;\ud t\leq \frac{1}{s-a}\int_a^sw\;\ud t\quad\quad \text{for every } s\in (a,b),
\]
 then $|\{ t\in (a,b): w(t)>\beta \lambda \}|>\alpha(b-a)$.
 \item [(iv)] There exist constants $\delta$, $C>0$ such that for every $a<b<0$ we have the following variant of the reverse H\"older inequality
\[
\frac{1}{b-a} \int_a^b w^{1+\delta}\; \ud t \leq C \, \left(M^-(w \chi_{(a, b)})(b) \right)^{1+\delta}.
\]
\end{itemize} 
Then we have (i)$\Rightarrow$(ii)$\Rightarrow$(iii)$\Rightarrow$(iv). The implication (i)$\Rightarrow$(ii) follows immediately from the definition of the $A^+_\infty (\R_-)$ condition. 
For the proof of the implications (ii)$\Rightarrow$(iii) and (iii)$\Rightarrow$(iv) one can 
adapt the proof of \cite[Theorem 1, $(c)\Rightarrow (d)$ and $(e)\Rightarrow (f)$]{MRPiTo93}, which follows in principle the arguments used in \cite{MR93} and is reproduced here in the proofs of Lemmas \ref{lem.weak-estim} and \ref{lem.openness} above; see, in particular, the proof of \eqref{weakrH}.

Then, (\ref{eq.dubbling}) follows in a straightforward way from (iv) and the fact that $M^-$ is of weak type $(1,1)$; see, for example, \cite[Theorem 1]{MR93}. More precisely, the reverse H\"older inequality yields the 
existence of constants $\delta$, $C>0$ such that for every $a<b<c\leq 0$ 
\begin{equation*} 
 \frac{1}{w(a,c)}\int_a^b w^{1+\delta}\, \ud t < C \left(M^-(w\chi_{(a,c)})(s)\right)^\delta \quad \text{for every } s\in (b,c).
\end{equation*}
Assume that $c<0$.  Set $\lambda:=\left(\frac{1}{C w(a,c)}\int_a^b w^{1+\delta}\;\ud t\right)^{1/\delta}$, where $C$ is as in the preceding inequality. Then $\lambda >0$ and  
\[
 (b,c) \subseteq \{ s\in \R_-: M^-(w\chi_{(a,c)})(s) >\lambda \} .
\]
This and the weak $(1,1)$ inequality for $M^-$,
\begin{equation*}
|\{ s\in \R_-: M^-(w\chi_{(a,c)})(s) >\lambda \}| \leq \frac{C_0}{\lambda} \int_a^c w\,\ud t \quad\text{for every } \lla>0 ,
\end{equation*}
imply
\[
c-b\leq C_0 C^{1/\delta} \left(\frac{\int_a^c w\, \ud t}{\int_a^b w^{1+\delta}\,\ud t}\right)^{1/\delta} \int_a^c w\,\ud t .
\]
By H\"older's inequality, for every measurable set $S\subseteq (a,b)$ with $|S|>0$,
\begin{align*}
\left( \int_S w\, \ud t\right)^{1+1/\delta}\, |S|^{-1} & \leq \left(\int_S w^{1+\delta}\, \ud t\right)^{1/\delta} \\
& \leq \left(\int_a^b w^{1+\delta}\, \ud t\right)^{1/\delta} .
\end{align*}
This inequality combined with the preceding inequality yields 
\[
\frac{w(S)}{w(a,c)} \leq C_0 C^{1/\delta} \left(\frac{|S|}{c-b}\right)^{\frac{\delta}{1+\delta}} \text{ for every measurable } S\subseteq (a,b) .
\]
Since $\int_b^0 w\,\ud t\in (0,\infty]$, a limiting argument allows one to drop the assumption that $c<0$. Finally, note that the construction of the constants $\alpha$, $\beta$, $\delta$, $C$ in the statements (ii), (iii) and (iv) above follows that of the corresponding constants in Lemmas \ref{lem.weak-estim} and \ref{lem.openness}; in particular, these constants may be chosen uniformly in $w\in\cF$, if $\cF\subseteq A_\infty^+ (\R_- )$ is as in the second part of the statement. The inequality above then yields the claim. 
\end{proof} 

\begin{proof}[Proof of Lemma \ref{FSineq}]
By using Lemma \ref{lem.dubbling} and following the lines of the proof of 
\cite[Theorem 4]{MRTo94} we get the following variant of the good $\lla$ inequality: 
\[
w\left(\{ t\leq a: M^+_-f(t) > 2 \lla, \; M^{+,\sharp}_-f(t)\leq \gamma \lla  \}\right)\leq 4^\delta C  \gamma^\delta w\left(\{ t\leq a: M^+_-f(t) >\lla  \}\right)
\]
for every $\gamma \in (0,1)$, $\lla>0$, and $a<0$, where $\delta$ and $C$ are the constants from \eqref{eq.dubbling}. 
Then, a standard argument gives 
\begin{align*}
 \int_0^N p\lla^{p-1} w( \{ t\leq a: & M^+_-f(t) >\lla  \} )\, \ud \lla\\
&\leq {\frac{2^{p+1}}{\gamma^p}} \int_0^{\gamma N/2} p\lla^{p-1} w(\{ t\in \R_-: M^{+,\sharp}_-f(t) >\lla  \})\, \ud \lla 
\end{align*} 
for every $a<0$ and $N>0$, where $\gamma:= 4^{-1}(2^{p+1}C)^{-1/\delta}$.
A limiting argument yields the first statement of Lemma \ref{FSineq}. 

For the second one, note that the condition (\ref{Ainfty-}) holds uniformly in $\cF$. 
Indeed,  one can take $C:=\sup_{w\in \cF}\|M^+_-\|_{p,w}<\infty$
and $\delta := 1/p$; see Corollary \ref{unif bound}.
\end{proof}

The second lemma is a counterpart of \cite[Theorems 1.2 and 1.3, Part III]{RuRuTo86}. 

\begin{lemma} \label{lem.point}
With the same assumptions on $T$  as in Theorem \ref{thm.exsio} there exists a constant $C=C(T, [K]_{D_{1,-}}, [K]_{D_{r,-}'})$ such that 
 \begin{equation}\label{point.est}
  M^{+,\sharp}_-(|Tf|_\Y )\leq C \left(M^+_-(|f|_\X^{r'})\right)^{1/r'} \text{ for every } f\in L_c^\infty(\R_-; \X) .
 \end{equation}
\end{lemma}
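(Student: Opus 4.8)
The plan is to estimate the sharp maximal function pointwise by fixing $t<0$ and $h>0$ and bounding, for the interval $[t,t+h]$,
\[
\frac{1}{h}\int_t^{t+h} \Bigl( |Tf(s)|_\Y - \alpha_{t,h} \Bigr)^+ \,\ud s
\]
for a suitably chosen constant $\alpha_{t,h}$, which I would take to be $\alpha_{t,h}=\bigl| \frac{1}{h}\int_{t+h}^{t+2h} T(f\chi_{(t+2h,0)})\,\ud\tau\bigr|_\Y$ or simply an average of $|T g_2|_\Y$ over $(t+h,t+2h)$, where $g_2:=f\chi_{(t+2h,0)}$. The standard splitting is $f=g_1+g_2$ with $g_1:=f\chi_{(-\infty,t+2h)}$ (the "local" part, living to the \emph{left} because the kernel is supported on $\{t<s\}$) and $g_2:=f\chi_{(t+2h,0)}$ (the "far" part). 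Here one must be careful about which direction is "local": since $\supp K\subseteq\{(t,s):t<s\}$, the value $Tf(t)$ depends only on $f(s)$ for $s>t$, so the relevant far part is $g_2$ supported to the right of $t+2h$, and it is exactly on the window $(t+h,t+2h)$ — which sits to the right of $[t,t+h]$, matching the geometry built into $M^{+,\sharp}_-$ — that we compare $Tg_2$ to its value on $[t,t+h]$.

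For the local part $g_1$ I would use the $(D_{1,-}')$-type regularity in the adjoint variable: actually the cleaner route, following \cite[Part III]{RuRuTo86}, is to treat $Tg_1$ on $[t,t+h]$ via the $(D_{r,-}')$ condition, giving
\[
\frac{1}{h}\int_t^{t+h}\Bigl|Tg_1(s) - (Tg_1)_{(t+h,t+2h)}\Bigr|_\Y\,\ud s \lesssim [K]_{D_{r,-}'}\,\bigl(M^+_-(|f|_\X^{r'})(t)\bigr)^{1/r'},
\]
by writing the difference $K(s,\cdot)-K(s',\cdot)$ with $s\in(t,t+h)$, $s'\in(t+h,t+2h)$ so that $|s-s'|\le h$, decomposing the $s$-integration region $(-\infty,t+2h)$ (intersected with $\{t<s\}$, but the support forces $g_1$ effectively onto a bounded piece) into the dyadic annuli $J_m$, applying Hölder in $s$ with exponent $r$ on each annulus, summing the $(D_{r,-}')$ series, and recognizing the resulting averages of $|f|_\X$ over expanding intervals starting near $t$ as controlled by $\bigl(M^+_-(|f|_\X^{r'})(t)\bigr)^{1/r'}$ via Hölder's inequality. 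For the far part $g_2$, on $(t+h,t+2h)$ versus $[t,t+h]$ one uses the $(D_{1,-})$ condition: for $s\in(t,t+h)$, $s'\in(t+h,t+2h)$, the quantity $|Tg_2(s)-Tg_2(s')|_\Y\le \int |K(s,u)-K(s',u)|\,|g_2(u)|_\X\,\ud u$, and since $g_2$ is supported where $u-s'\gtrsim h \gtrsim |s-s'|$ we are in the regime $2|s-s'|\le u-s'$ (after a minor geometric adjustment of constants), so $(D_{1,-})$ rewritten as $\sup_{s'<s}\int_{\{2(s-s')\le s-u\}}|K(u,s)-K(u,s')|\,\ud u$ — wait, here the roles of the two kernel variables must be matched to $(D_{1,-})$ which is a condition on the \emph{first} argument $t$; so for the far part I instead use $(D_{1,-}')$ applied with the shift $s\mapsto s'$ in the first variable, bounding $|K(s,u)-K(s',u)|$ integrated in $u$ over the dyadic far annuli and summing. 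Either way this produces $\lesssim [K]_{D_{1,-}'}\,M^+_-(|f|_\X)(t)\le [K]_{D_{1,-}'}\,\bigl(M^+_-(|f|_\X^{r'})(t)\bigr)^{1/r'}$.

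Assembling: combining the two estimates and taking the supremum over $h>0$ gives \eqref{point.est} with $C$ depending only on $T$ (through an innocuous $L^1\to L^{1,\infty}$ or trivial bound needed to handle one average of $Tf$ itself) and on $[K]_{D_{1,-}}$ and $[K]_{D_{r,-}'}$. I expect the main obstacle to be bookkeeping the geometry of the one-sided dyadic annuli $I_m(s,h)$, $J_m(t,h)$ and verifying that, after replacing $s$ by $s'$ inside $[t+h,t+2h]$ and using $|s-s'|\le h$, the relevant integration regions genuinely fall inside the annuli on which the $(D_{1,-})$ and $(D_{r,-}')$ series are summed — in particular checking the support constraint $\{t<s\}$ interacts correctly with these annuli and that the constant adjustments ($2^m h$ versus $2^{m}|s-s'|$, etc.) only cost finitely many annuli, hence an absolute constant. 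A secondary technical point is justifying that for $f\in L^\infty_c(\R_-;\X)$ the operator $T$ can be represented by the kernel integral on the relevant sets so that all the pointwise manipulations of $Tg_1$, $Tg_2$ are legitimate; this follows from the definition of singular integral operator since on $[t,t+h]$ the function $g_2$ has support disjoint from $[t,t+h]$, and $g_1$ contributes a genuinely singular but integrable piece handled by the $(D_{r,-}')$ cancellation exactly as in \cite[Part III]{RuRuTo86}.
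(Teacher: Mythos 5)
Your decomposition and starting point (bound $M^{+,\sharp}_-$ via $\inf_y \frac{1}{h}\int_t^{t+h}|Tf - y|_\Y$, split $f$ at $t+2h$, exploit the support of $K$ to discard $f\chi_{(-\infty,t]}$) match the paper's, but the two kernel conditions are assigned to the wrong halves of the argument, and neither assignment can work as written. For the local part $g_1$ you propose to run the $(D_{r,-}')$ series over dyadic annuli; however, for $\tau\in(t,t+h)$ and $s'\in(t+h,t+2h)$ the difference $Tg_1(\tau)-Tg_1(s')$ contains the genuinely singular piece $\int_\tau^{s'}K(\tau,u)f(u)\,\ud u$, and even the remaining integral is over $u$ with $u-s'<h\le|\tau-s'|$, i.e.\ never in the annular regime $u-s'\geq 2|\tau-s'|$, so the annuli $J_m$ cannot cover it. No H\"ormander-type smoothness condition on the kernel controls this. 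What the paper does instead is bound $\frac{1}{h}\int_t^{t+h}|Tf_1|_\Y$ directly by the operator norm of $T$ on $L^q(\R_-;\X)$ for some $q<r'$; the step you are missing --- and the only place the hypothesis $(D_{1,-})$ is actually used --- is the Benedek--Calder\'on--Panzone weak $(1,1)$ estimate for $T$, which one derives from $(D_{1,-})$ and the given $L^p$-boundedness and then interpolates to get $T$ bounded on $L^q$ for all $q\in(1,p]$. Your parenthetical ``innocuous $L^1\to L^{1,\infty}$ bound'' gestures at this but the proposal never derives it from $(D_{1,-})$.

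For the far part $g_2$ you settle on $(D_{1,-}')$, but that corresponds to $r'=\infty$: H\"oldering $\int_{J_m}|K(\tau,u)-K(s',u)||f(u)|\,\ud u$ against the $L^1$ norm of the kernel difference leaves you with $\|f\|_{L^\infty(J_m)}$, which sums to $\|f\|_\infty$, not $M^+_-(|f|_\X)(t)$. Your claimed bound $\lesssim [K]_{D_{1,-}'}\,M^+_-(|f|_\X)(t)$ would require a pointwise-decay $(D_{\infty}$-type$)$ condition, which is not a hypothesis. The paper uses $(D_{r,-}')$ with $r>1$ precisely here: choosing $y=Tf_2(t)$ makes the difference $\int_{t+2h}^0\bigl(K(\tau,s)-K(t,s)\bigr)f(s)\,\ud s$, then H\"older with exponent $r$ on each $J_k(t,\tau-t)$ produces $\bigl(\tfrac{1}{|J_k|}\int_{J_k}|f|_\X^{r'}\bigr)^{1/r'}\lesssim\bigl(M^+_-(|f|_\X^{r'})(t)\bigr)^{1/r'}$, and the $(D_{r,-}')$ series handles the kernel factors. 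In short: $(D_{1,-})$ is not a smoothness tool for the far part but the input to the weak $(1,1)$ estimate that controls the local part, and it is $(D_{r,-}')$ with $r>1$, not $(D_{1,-}')$, that yields the $M_{r'}$-type bound on the far part. Finally, the regime $h\geq -t/2$, where $(t+h,t+2h)$ escapes $\R_-$, needs separate treatment via $L^q$-boundedness with $y=0$, which you also do not address.
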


The proof is standard and reproduces essentially ideas which have been presented in \cite{BeCaPa62} and \cite{RuRuTo86}. We give the details for the convenience of the reader; cf. also the approach based on Kolmogorov's inequality in \cite[Theorem 3]{LoRiTo05} adapted from \cite{AlPe94}.

\begin{proof} 
According to the definition, $T$ is bounded from $L^p(\R_+; \X)$ into $L^p (\R_+;\Y)$ for some $p\in (1,\infty)$. Similarly as in \cite{BeCaPa62}, one first shows $T$ is of weak type $(1,1)$. This part of the proof uses the assumption that the kernel satisfies the $(D_{1,-})$ condition, and will not be repeated here. By the Marcinkiewicz interpolation theorem, $T$ extends to a bounded linear operator from $L^q(\R_+;\X)$ into $L^q(\R_+;\Y)$ for every $q\in(1,p]$. 

To prove \eqref{point.est} we proceed as follows. 
Fix $f\in L_c^\infty(\R_-; \X)$ and $t<0$.
One can easily check that 
\[
M^{+,\sharp}_- (|Tf|_\Y) (t) = M^{+,\sharp} (|Tf|_\Y)(t)\leq 4 \, \sup_{h>0}\inf_{y\in \Y}\frac{1}{h} 
\int_{t}^{t+h} |Tf(t) - y|_\Y \, \ud t. 
\]
Therefore, it is sufficient to show that there exists a constant $C$ such that for every $h>0$ and some $y\in\Y$ we have 
\begin{equation}\label{lem3inq4}
 \frac{1}{h} 
\int_{t}^{t+h} |Tf(t) - y|_\Y \, \ud t\leq C \, \left(M^+_-(|f|_\X^{r'})(t)\right)^{1/r'}.
\end{equation}
For $0<h <-t/2$, set $f_1=f\chi_{(t,t +2h)}$, $f_2:=f\chi_{[t+2h, 0]}$, and $y:=Tf_2(t)$.
Note that, for every $\tau \in(t, t+h)$,
\begin{align*}
& T(f\chi_{(-\infty, t]})(\tau) = 0 \; \text{ and } \;    
 Tf_2(\tau) - y = \int_{t+2h}^0 \left( K(t',s) - K(t,s) f(s)  \right)\ud s .
\end{align*} 
Consequently,  we obtain
\[
 \frac{1}{h} 
\int_{t}^{t+h} |Tf(\tau)- y|_\Y \,\ud \tau\\
\leq \frac{1}{h}\int_{t}^{t+h} |Tf_1(\tau)|_\Y \,\ud \tau 
+ \sup_{\tau\in (t, t+h)}  | Tf_2(\tau) - y|_\Y
\]
Since $T$ is bounded from $L^q(\R_-; \X)$ into $L^q(\R_-;\Y)$ for some $q<r'$, by H\"older's inequality, we get
\[
\frac{1}{h}\int_{t}^{t+h} |Tf_1(\tau)|_\Y \ud \tau \leq |T|_{\cL(L^q)} \left(M^+_-(|f|_\X^{r'})(t)\right)^{1/r'}. 
\]
For the second summand, since $[t+2h,0] \subseteq \bigcup_{m\in \N} J_m(t,\tau - t)$, by the $(D_{r,-}')$ condition, we easily obtain that
\begin{align*}
& | Tf_2(\tau) - y|_\Y \leq \int_{t +2h}^0 | K(\tau ,s) - K(t,s)|_{\cL(\X ,\Y)}\, |f(s)|_\X \;\ud s\\
& \leq  \sum_{k\in \N} \left( \int_{J_k(t,\tau - t)} | K(\tau,s) - K(t,s)|_{\cL(\X,\Y)}^r \; \ud s\right)^{1/r}\left( \int_{J_k(t,\tau -t )}  |f(s)|_\X^{r'} \,\ud s\right)^{1/r'} \\
& \leq 2^{1/r'}[K]_{D_{r,-}'} \left(M^+_-(|f|_\X^{r'})(t)\right)^{1/r'}
\end{align*} for every $\tau\in (t, t +h)$.

For $h\geq -t/2$, applying the boundedness of $T$ on $L^q(\R_-;\X)$ for some $q<r'$, 
one can easily get \eqref{lem3inq4} with $y=0$.
Thus, this completes the proof.
\end{proof}

\begin{proof}[Proof of Theorem \ref{thm.exsio}]
Combining Lemmas \ref{FSineq} and \ref{lem.point} with Lebesgue's differentiation theorem we can now proceed as follows:
\begin{align*}
\int_{\R_-} |Tf|_\Y^p w\;\ud t &\leq \int_{\R_-} \left(M^{+}_-(|Tf|_\Y)\right)^{p} w\,\ud t \\
& \leq C \, \int_{\R_-} \left(M^{+,\sharp}_-(|Tf|_\Y)\right)^{p} w\,\ud t\\
& \leq C \, \int_{\R_-} \left(M^{+}_-(|f|_\X^{r'})\right)^{p/r'} w\,\ud t.
\end{align*}
The second statement follows simply from the corresponding one of Lemma \ref{FSineq}.
\end{proof}

Note that a simple change of variables gives the following equivalent formulation of Theorem \ref{thm.exsio}, which is a starting point for our further consideration in the next sections.  
The corresponding symmetric conditions to $(D_{r,-})$ and $(D_{r,-}')$ $(r\in [1,\infty])$ one can explicitly express as follows: 
\[
[K]_{D_{r,+}}:=\sup_{s>0, h>0}h^{\frac{1}{r'}}
\sum_{m=1}^\infty 2^{\frac{m}{r'}} \left(\int_{I^+_m(s,h)} 
| K(t,s) - K(t,s+h) |_{\cL(\X ,\Y )}^{r} \ud t\right)^{\frac{1}{r}}<\infty, \tag{$D_{r,+}$}
\]

\[
[K]_{D_{r,+}'}:=\sup_{t>0, h>0}h^{\frac{1}{r'}}
\sum_{m=1}^\infty 2^{\frac{m}{r'}} \left(\int_{J^+_m(t,h)} 
| K(t,s) - K(t-h,s) |_{\cL(\X ,\Y )}^{r} \ud s\right)^{\frac{1}{r}}<\infty, \tag{$D_{r,+}'$}
\]
where
\begin{align*}
 I^+_m(s,h) &:=\{ t\in \R_+: 2^m h < t-s \leq 2^{m+1} h\}, \quad \textrm{and} \\
J^+_m(t,h) &:=\{ s\in \R_+: 2^m h < t-s \leq 2^{m+1} h\} \quad (m\in\N). 
\end{align*}

\begin{theorem} \label{exsio+}
 Let $T$ be a singular integral operator associated with kernel $K$ 
 supported in $\{(t,s)\in\R_+\times\R_+ : t>s \}$ and satisfying the conditions $(D_{1,+})$ and $(D_{r,+}')$ for some $1 < r < \infty$. Then, for every $0<p<\infty$ and for every weight $w\in A^-_\infty(\R_+)$, there exists a constant $C=C(p, w, T, [K]_{D_{1,+}}, [K]_{D_{r, +}'})$ such that 
\begin{equation}\label{r+}
\int_{\R_+} |Tf|_\Y^p w\; \ud t \leq C \int_{\R_+} \left(M^-_+(|f|_\X^{r'})\right)^{p/r'} w \;\ud t
\end{equation}
for every $f\in L^\infty_c(\R_+; \X)$ with $M^-_+\left( |Tf|_\Y\right)\in L^p_w(\R_+ )$. 

Furthermore, if $\cF\subseteq A^-_p(\R_+)$ $(1<p<\infty)$ with $\sup_{w\in \cF}[w]_{ A^-_p(\R_+)}<\infty$, then the constants $C$ can be chosen such that
\[
\sup_{w\in \cF} C(p, w, T, [K]_{D_{1,+}}, [K]_{D_{r, +}'})< \infty.
\] 
\end{theorem}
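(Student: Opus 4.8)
The plan is to reduce Theorem \ref{exsio+} to Theorem \ref{thm.exsio} by the change of variables $t \mapsto -t$, which is exactly the device already used repeatedly in Section \ref{sec.Muckenhoupt} (see the equalities (\ref{M-red.step}) and the definitions in Definition \ref{def.weight}(d)). Concretely, given a singular integral operator $T$ on $L^p(\R_+;\X) \to L^p(\R_+;\Y)$ with kernel $K$ supported in $\{(t,s)\in\R_+\times\R_+ : t>s\}$, I would define the reflected operator $\widetilde{T}$ acting on $L^p(\R_-;\X)$ by $\widetilde{T}g(t) := T(g(-\cdot))(-t)$ for $t\in\R_-$, and observe that $\widetilde{T}$ is a singular integral operator in the sense of Section \ref{sec.Coifman} with kernel $\widetilde{K}(t,s) := K(-t,-s)$, which is supported in $\{(t,s)\in\R_-\times\R_- : t<s\}$.

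The first technical step is to check that the reflection interchanges the relevant kernel conditions: $\widetilde{K}$ satisfies $(D_{1,-})$ if and only if $K$ satisfies $(D_{1,+})$, and $\widetilde{K}$ satisfies $(D_{r,-}')$ if and only if $K$ satisfies $(D_{r,+}')$, with equal constants $[\widetilde{K}]_{D_{1,-}} = [K]_{D_{1,+}}$ and $[\widetilde{K}]_{D_{r,-}'} = [K]_{D_{r,+}'}$. This is a routine substitution in the defining sums and integrals, using that the reflection sends $I^+_m(s,h)$ to $I_m(-s,h)$ and $J^+_m(t,h)$ to $J_m(-t,h)$, and that $K(t,s) - K(t,s+h)$ becomes $\widetilde{K}(t',s') - \widetilde{K}(t',s'-h)$ after the change of variables, and similarly for the adjoint condition. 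The second bookkeeping point is the maximal operator: by (\ref{M-red.step}) we have $M^+_-(g)(t) = M^-_+(g(-\cdot))(-t)$ for $g\in L^1_{loc}(\R_-)$, so applying this with $g = |f(-\cdot)|_\X^{r'}$ shows that $\bigl(M^+_-(|\widetilde{f}|_\X^{r'})\bigr)(t) = \bigl(M^-_+(|f|_\X^{r'})\bigr)(-t)$ where $\widetilde{f} := f(-\cdot)$; likewise $M^+_-(|\widetilde{T}\widetilde{f}|_\Y)(t) = M^-_+(|Tf|_\Y)(-t)$. Finally, by Definition \ref{def.weight}(d), $w\in A^-_\infty(\R_+)$ is equivalent to $\widetilde{w} := w(-\cdot) \in A^+_\infty(\R_-)$, and $w\in A^-_p(\R_+)$ is equivalent to $\widetilde{w}\in A^+_p(\R_-)$ with $[w]_{A^-_p(\R_+)} = [\widetilde{w}]_{A^+_p(\R_-)}$.

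Putting these pieces together: given $f\in L^\infty_c(\R_+;\X)$ with $M^-_+(|Tf|_\Y)\in L^p_w(\R_+)$, the function $\widetilde{f}\in L^\infty_c(\R_-;\X)$ satisfies $M^+_-(|\widetilde{T}\widetilde{f}|_\Y)\in L^p_{\widetilde{w}}(\R_-)$ by the substitution above, so Theorem \ref{thm.exsio} applies to $\widetilde{T}$, $\widetilde{f}$ and $\widetilde{w}$ and gives
\[
\int_{\R_-} |\widetilde{T}\widetilde{f}|_\Y^p \,\widetilde{w}\;\ud t \leq C\!\left(p,\widetilde{w},\widetilde{T},[\widetilde{K}]_{D_{1,-}},[\widetilde{K}]_{D_{r,-}'}\right) \int_{\R_-} \bigl(M^+_-(|\widetilde{f}|_\X^{r'})\bigr)^{p/r'}\widetilde{w}\;\ud t .
\]
Undoing the change of variables $t\mapsto -t$ in both integrals turns this into (\ref{r+}) with $C(p,w,T,[K]_{D_{1,+}},[K]_{D_{r,+}'}) := C(p,\widetilde{w},\widetilde{T},[\widetilde{K}]_{D_{1,-}},[\widetilde{K}]_{D_{r,-}'})$, and the norm of $\widetilde T$ on $L^p(\R_-;\cdot)$ equals that of $T$ on $L^p(\R_+;\cdot)$. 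The uniformity statement for families $\cF\subseteq A^-_p(\R_+)$ with $\sup_{w\in\cF}[w]_{A^-_p(\R_+)}<\infty$ follows immediately because $\widetilde{\cF} := \{w(-\cdot) : w\in\cF\} \subseteq A^+_p(\R_-)$ has $\sup_{\widetilde w\in\widetilde\cF}[\widetilde w]_{A^+_p(\R_-)}<\infty$, so the second part of Theorem \ref{thm.exsio} applies. I do not expect a genuine obstacle here; the only thing requiring care is the verification that the reflection correctly matches $(D_{1,\pm})$, $(D_{r,\pm}')$ and the support conditions $\{t<s\}$ versus $\{t>s\}$, which is mechanical but must be done with the index sets $I_m, J_m$ written out explicitly.
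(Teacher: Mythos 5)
Your proposal is correct and is exactly the paper's route: the paper states that Theorem \ref{exsio+} is obtained from Theorem \ref{thm.exsio} by ``a simple change of variables,'' and you have simply carried out that reflection $t\mapsto -t$ in full detail, checking the transformation of the kernel conditions, the maximal operators, and the weight classes.
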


\section{Rearrangement invariant Banach function spaces} \label{sec.r.i.extrap}

In this section we apply techniques from interpolation and extrapolation theory to establish further boundedness properties of the operators discussed in Sections \ref{sec.Muckenhoupt} and
\ref{sec.Coifman}.

The first result provides a counterpart of the Lorentz-Shimogaki theorem for the operators $M^\pm_-$ and $M^\pm_+$; see Proposition \ref{LSanalogue}. Next, applying techniques of Rubio de Francia's extrapolation theory, we show that for singular integral operators satisfying the assumptions of Theorem \ref{exsio+} for every $r\in(1,\infty)$ an analogue of the classical Boyd theorem holds; see Theorem \ref{Boyd th}. We restrict our considerations to the case of the positive half-line $\R_+$, but corresponding results hold in the case of $\R_-$. We start with some preparation. \\

Throughout, let $\E$ be a rearrangement invariant Banach function space over $(\R_+, \ud t)$.
Denote by $\cM^+(\R_+)$ the set of all nonnegative measurable functions on $\R_+$. Let $w$ be a weight on $\R_+$ which is locally integrable on $(0, \infty)$.
Define $\rho_w:\cM^+(\R_+)\rightarrow [0,\infty]$ by $\rho_w(f) := \|f^*_w\|_{\E}$, where $f^*_w$ denotes the decreasing rearrangement of $f$ with respect to $w\,\ud t$. By \cite[Theorem 4.9, p.\,61]{BeSh88}, $\rho_w$ is a rearrangement invariant Banach function norm with respect to $(\R_+, w\,\ud t)$. Write $\E_w$ for the rearrangement invariant Banach function space corresponding to $\rho_w$, and also $\|\cdot\|_{\E_w}$ for the norm of $\E_w$.
Note that $L^p$-spaces with respect to the weighted Lebesgue measure $w\; \ud t$ (which have been denoted by $L^p_w$ up to now) and the $L^p_w$-spaces as defined in this paragraph coincide so that there is no danger of ambiguity in our notation.

Following \cite{LiTz79}, we define 
the \it {lower} \rm and \it {upper Boyd indices} \rm respectively by 
\begin{align*}
p_\E & = \lim_{t\to \infty} \frac{\log t}{\log h_\E(t)} = \sup_{1<t<\infty} \frac{\log t}{\log h_\E(t)} \quad \text{ and} \\
q_\E & = \lim_{t\to 0+} \frac{\log t}{\log h_\E(t)} = \inf_{0<t<1} \frac{\log t}{\log h_\E(t)},
\end{align*}
where $h_{\E}(t)=\|D_t\|_{\cL (\E)}$ and 
$D_t: \E\rightarrow \E$ $(t>0)$ is the \it {dilation operator} \rm defined by 
\[
D_tf(s)=f(s/t), \qquad (0<t<\infty, \, f\in \E ).
\]
One always has $1\le p_\E\le q_\E\le\infty$, see for example \cite[Proposition 5.13, p.\,149]{BeSh88}, where the Boyd indices are defined as the reciprocals with respect to our definitions. In particular, we have $p_\E=q_\E=p$ for $\E=L^{p,q}$ ($1<p<\infty$, $1\leq q\leq \infty$).

\begin{proposition}\label{LSanalogue}
 Let $\E$ be any rearrangement invariant Banach function space over $(\R_+, \ud t)$ with 
Boyd indices $p_\E, q_\E\in (1, \infty)$. Then,  for every weight $w\in A^\pm_{p_\E}(\R_+)$ the operator $M^\pm_+$ is bounded on $\E_w$. 
\end{proposition}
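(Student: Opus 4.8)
The plan is to follow the classical route to the Lorentz--Shimogaki / Boyd theorem, adapted to the one-sided setting. First I would reduce, via the symmetry relation~\eqref{M-red.step} (i.e. $M^-_+f(t)=M^+_-(f(-\cdot))(-t)$ and the corresponding identification of the weight classes), to treating only one of the two operators, say $M^+_+$ and weights $w\in A^+_{p_\E}(\R_+)$; the other case is literally the same statement transported by the reflection $t\mapsto -t$. So from now on the goal is: if $p_\E\in(1,\infty)$ and $w\in A^+_{p_\E}(\R_+)$, then $M^+_+$ is bounded on $\E_w$.

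The key step is an endpoint pair of weighted weak-type estimates together with a Calder\'on--Mityagin type interpolation argument on the rearrangement invariant space $\E_w$. Concretely, since $p_\E\in(1,\infty)$, the openness property (Corollary~\ref{unif bound}, applied to the single weight $w$, or Lemma~\ref{lem.openness}) gives some $p_0<p_\E$ with $w\in A^+_{p_0}(\R_+)$; I would also pick $p_1>q_\E$ arbitrary, noting $M^+_+$ is trivially bounded on $L^\infty_w(\R_+)$. By Lemma~\ref{lem.weak-estim}, $M^+_+$ is of weak type $(p_0,p_0)$ with respect to $w\,\ud t$, and it is bounded (hence of weak type) on $L^{p_1}_w(\R_+)$ as well — or one can simply use the $L^\infty_w$ endpoint. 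The crucial point is that $M^+_+$ satisfies, with respect to the measure $w\,\ud t$, the hypotheses of the Calder\'on--Mityagin theorem on rearrangement invariant spaces: for nonnegative $f$ and a suitable majorisation, the decreasing rearrangement of $M^+_+f$ with respect to $w\,\ud t$ is dominated by an averaging (Hardy-type) operator applied to the decreasing rearrangement of $f$. Then boundedness of $M^+_+$ on $\E_w$ follows from the boundedness on $\E_w$ of the relevant Hardy operators $P$ and $Q$, which in turn holds precisely because the Boyd indices of $\E_w$ lie strictly between $1$ and $\infty$ — and, by \cite[Theorem~4.9, p.\,61]{BeSh88}, $\E_w$ is a rearrangement invariant Banach function space over $(\R_+,w\,\ud t)$ with the same Boyd indices $p_\E,q_\E$ as $\E$, so the hypothesis $p_\E\in(1,\infty)$ (which already forces $q_\E<\infty$ is \emph{not} automatic — see below) is what powers the argument.

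Technically the cleanest packaging is: (i) establish the two weighted weak-type endpoints for $M^+_+$ as above; (ii) invoke the standard interpolation theorem for rearrangement invariant spaces with nontrivial Boyd indices — an operator of weak types $(p_0,p_0)$ and $(p_1,p_1)$ (or $(p_0,p_0)$ and $(\infty,\infty)$) with $p_0<p_\E\le q_\E<p_1$ is bounded on $\E_w$ — which is the abstract form of the Lorentz--Shimogaki/Boyd theorem and is available over any $\sigma$-finite measure space, here $(\R_+,w\,\ud t)$; see for instance the versions in Bennett--Sharpley. The only genuine subtlety, and the one I expect to be the main obstacle, is the hypothesis: the statement assumes \emph{both} $p_\E$ and $q_\E$ are finite and $>1$, but for the endpoint at infinity one only needs $q_\E<\infty$ together with a weight in $A^+_{q_\E+\vep}$; since $A^+_p(\R_+)$ classes increase with $p$, $w\in A^+_{p_\E}(\R_+)\subseteq A^+_{p_1}(\R_+)$ for $p_1>p_\E$, so this is fine. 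Thus the real work is (a) checking that the one-sided maximal operator $M^+_+$ admits the Calder\'on--Mityagin majorisation for its rearrangement with respect to $w\,\ud t$ — this uses the weak-type $(p_0,p_0)$ estimate and a Calder\'on--Zygmund/rising-sun decomposition of $\{M^+_+f>\lambda\}$ exactly as in the proof of Lemma~\ref{lem.weak-estim}, localised on the intervals $I_j$ — and (b) matching the openness constant $p_0$ against $p_\E$, which is exactly Corollary~\ref{unif bound}. Once these are in place, the boundedness of $M^+_+$ on $\E_w$ drops out of the interpolation theorem, and the reflection argument~\eqref{M-red.step} delivers the $\R_-$ and $M^-$ cases.
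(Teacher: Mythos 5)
Your proposal follows essentially the same route as the paper: reduce by reflection to one operator, use openness (Lemma~\ref{lem.openness}) to find $r<p_\E$ with $w\in A^+_r(\R_+)$, pick $q\in(q_\E,\infty)$, observe that $M^+_+$ is bounded on $L^r_w$ and $L^q_w$ (Theorem~\ref{thm.restSawyer}), deduce the Calder\'on-operator domination $(M^+_+f)^*_w\leq C\,S_\sigma(f^*_w)$, and conclude by Boyd's theorem. Two small points of imprecision worth flagging: first, the Calder\'on/Hardy operators act on the decreasing rearrangement $f^*_w$, a function on $(\R_+,\ud t)$, so one needs $S_\sigma$ bounded on $\E$ itself (Boyd applied to $\E$), not "boundedness on $\E_w$ of $P$ and $Q$" as you wrote — though this is morally the same thing since $\E$ is the Luxemburg representation of $\E_w$; second, the paper's proof quietly uses the embedding $\E_w\subseteq L^{r,1}_w+L^{q,1}_w(\R_+)$ (Lemma~\ref{ing1}) to ensure that the joint weak type / $S_\sigma$ domination actually applies to all $f\in\E_w$, a technical ingredient your sketch does not address.
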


\begin{proof} 
 By Theorem \ref{thm.restSawyer}, the operator $M^\pm_+$ is bounded on $L^p_w(\R_+ )$ for every $p\in (1,\infty)$ and every $w\in A_p^\pm(\R_+)$.
 Fix $w \in A_{p_\E}^\pm(\R_+)$, and $q \in (q_\E,\infty)$. By the openness property of the $A^\pm_p(\R_+)$ classes, Lemma \ref{lem.openness}, there exists $r\in (1,p_\E)$ such that $w\in A_r^\pm(\R_+)$. In particular, by \cite[Theorem 4.11, p.\,223]{BeSh88} (see also \cite[Theorem 8]{Ca66}), $M^\pm_+$ is of joint weak type $(r,r;q,q)$ with respect to $(\R_+, w\,\ud t)$. More precisely, according to \cite[Definition 5.4, p.\,143]{BeSh88}, for every $f\in L^{r,1}_w+L^{q, 1}_w(\R_+ )$ and every $t>0$
\[ 
(M^\pm_+f)^*_w(t) \leq C S_\sigma (f^*_w)(t) ,
\]
 where $S_\sigma$ stands for the corresponding Calder\'on operator with $\sigma:=(r^{-1},r^{-1},q^{-1},q^{-1})$.
 By Boyd's theorem \cite[Theorem 5.16, p.\,153]{BeSh88}, $S_\sigma$ is bounded on $\E$. Therefore, we obtain
\[ 
\|M^\pm_+f\|_{\E_w} =\|(M^\pm_+f)^*_w\|_{\E}\leq C \|S_\sigma f^*_w\|_{\E}\leq 
 C\|S_\sigma\|_{\cL(\E)}  \| f\|_{\E_w}
\]
 for every $f\in L^{r,1}_w+L^{q, 1}_w(\R_+ ) $. Since $\E_w\subseteq L^{r,1}_w +L^{q, 1}_w(\R_+ )$, see Lemma \ref{ing1} below, the proof is complete. 
\end{proof}

Recall that if $\E$ is a rearrangement invariant Banach function space over $(\R_+, \ud t)$ and $1\leq p<p_\E$, $q_\E<q\leq \infty$, then 
\[
L^p\cap L^q(\R_+) \subseteq \E \subseteq L^p+L^q(\R_+) ; 
\]
see, for example, \cite[Proposition 2.b.3]{LiTz79}. For any weight $w$, the spaces $L^p_w\cap L^q_w(\R_+ )$ and $L^p_w+L^q_w(\R_+ )$ are endowed with the norms
\begin{align*}
 L^p_w\cap L^q_w(\R_+ )\ni\; & f\mapsto  \max(\|f\|_{L^p_w}, \|f\|_{L^q_w}) \quad  \textrm{ and}\\ 
 L^p_w + L^q_w(\R_+ )\ni\;  & f \mapsto \inf\left\{ \|g\|_{L^p_w} + \|h\|_{L^q_w}: g\in L^p_w, h\in L^q_w, g+h=f \right\},
\end{align*} respectively.
Recall also that, by Lemma \ref{lem.openness}, 
\[
q_w:= \inf\{ q\in [1,p]:  w\in A_q^-(\R_+) \}<p\quad  \textrm{ for every } w\in A_p^-(\R_+)\;  (p>1).
\]

\begin{lemma} \label{ing1} 
Let $\E$ be a rearrangement invariant Banach function space over $(\R_+, \ud t)$ with Boyd indices $p_\E, q_\E \in(1,\infty)$. Then, the following statements hold. 
\begin{itemize}
 \item [(i)] Let $w$ be any weight on $\R_+$ which is locally integrable on $(0,\infty)$. Then, for every $ p\in [1,p_\E)$ and $q\in (q_\E , \infty]$ we have: 
 \begin{equation}\label{embed}
  L^p_w \cap L^q_w(\R_+ ) \subseteq \E_w \subseteq L^p_w+L^q_w (\R_+ ) ,
 \end{equation}
 and the embeddings are continuous.
 \item [(ii)] For every $w\in A^-_{p_\E}(\R_+)$ and for every $r\in [1,p_\E/q_w)$, we have 
\begin{equation*}
 \E_w  \subseteq L^r_{loc}(\R_+)
\end{equation*}
and the embedding is continuous. 
\end{itemize}
\end{lemma}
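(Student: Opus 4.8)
The plan is to derive both parts from standard facts about rearrangement invariant (r.i.) Banach function spaces together with the weighted embeddings $L^p\cap L^q \subseteq \E \subseteq L^p+L^q$ transported to the weighted measure $w\,\ud t$ via the correspondence $\E \leftrightarrow \E_w$, $f \mapsto f^*_w$. For part~(i), the key observation is that the r.i.\ norm $\rho_w$ on $\cM^+(\R_+)$ relative to the measure $w\,\ud t$ has the \emph{same} fundamental function and the \emph{same} Boyd indices as $\E$ relative to Lebesgue measure, since $\rho_w(f)=\|f^*_w\|_\E$ depends on $f$ only through its distribution function with respect to $w\,\ud t$; in particular $h_{\E_w}=h_\E$ and hence $p_{\E_w}=p_\E$, $q_{\E_w}=q_\E$. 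Therefore, applying the unweighted embedding result \cite[Proposition 2.b.3]{LiTz79} (cited above) to the r.i.\ space $\E_w$ over the measure space $(\R_+, w\,\ud t)$ yields, for every $p\in[1,p_\E)$ and $q\in(q_\E,\infty]$,
\[
L^p(\R_+, w\,\ud t)\cap L^q(\R_+, w\,\ud t) \subseteq \E_w \subseteq L^p(\R_+, w\,\ud t)+L^q(\R_+, w\,\ud t),
\]
with continuous inclusions, which is exactly \eqref{embed} once one recalls (as noted just before Lemma~\ref{ing1}) that $L^p$ with respect to $w\,\ud t$ coincides with the space $L^p_w$ and carries the stated intersection/sum norm. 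The only point requiring a line of justification is that the abstract result of \cite{LiTz79} is stated over $(\R_+,\ud t)$ but applies verbatim over any nonatomic $\sigma$-finite measure space of infinite measure; here $\int_0^\infty w\,\ud t=\infty$ since $w>0$ a.e.\ and $w$ is locally integrable but a weight, so $(\R_+, w\,\ud t)$ is indeed nonatomic of infinite total mass, and the argument in \cite{LiTz79} (which only uses the fundamental function and the Boyd indices) goes through.

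For part~(ii), fix $w\in A^-_{p_\E}(\R_+)$ and $r\in[1, p_\E/q_w)$. By the definition of $q_w$ and Lemma~\ref{lem.openness}, choose $q\in(q_w,p_\E)$ with $rq'<\infty$ small enough that $rq_w/(q-\text{something})<1$; more precisely, pick $q\in(q_w,p_\E)$ such that still $r<p_\E/q$ — this is possible because $r<p_\E/q_w$ and $q\mapsto p_\E/q$ is continuous — and such that $w\in A^-_q(\R_+)$. Apply part~(i) with this $q$ and with, say, $p=q$ (any admissible $p<p_\E$ works; what we actually need is the \emph{left} inclusion is not used here, only that $\E_w\subseteq L^q_w + L^{q'}_w$ for suitable exponents — in fact it is cleanest to use the right inclusion $\E_w\subseteq L^{q_1}_w+L^{q_2}_w$ with $q_1<p_\E$, $q_2>q_\E$). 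Then for a compact $I\subseteq(0,\infty)$ and $f\in\E_w$, write $f=g+h$ with $g\in L^{q_1}_w$, $h\in L^{q_2}_w$; it suffices to bound $\|f\chi_I\|_{L^r(I,\ud t)}$. On a fixed compact set the weighted and unweighted $L^s$ norms are comparable as soon as $w$ and $w^{-1}$ are locally integrable on $(0,\infty)$, which holds because $w\in A^-_q(\R_+)$ forces $w^{1-q'}\in L^1_{loc}(0,\infty)$ (this is immediate from the $A^-_q$ expression, cf.\ the remark after Definition~\ref{def.weight}); combined with Hölder's inequality on $I$ (finite measure) to lower the exponent to $r$, one gets $\|f\chi_I\|_{L^r(I)}\le C_I(\|f\|_{L^{q_1}_w}+\|f\|_{L^{q_2}_w})\le C_I'\|f\|_{\E_w}$, which is the claimed continuous embedding. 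The role of the precise bound $r<p_\E/q_w$ (rather than just $r<p_\E$) is exactly to leave room for the openness step: one needs $q$ strictly above $q_w$ so that $w\in A^-_q$, while still keeping $q<p_\E$, and then the Hölder loss in the exponent when comparing $L^q_w$ to $L^r_{loc}$ is absorbed.

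I expect the main obstacle to be purely bookkeeping: making sure the chain of exponent choices in part~(ii) is internally consistent — one must simultaneously have $q>q_w$ (to invoke $w\in A^-_q$ and hence $w^{1-q'}\in L^1_{loc}$), $q<p_\E$ (to apply part~(i)), and the Hölder exponent relation on a compact set compatible with $r<p_\E/q_w$. The r.i.-space transfer in part~(i) is essentially automatic once one records that $\rho_w$ has the same dilation function $h_\E$ as $\E$; the only subtlety worth a sentence is confirming that $(\R_+, w\,\ud t)$ is a nonatomic measure space of infinite measure so that the cited abstract embedding applies without change. Everything else is Hölder on sets of finite measure plus the local integrability of $w$ and $w^{1-q'}$.
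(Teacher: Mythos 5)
Your plan for part (i) — declare $\E_w$ an r.i.\ space over $(\R_+, w\,\ud t)$ with the same Boyd indices as $\E$ and then invoke the abstract embedding — is a genuinely different route from the paper's, which adapts the argument of \cite[Proposition 2.b.3]{LiTz79} directly: it estimates $\|g\|_{\E_w}$ for a dyadic simple minorant $g=\sum 2^k\chi_{A_k}$ of $f$ via $\|D_{w(A_k)}\chi_{[0,1]}\|_\E$, splits the index set according to $w(A_k)\gtrless 1$, sums the two geometric series using the dilation bounds \eqref{dilation}, and then obtains the second inclusion by dualizing with $\bX=\E'$. Your shortcut has a concrete gap: you justify the transfer by asserting that $(\R_+, w\,\ud t)$ is of infinite total mass ``since $w>0$ a.e.\ and $w$ is a weight,'' but the paper's definition of a weight imposes only positivity a.e.\ and local integrability, so $w(t)=e^{-t}$ is admissible and gives $\int_0^\infty w\,\ud t<\infty$. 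When $W:=\int_0^\infty w\,\ud t$ is finite, the Luxemburg representation space of $\E_w$ lives on $(0,W)$, not $(0,\infty)$, so the assertion $h_{\E_w}=h_\E$ (hence equality of Boyd indices) and the applicability of the $(0,\infty)$ version of \cite{LiTz79} are no longer ``essentially automatic'' and would need a separate argument. The paper's direct computation sidesteps this entirely, because the dilation bounds \eqref{dilation} on $\E$ are used for arbitrary $u\in(0,\infty)$, finite mass or not.

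For part (ii) your approach is close to the paper's: the paper sets $r=p/s$ with $q_w<s<p<p_\E$, uses (i) to get $\E_w\subseteq L^p_{w,\mathrm{loc}}$, and applies H\"older on $(0,a)$ in the form
\[
\int_0^a |f|^r\;\ud t\ \le\ \Bigl(\int_0^a |f|^p w\;\ud t\Bigr)^{1/s}\Bigl(\int_0^a w^{1-s'}\;\ud t\Bigr)^{1/s'},
\]
where $w^{1-s'}\in L^1_{\mathrm{loc}}$ because $w\in A_s^-(\R_+)$. Your version decomposes $f=g+h$ and estimates both pieces separately, which works after the exponent bookkeeping you gesture at, but two points should be cleaned up: the inequality $\|f\chi_I\|_{L^r}\le C_I(\|f\|_{L^{q_1}_w}+\|f\|_{L^{q_2}_w})$ is not literally meaningful for a general $f\in\E_w$ (take the infimum over decompositions, i.e., the $L^{q_1}_w+L^{q_2}_w$ norm), and the sentence claiming that ``weighted and unweighted $L^s$ norms are comparable on a compact set when $w$ and $w^{-1}$ are locally integrable'' is not correct as stated — one does not compare norms at equal exponents, one lowers the exponent via H\"older at the cost of integrating a negative power of $w$, which is exactly what the $A^-_s$ condition provides.
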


\begin{proof} 
(i) We follow the idea of the proof of \cite[Proposition 2.b.3]{LiTz79}.
 Let $f\in L^p_w \cap L^q_w(\R_+ )$ be a simple function with $\|f\|_{L^p_w},
 \|f\|_{L^q_w}\leq 1$. 
Let $g$ be a simple function such that $g(t)=\sum_{k\in I} 2^k \chi_{A_k}$, and 
$f(t)/2\leq g(t)\leq f(t)$ ($t>0$), where $I$ is a subset of $\mathbb{Z}$. In particular,  since $w(A_k)<\infty$, $\chi_{A_k}\in \E_w$ ($k\in I$). 
Hence, we obtain that 
\begin{align*}
 \|g\|_{\E_w} & \leq \sum_{k\in I} 2^{k}\left \|(\chi_{A_k})^*_w \right\|_{ \E} \\
 & = \sum_{k\in I} 2^{k}\left \|\chi_{[0,w(A_k)]} \right\|_{ \E} \\
 & = \sum_{k\in I} 2^{k}\left \|D_{w(A_k)}\chi_{[0,1]} \right\|_{ \E}. 
\end{align*}
Fix $q\in (q_\E, \infty)$. By definition of the Boyd indices, for every $r\in (1,p_\E )$ and $s\in (q_\E ,\infty )$ there exists a constant $C$ such that 
\begin{equation}\label{dilation}
\begin{split}
& \|D_u\|_{\cL( \E)}\leq C \, u^{1/r} \text{ for every } u\in [1,\infty ), \textrm{ and }  \\
& \|D_u\|_{\cL(\E)} \leq  C \, u^{1/s}\text{ for every } u\in (0, 1).
\end{split}
\end{equation}
Note that $w(A_k)\leq \min(2^{-kp}, 2^{-kq})$ ($k\in I$). Set 
\[
I_1:=\{ k\in I: w(A_k)\geq 1\} \quad \textrm{and}\quad I_2:=\{ k\in I: w(A_k)\leq 1\}. 
\] 
Fix $r$ and $s_i$ $(i=1,2)$ such that $p<r<p_\E$ and $q_\E< s_1 < q< s_2$. Then, by 
(\ref{dilation}), there exists a constant $C$ such that 
\begin{align*}
\|D_{w(A_k)}\|_{\cL( \E)} & \leq C \, w(A_k)^{1/r} \; \quad (k\in I_1), \text{ and} \\ 
\|D_{w(A_k)}\|_{\cL( \E)} & \leq  C \, w(A_k)^{1/s_i}\; \quad (k\in I_2, \, i=1,2).
\end{align*}
Since $I_1\subseteq \ZZ_-$, 
\[
\sum_{k\in I_1} 2^k \|D_{w(A_k)}\|_{\cL( \E)} \leq C \sum_{k\in I_1} 2^k w(A_k)^{1/r}\leq C \sum_{k\in I_1} 2^k 2^{-k\frac{p}{r}}<\infty.
\]
Furthermore, 
\begin{align*}
 \sum_{k\in I_2} 2^k \|D_{w(A_k)}\|_{\cL(\E)} &\leq C \, \sum_{k\in I_2, k\geq 0} 2^k w(A_k)^{1/s_1} + C \sum_{k\in I_2, k< 0} 2^k w(A_k)^{1/s_2} \\
 &\leq C \, \sum_{k\in I_2, k\geq 0} 2^k 2^{-k\frac{q}{s_1}} + C \sum_{k\in I_2, k< 0} 2^k 2^{-k\frac{q}{s_2}}<\infty.
\end{align*}

To show the second inclusion in (\ref{embed})  recall first that by applying Luxemburg's representation theorem, \cite[Theorem 4.10, p.\,62]{BeSh88} one can show that 
\[
\frac{1}{p_\E} + \frac{1}{q_{\E'}} =1 \quad \quad \frac{1}{q_\E} + \frac{1}{p_{\E'}}=1,
\]
where $\E'$ stands for the associated space of $\E$; see \cite[Definition 2.3, p.\,9]{BeSh88}.
Set $\bX:=\E'$. Since $q'<p_\bX$, $q_\bX<p'$, from what has already been proved we get that $L^{q'}_w\cap L^{p'}_w(\R_+ ) \subseteq \bX_w$ continuously. Hence, by the \emph{duality} argument we get that $(\bX_w)'$ is continuously embedded into $L^{p}_w + L^{q}_w(\R_+ )$. 

Note that $(\bX')_w\subseteq (\bX_w)'$. Indeed, since $(\R_+,w\,\ud t)$ is resonant (see \cite[Theorem 2.7, p.\,51]{BeSh88}), by 
\cite[Proposition 4.2]{BeSh88} and Landau's resonance theorem, \cite[Lemma 2.6, p.\,10]{BeSh88}, a function $f\in \cM(\R_+)$ belongs to $(\bX_w)'$ if and only if $\int_0^\infty f^*_w g^*_w \ud t<\infty$ for every $g\in \E_w$. On the other hand, by Luxemburg's representation theorem and Landau's resonance theorem, a function $f\in \cM(\R_+)$ belongs to $(\bX')_w$ if and only if $\int_0^\infty h f_w^* \ud t <\infty$ for every $h\in \E$. Therefore, the claimed embedding holds, and its continuity follows from standard arguments.

Finally, note that by the Lorentz-Luxemburg theorem, \cite[Theorem 2.7, p.\,10]{BeSh88}, $\bX'=(\E')'=\E$. This completes the proof of (i).

(ii) Fix $w\in A^-_{p_\E}(\R_+)$. It is sufficient to prove the claim for $r<p_\E/q_w$ such that $r=p/s$ for $q_w < s < p < p_\E$. Then, by (i), we conclude that $\E_w\subseteq L^p_{w,loc}(\R_+ )$, and by the one-sided Muckenhoupt condition $w^{1-s'}\in L^1_{loc}(\R_+)$. Therefore, for $f\in \E_w$ and $a>0$, H\"older's inequality yields
\begin{align*}
 \left( \int_0^a |f|^p w\; \ud t \right)^{1/s} \left( \int_0^a w^{1-s'}\; \ud t \right)^{1/s'} & = \left( \int_0^a |f|^{rs} w\; \ud t \right)^{1/s} \left( \int_0^a w^{-s'/s}\; \ud t \right)^{1/s'}\\
 & \geq \int_0^a |f|^r\; \ud t.
\end{align*}
\end{proof}

We are now in a position to apply Theorem \ref{LSanalogue} to provide weighted rearrangement invariant inequalities for singular integral operators on the half-line. 

First, as in Curbera, Garc{\'{\i}}a-Cuerva, Martell and P{\'e}rez \cite{CGMP06}, we define the vector-valued version $\E_w(\R_+;\X)$ of the rearrangement invariant Banach function space $\E_w(\R_+)$ in the following way:
\[
\E_w (\R_+;\X ) :=\left\{ f: \R_+\rightarrow \X \;\;{\rm{measurable }}:  |f|_\X \in \E_w (\R_+ ) \right\} ,
\]
and its norm is $\|f\|_{\E_w(\R_+;\X)}: =\| |f|_\X \|_{\E_w}$.

\begin{theorem}\label{Boyd th} 
Let $T$ be a singular integral operator  associated with kernel $K$ supported in $\{(t,s)\in\R_+\times\R_+: t>s\}$ and  satisfying the conditions $(D_{1,+})$ and $(D'_{r,+})$ for every $1 < r < \infty$. 

Then, for every rearrangement invariant Banach function space $\E$ with Boyd indices $p_{\E}$, $q_{\E}\in (1, \infty )$, and for every weight $w\in A^-_{p_\E}(\R_+)$,  $T$ extrapolates to a bounded linear operator from $\E_w(\R_+;\X)$ into $\E_w(\R_+;\Y)$.
\end{theorem}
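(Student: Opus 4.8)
The plan is to combine the Coifman-type inequality from Theorem \ref{exsio+} with Rubio de Francia's extrapolation machinery, working relative to the one-sided maximal operator $M^-_+$ on the half-line and the classes $A^-_p(\R_+)$. First I would fix a rearrangement invariant Banach function space $\E$ with Boyd indices $p_\E,q_\E\in(1,\infty)$ and a weight $w\in A^-_{p_\E}(\R_+)$. By Theorem \ref{exsio+}, for each $r\in(1,\infty)$ and each $v\in A^-_\infty(\R_+)$ we have the pointwise-in-integral bound
\[
\int_{\R_+}|Tf|_\Y^p\,v\,\ud t\le C\int_{\R_+}\bigl(M^-_+(|f|_\X^{r'})\bigr)^{p/r'}v\,\ud t
\]
for $0<p<\infty$, with uniform control of $C$ over families $\cF\subseteq A^-_p(\R_+)$ with $\sup_{w\in\cF}[w]_{A^-_p(\R_+)}<\infty$. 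The strategy is: (1) pick $r$ close to $1$ so that $r'$ is large, but more importantly choose the exponent so that $M^-_+$ applied to $|f|_\X^{r'}$ is controlled; in fact, since $(D'_{r,+})$ holds for \emph{every} $r\in(1,\infty)$, we may take $r$ as large as we like, hence $r'$ as close to $1$ as we like, and then $M^-_+(|f|_\X^{r'})^{1/r'}$ is dominated in any space with Boyd index above $r'$. By Proposition \ref{LSanalogue}, $M^-_+$ is bounded on $\E_w$ whenever $w\in A^-_{p_\E}(\R_+)$; more precisely one needs a version of this boundedness adapted to the composition $g\mapsto (M^-_+(g^{r'}))^{1/r'}$, which amounts to boundedness of $M^-_+$ on the "$r'$-convexification" of $\E_w$, whose Boyd indices are those of $\E$ scaled by $r'$, and remain $>1$ once $r'<p_\E$.

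Next I would set up the Rubio de Francia iteration. Given $\E$ and $w\in A^-_{p_\E}(\R_+)$, by Lemma \ref{lem.openness} there is $r_0<p_\E$ with $w\in A^-_{r_0}(\R_+)$; choose $r\in(1,\infty)$ with $r'<r_0<p_\E$. Work in the $r'$-convexified space $\E^{(r')}_w$, i.e. the space with norm $\|g\|=\||g|^{1/r'}\|_{\E_w}^{r'}$ (equivalently $\E_w=(\E^{(r')}_w)^{r'}$ is $r'$-convex), whose lower Boyd index is $p_\E/r'>1$. The associate space $(\E^{(r')}_w)'$ also has Boyd indices in $(1,\infty)$ by the duality relations recalled before Lemma \ref{ing1}. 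Then I would run the standard dual iteration algorithm: for $0\le h\in(\E^{(r')}_w)'$ define
\[
\cR h:=\sum_{k=0}^\infty \frac{(M^-_+)^k h}{2^k\,\|M^-_+\|^k_{(\E^{(r')}_w)'}},
\]
where $(M^-_+)^k$ is the $k$-fold iterate and $\|M^-_+\|_{(\E^{(r')}_w)'}$ is its operator norm on $(\E^{(r')}_w)'$, finite by Proposition \ref{LSanalogue} applied to the associate space (whose relevant Boyd index exceeds $1$). Then $h\le\cR h$, $\|\cR h\|_{(\E^{(r')}_w)'}\le 2\|h\|_{(\E^{(r')}_w)'}$, and $M^-_+(\cR h)\le 2\|M^-_+\|_{(\E^{(r')}_w)'}\,\cR h$, so $\cR h\,w^{?}$ — more precisely the weight $\cR h\cdot w$ — lies in $A^-_1(\R_+)$ with $[\cR h\cdot w]_{A^-_1(\R_+)}\le 2\|M^-_+\|_{(\E^{(r')}_w)'}$ (using the factorization-type observation from Remark \ref{rem.weights} together with $w\in A^-_{r_0}\subseteq A^-_\infty$, so that the product stays in $A^-_\infty(\R_+)$ with uniformly bounded constant). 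The key point is that the resulting family of weights $\{(\cR h)w:\ 0\le h,\ \|h\|_{(\E^{(r')}_w)'}\le1\}$ has uniformly bounded $A^-_\infty(\R_+)$ (in fact $A^-_p(\R_+)$ for suitable fixed $p$) characteristics, so Theorem \ref{exsio+} gives a single constant $C$ working for all of them.

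To finish, I would estimate by duality in the convexified space:
\[
\||Tf|_\Y^{r'}\|_{\E_w}=\|Tf\|_{\E^{(r')}_w}^{r'}
=\sup_{\|h\|_{(\E^{(r')}_w)'}\le1}\int_{\R_+}|Tf|_\Y^{r'}h\,w\,\ud t
\le\sup_h\int_{\R_+}|Tf|_\Y^{r'}(\cR h)\,w\,\ud t.
\]
Applying Theorem \ref{exsio+} with $p=r'$ and $v=(\cR h)w\in A^-_\infty(\R_+)$ (uniformly), the right side is bounded by $C\sup_h\int_{\R_+}M^-_+(|f|_\X^{r'})\,(\cR h)\,w\,\ud t$; one then uses that $M^-_+(|f|_\X^{r'})\in\E_w$ because $|f|_\X^{r'}\in\E_w$ and $M^-_+$ maps $\E_w$ into itself (this is Proposition \ref{LSanalogue}, now applied directly to $\E$, since $w\in A^-_{p_\E}(\R_+)$), together with Hölder's inequality $\int g\,(\cR h)\,w\le\|g\|_{\E_w}\|\cR h\|_{(\E_w)'}$ and $\|\cR h\|_{(\E_w)'}\le 2\|h\|_{(\E_w)'}$. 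Collecting constants yields $\|Tf\|_{\E_w(\R_+;\Y)}^{r'}\le C'\||f|_\X^{r'}\|_{\E_w}=C'\|f\|_{\E_w(\R_+;\X)}^{r'}$, i.e. $T$ is bounded $\E_w(\R_+;\X)\to\E_w(\R_+;\Y)$; boundedness on a dense set such as $L^\infty_c(\R_+;\X)$ plus Lemma \ref{ing1} (ensuring $\E_w\subseteq L^r_{loc}$, so the singular integral representation makes sense) lets one extend $T$ to all of $\E_w(\R_+;\X)$.

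The main obstacle I anticipate is the bookkeeping that keeps all the weight characteristics uniform through the iteration — one must verify that multiplying an $A^-_1(\R_+)$ weight $\cR h$ by the fixed $A^-_\infty(\R_+)$ weight $w$ produces weights in a single class $A^-_p(\R_+)$ (or at least $A^-_\infty(\R_+)$) with a common bound on the characteristic, so that Theorem \ref{exsio+}'s uniformity clause (and, underlying it, Corollary \ref{unif bound}) applies. This requires a one-sided analogue of the Jones/Coifman–Rochberg factorization or at least the weaker statement that $A^-_1(\R_+)\cdot A^-_\infty(\R_+)\subseteq A^-_\infty(\R_+)$ with controlled constants — provable from the reverse Hölder inequality established in Proposition \ref{lem.dubbling} and the definitions, but it is the technical heart of the argument. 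A secondary subtlety is choosing $r$ (equivalently $r'\in(1,p_\E)$) and invoking $(D'_{r,+})$ for that specific large $r$; since the hypothesis supplies $(D'_{r,+})$ for all $r\in(1,\infty)$ this is harmless, but the convexification exponent must be tracked consistently so that both $\E^{(r')}_w$ and its associate space retain Boyd indices strictly between $1$ and $\infty$.
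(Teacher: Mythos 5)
Your overall plan—combine the Coifman-type inequality of Theorem \ref{exsio+} with a Rubio de Francia iteration driven by the one-sided operators, using uniformity of constants from Corollary \ref{unif bound}—is the right shape, but the execution as written contains a genuine error that breaks the argument, and it omits a step that the paper spends real effort on.

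\textbf{The chirality of the iteration operator is wrong.} You define $\cR$ by iterating $M^-_+$ and then claim $\cR h\cdot w\in A^-_1(\R_+)$. Unwinding the definitions: $w\in A^+_1(\R_+)$ is the class governed by $M^-$ (i.e.\ $M^-w\leq Cw$ on $\R_+$), while $w\in A^-_1(\R_+)$ is governed by $M^+$. The bound $M^-_+(\cR h)\leq 2\|M^-_+\|\,\cR h$ therefore puts $\cR h$ into $A^+_1(\R_+)$, the \emph{opposite}-handed class, and the product of an $A^+_1(\R_+)$ weight with an $A^-_{p_\E}(\R_+)$ weight need not lie in $A^-_\infty(\R_+)$: for instance $e^t\in A^+_1(\R_+)$ and $1\in A^-_{p_\E}(\R_+)$, but $e^t\notin A^-_\infty(\R_+)$ (test \eqref{Ainfty+} with $a=0$, $b=c-1$, $S=[0,\varepsilon]$ and let $c\to\infty$). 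Appealing to Remark \ref{rem.weights} does not help either—that remark concerns products with monotone factors, not with general $A^\pm_1$ weights. To get $(\cR h)w\in A^-_1(\R_+)$ you must iterate the \emph{conjugated} operator $Sh:=M^+_+(hw)/w$, exactly as the paper does for $\cR'$: then $S(\cR' h)\leq 2\|S\|\cR'h$ translates into $M^+_+((\cR'h)w)\leq C(\cR'h)w$, which is the $A^-_1(\R_+)$ condition. As written, your weight simply does not fall into the class to which Theorem \ref{exsio+} applies, so the central inequality cannot be invoked.

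\textbf{The a priori integrability hypothesis of Theorem \ref{exsio+} is not addressed.} The Coifman-type inequality \eqref{r+} holds only for $f\in L^\infty_c(\R_+;\X)$ with $M^-_+(|Tf|_\Y)\in L^p_w(\R_+)$, and this is not free for the weights $(\cR h)w$ appearing in the iteration. The paper deals with this by a separate first step: truncate $w_k:=\inf(w,k)$ so that $M^-_+(|Tf|_\Y)\in L^p_{w_k}$ automatically, apply Theorem \ref{exsio+} for each $w_k$, control the constants uniformly via Corollary \ref{unif bound}, and pass to the limit $k\to\infty$. This establishes the scalar estimate \eqref{Coifmantype} ($T$ bounded on $L^p_w(\R_+;\X)$ for all $p$ and all $w\in A^-_p(\R_+)$), which is then the input to the Rubio de Francia stage. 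Your proposal applies Theorem \ref{exsio+} directly inside the duality estimate without verifying its hypothesis, so the argument has a gap even after the chirality is fixed.

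\textbf{Organization and smaller issues.} The paper uses a two-sided Rubio de Francia iteration ($\cR$ on $\E_w$ via $M^-_+$, and $\cR'$ on $\E'_w$ via $S$) and the one-sided Jones-type factorization $w_{f,h}=(\cR|f|_\X)^{1-p}(\cR'h)w\in A^-_p(\R_+)$, rather than passing through the $r'$-concavification $\E^{(r')}_w$. Your convexification route is an attractive "linearization" idea (it reduces the Coifman inequality to the exponent $p=r'$, where the right side involves $M^-_+$ of $|f|^{r'}$ itself), and it can be made to work, but it needs more care than is stated: the $1/r'$-power of a Banach function space is a quasi-norm in general, and $r'$-convexity of $\E$ (or an equivalent $r'$-convex renorming) is an extra hypothesis not supplied by $p_\E>r'$ alone. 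Also note the bookkeeping slips: for $f\in\E_w(\R_+;\X)$ one has $|f|_\X^{r'}\in\E^{(r')}_w$, not $\in\E_w$, and the Hölder step must pair $\E^{(r')}_w$ with $(\E^{(r')}_w)'$ throughout, not with $(\E_w)'$. Finally, the density/extension appeal at the end should be made in $\E_w$ itself (the paper invokes Lorentz–Luxemburg duality $\E_w=(\E_w')'$ rather than a dense-subspace argument).

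In short: the strategy is on track and you correctly locate the technical heart (uniform weight constants through the iteration), but you need to (1) switch the iteration operator to $S=M^+_+(\cdot\, w)/w$ so the weights land in $A^-_1(\R_+)$, and (2) first prove the $L^p_w$ boundedness via the truncation argument to legitimize the use of Theorem \ref{exsio+} inside the iteration.
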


The proof of Theorem \ref{Boyd th} follows in principle the idea of the proof of \cite[Theorem 7]{ChKr14}. We provide only main supplementary observations which should be made.

\begin{proof}
We first show that $T$ extends to a bounded operator from $L^p_w(\R_+ ; \X)$ into $L^p_w(\R_+;\Y)$ for every $p\in (1,\infty)$ and every weight $w \in A^-_p(\R_+)$.

Fix $p\in (1,\infty)$ and $w \in A^-_p(\R_+)$. Set $w_k:=\inf (w, k)$ ($k\geq 1$). Then,  one can easily show that $w_k\in A^-_p(\R_+)$ and $[w_k]_{A^-_p(\R_+)}\leq 2^p (1+ [w]_{A^-_p(\R_+)})$ for every $k\geq 1$. Moreover, by Lemma \ref{lem.openness}, there exists  $q\in(1,p)$ such that $w\in A^-_{q}(\R_+)$.

Combining Theorem \ref{exsio+} (for $w=\chi_{\R_+}$) with Theorem \ref{thm.restSawyer}, note that $T$ is bounded on $L^p(\R_+; X)$. In particular, $|Tf|_\Y \in L^p_{w_k}$, and consequently, by Theorem \ref{thm.restSawyer}, $M^-_+\left( |Tf|_\Y\right) \in  L^p_{w_k}$ for every $k\geq 1$ and every $f\in L^\infty_c (\R_+; \X)$. 
Therefore, we can again apply Theorem \ref{exsio+} for $w_k$ and $r$ such that $p/r' = q$, to obtain, for every $f\in L_c^\infty (\R_+; \X)$, 
\begin{align*}
\int_{\R_+} |Tf|_\Y^p w_k \; \ud t &  \leq 
 C(p, w_k, T, [K]_{D_{1,+}}, [K]_{D_{r,+}'}) \,  
\int_{\R_+} \left(M^-_+ (|f|_\X^{r'})\right)^{p/r'} w_k \; \ud t\\
&\leq   \, 
C(p, w_k, T, [K]_{D_{1,+}}, [K]_{D_{r,+}'}) \, \|M^-_+\|^{q}_{q,w_k}
\int_{\R_+}  |f|_\X^{p} w_k \; \ud t.
\end{align*} 
By Corollary \ref{unif bound} and the second statement of Theorem \ref{exsio+} we get 
\[ 
C(p,w,T, [K]_{D_{1,+}}, [K]_{D_{r,+}'}):= \sup_{k\geq 1}C(p, w_k, T, [K]_{D_{1,+}}, [K]_{D_{r,+}'}) \, \|M^-_+\|^{q}_{q,w_k}<\infty.
\]
Thus, letting $k\rightarrow \infty$ in the above inequalities, we thus obtain, for every $f\in L_c^\infty (\R_+; \X)$,
\begin{equation}\label{Coifmantype}
 \int_{\R_+} |Tf|_\Y^p w \; \ud t \leq C(p,w,T, [K]_{D_{1,+}}, [K]_{D_{r,+}'}) \,
\int_{\R_+}  |f|_\X^{p} w \; \ud t.
\end{equation}
 Since the space of all functions in $L^\infty_c ((0,\infty ); \X)$ is dense in $L^p_w(\R_+;\X)$, $T$ extends to a bounded operator from $L^p_w(\R_+;\X)$ into $L^p_w(\R_+;\Y)$ as we claimed. Moreover, if $\cF\subseteq A_p^-(\R_+)$ with $\sup_{w\in \cF}[w]_{A_p^-(\R_+)}<\infty$ then $\sup_{w\in \cF}C(p,w,T , [K]_{D_{1,+}}, [K]_{D_{r,+}'}) <\infty$; note that the $q$, which has been chosen above and which determines $r$, can be chosen uniformly in $w\in\cF$, and see Corollary \ref{unif bound}.
 
We are now in a position to adapt Rubio de Francia's iteration algorithm to the class $A_p^-(\R_+)$ and the operators $M^\pm_+$. See for example the proofs of \cite[Theorem 4.10 and Section 3.6]{CrMaPe11}.

Fix $\E$ and $w\in A^-_{p_\E}(\R_+)$ as in the assumptions. Let $\E_{w}'$ be the associate space of $\E_w$. Let  $\cR= \cR_w :\E_{w} \rightarrow \E_{w}$ and $\cR'= \cR'_w :\E_{w}' \rightarrow \E_{w}'$ be defined by
\begin{align*}
\cR h (t) & := \sum_{k=0}^{\infty} \frac{(M^-_+)^k h(t)}{2^k \|M^-_+\|^k_{\E,w}}, 
\text{ for every } 0\leq h\in \E_{w}, \text{ and} \\
\cR' h (t) & := \sum_{k=0}^{\infty} \frac{S^k h(t)}{2^k \|S\|^k_{\E',w}}, 
\text{ for every } 0\leq h\in \E'_{w},
\end{align*}
where $Sh:= M^+_+(h w)/w$ for $h\in \E'_{w}$. 
By Proposition \ref{LSanalogue}, the operators $\cR$ and $\cR'$ are well-defined. Indeed, for $\cR'$, note that the operator $S$ is bounded on $L^q_w$ for all $q\in (q_w, \infty)$ with $q_w <p_\E$; see Lemma \ref{lem.openness}. Therefore, a similar argument to that used in the proof of Proposition \ref{LSanalogue} yields the boundedness of $S$ on $\E_w$.

Moreover, the following statements are 
easily verified:
\begin{itemize}
\item [(i)]  For every positive $h\in\E_w$ one has 
\begin{align*}
& |h| \leq \cR h , \\
& \| \cR h\|_{\E_{w}}\leq 2 \|h\|_{\E_{w}} , \text{ and } \\
& \cR h\in A^+_1(\R_+) \text{ with }  [\cR h]_{A^+_1(\R_+)}\leq 2\|M^+_+\|_{\E,w} . 
\end{align*}
\item [(ii)]  For every positive $h\in\E_w'$ one has 
\begin{align*}
& h\leq \cR' h , \\
& \| \cR' h\|_{\E_{w}'}\leq 2 \|h\|_{\E_{w}'} ,  \text{ and }\\  
& (\cR' h) w\in A^-_1(\R_+) \text{ with } [(\cR' h) w]_{A^-_1(\R_+)}\leq 2\|S\|_{\E',{w}} .
\end{align*}
\end{itemize} 

Now fix $p\in (1,\infty )$. 
Note that $|f|_X\in L^p_{w_{f,h}}$ for every $f\in \E_w(X)$ and every positive
 $h\in \E'_w$, where
$w_{f,h}:=(\cR |f|_\X)^{1-p}(\cR' h) w$. By H\"older's inequality and the properties (i) and (ii) above, we obtain that  $w_{f,h} \in A^-_p(\R_+)$ and 
\begin{equation*}
[w_{f,h}]_{A^-_p(\R_+)}\leq [(\cR |f|_\X)]^{p-1}_{A^+_1(\R_+)}[(\cR' h)w]_{A^-_1(\R_+)}
\leq 2^{p}\|M^+_+\|^{p-1}_{\E,w} \|S\|_{\E',{w}}.
\end{equation*} 
Furthermore, by Corollary \ref{unif bound}, there exists $q<p$ and such that 
\begin{equation*}
 \sup\{ [w_{g,h}]_{A^-_{q}(\R_+)}: f\in \E_w (\X) \text{ and } 0\leq h\in \E'_w\}<\infty.
\end{equation*}
Thus, Theorem \ref{exsio+} (for $r$ such that $p/r'= q$) shows that there exists a constant $C\geq 0$ such that:
\begin{equation}\label{extrapineq}
\int_{\R_+} |Tf|_\Y^p w_{f,h} \; \ud t \leq C \int_{\R_+}  |f|_\X^{p} w_{f,h} \; \ud t 
\end{equation} 
for every $f\in \E_w(\R_+;\X)$ and every positive $h\in \E'_w$, where the constant $C$ is independent on $f$ and $h$. Now, using \eqref{extrapineq}, we can simply follow the corresponding idea in the proof of \cite[Theorem 4.10]{CrMaPe11} to obtain
\[
\int_{\R_+} |Tf|_\Y h w \; \ud t \leq 4C \,  \|f\|_{\E_{w}(\R_+;\X)} \, \|h\|_{\E'_{w}}
\]
for every $f\in \E_w(\R_+;\X)$ and every positive $h\in \E'_w$. 
Recall that, by the Lorentz-Luxemburg theorem, $\E_w = (\E_w')'$; see  \cite[Theorem 2.7,  p.\,10]{BeSh88}. Therefore, $T$ maps $\E_w(\R_+;\X)$ into $\E_w(\R_+;\Y)$ and is continuous.  This completes the proof.
\end{proof}

We conclude with  an extrapolation result which is particularly adapted to singular integral operators discussed in the next section. We consider the function space $\E_{w,loc} (\R_+;\X )$  defined by
\[
\E_{w,loc} (\R_+; \X ) := \left\{ u : \R_+ \to \X : u \chi_{[0,\tau]} \in \E_w (\R_+; \X ) \text{ for every } \tau >0 \right\}.
\]
This space is a Fr\'echet space for the natural topology. The notion of singular integral operators extends in a natural way to continuous, linear operators on the space  $L^p_{loc}(\R_+; X)$ ($p\in(1,\infty)$). We say that a kernel $K:\R_+\times \R_+\rightarrow \cL(X)$ supported on $\{(t,s)\in\R_+\times \R_+: s<t \}$ {\em satisfies the $(D_{r,+})$ (resp. $(D_{r,+}')$) condition locally}, if the function $K\chi_{\{(t,s)\in \R_+\times\R_+: s<t < \tau   \}}$ satisfies the $(D_{r,+})$ (resp. $(D_{r,+}'))$ condition for every $\tau>0$.

\begin{corollary} \label{cor.exsio}
Let $T$ be a singular integral operator from $L^p_{loc} (\R_+ ; \X )$ into $L^p_{loc} (\R_+ ;\Y)$ ($p\in (1,\infty )$), associated with a kernel $K$ satisfying the conditions $(D_{1,+})$ and $(D_{r,+}')$ locally for every $r\in[1,\infty)$. Then for every rearrangement invariant Banach function space  $\E$ with Boyd indices $p_{\E}$, $q_{\E} \in (1, \infty)$, and for every weight $w\in A^-_{p_\E}(\R_+)$, $T$ extrapolates to a continuous linear operator from $\E_{w,loc} (\R_+; \X)$ into $\E_{w,loc} (\R_+;\Y)$.
\end{corollary}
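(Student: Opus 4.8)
\emph{Strategy.} I would localise the problem, apply Theorem~\ref{Boyd th} to truncated versions of $T$, and reassemble using the causality built into the support condition $\supp K\subseteq\{(t,s):s<t\}$. Fix a rearrangement invariant space $\E$ with $p_\E,q_\E\in(1,\infty)$, a weight $w\in A^-_{p_\E}(\R_+)$, and $p_0\in(1,\infty)$ such that $T$ is continuous on $L^{p_0}_{loc}(\R_+;\X)$. The first step is to record that $T$ is \emph{causal}: if $f\in L^{p_0}_{loc}(\R_+;\X)$ vanishes on $[0,\tau)$ then $Tf$ vanishes on $[0,\tau)$ --- for $f\in L^\infty_c(\R_+;\X)$ this follows from the integral representation (each $t<\tau$ is off $\supp f$ and $K(t,\cdot)$ vanishes on $[\tau,\infty)$), and the general case follows by truncating $f$ and using the continuity of $T$ on $L^{p_0}_{loc}$. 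In particular $Tg|_{(0,\tau)}$ depends only on $g|_{(0,\tau)}$, and $\|Tg\|_{L^{p_0}(0,\tau)}\le C_\tau\|g\|_{L^{p_0}(0,\tau)}$ for every $\tau>0$.

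\emph{Truncated operators.} For $\tau>0$ set $K_\tau:=K\,\chi_{\{(t,s)\in\R_+\times\R_+\,:\,s<t<\tau\}}$ and define $T_\tau f:=\chi_{[0,\tau)}\cdot T(f\chi_{[0,\tau)})$ for $f\in L^{p_0}(\R_+;\X)$; by the causality estimate $T_\tau$ is bounded on $L^{p_0}(\R_+;\X)$. For $f\in L^\infty_c(\R_+;\X)$ and $t\notin\supp f$ one has $T_\tau f(t)=0$ when $t\ge\tau$, while for $t<\tau$, by causality and $\supp K\subseteq\{s<t\}$,
\[
T_\tau f(t)=T(f\chi_{[0,\tau)})(t)=\int_0^\tau K(t,s)f(s)\,\ud s=\int_0^t K(t,s)f(s)\,\ud s=\int_{\R_+}K_\tau(t,s)f(s)\,\ud s .
\]
Hence $T_\tau$ is a singular integral operator with kernel $K_\tau$ in the sense of Section~\ref{sec.Coifman}. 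By hypothesis $K_\tau$ satisfies $(D_{1,+})$ and $(D'_{r,+})$ for every $r\in[1,\infty)$, so Theorem~\ref{Boyd th} applies: $T_\tau$ extrapolates to a bounded linear operator from $\E_w(\R_+;\X)$ into $\E_w(\R_+;\Y)$, of norm, say, $C_\tau'$.

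\emph{Gluing.} For $u\in\E_{w,loc}(\R_+;\X)$ and $\tau'>0$ we have $u\chi_{[0,\tau']}\in\E_w(\R_+;\X)$, so $v_{\tau'}:=T_{\tau'}(u\chi_{[0,\tau']})\in\E_w(\R_+;\Y)$ is defined. I would then check that $v_{\tau'}|_{(0,\tau)}$ is independent of $\tau'>\tau$. First, $T_{\tau'}$ factors through the restriction of its argument to $[0,\tau')$ (directly from its definition), so $v_{\tau'}=T_{\tau'}(u\chi_{[0,\tau'']})$ for every $\tau''\ge\tau'$. Second, $T_{\tau'}$ and $T_{\tau''}$ agree on $(0,\tau')$ on any common argument: this is causality on $L^\infty_c$, and it propagates to $\E_w$ because --- as is implicit in the proof of Theorem~\ref{Boyd th} --- each extrapolated $T_\tau$ is the restriction to $\E_w$ of a linear operator bounded on $L^{p_1}_w+L^{p_2}_w(\R_+;\X)$ for some $q_w<p_1<p_\E$ and $p_2>q_\E$, while $\E_w\subseteq L^{p_1}_w+L^{p_2}_w$ (Lemma~\ref{ing1}(i)) and $L^\infty_c$ is dense in each $L^{p_i}_w$. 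Therefore $(Tu)|_{(0,\tau)}:=v_{\tau'}|_{(0,\tau)}$ ($\tau'>\tau$ arbitrary) is well defined, $Tu\in\E_{w,loc}(\R_+;\Y)$, and
\[
\big\|(Tu)\chi_{[0,\tau]}\big\|_{\E_w}\le\big\|v_{2\tau}\big\|_{\E_w}\le C_{2\tau}'\,\big\|u\chi_{[0,2\tau]}\big\|_{\E_w}
\]
for every $\tau>0$; this is exactly the continuity of $u\mapsto Tu$ between the Fr\'echet spaces $\E_{w,loc}(\R_+;\X)$ and $\E_{w,loc}(\R_+;\Y)$. That this operator extends the original $T$ on $L^{p_0}_{loc}(\R_+;\X)\cap\E_{w,loc}(\R_+;\X)$ follows once more from causality and density.

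\emph{Main obstacle.} The analytic content is just the one invocation of Theorem~\ref{Boyd th}; the work is in the bookkeeping around the truncations: verifying that $T_\tau$ is genuinely a singular integral operator on $\R_+$ with kernel $K_\tau$ (so Theorem~\ref{Boyd th} applies verbatim), and that the family $\{T_\tau\}_{\tau>0}$ of extrapolated operators is compatible and hence glues to a single operator on $\E_{w,loc}(\R_+;\X)$. Both rest on the causality coming from $\supp K\subseteq\{s<t\}$, together with the embedding $\E_w\hookrightarrow L^{p_1}_w+L^{p_2}_w(\R_+)$ of Lemma~\ref{ing1}(i).
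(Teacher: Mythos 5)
Your proposal is correct and takes essentially the same approach as the paper's proof: truncate $T$ to $T_\tau f := Tf\,\chi_{[0,\tau]}$, observe that $T_\tau$ is a singular integral operator with kernel $K\chi_{\{s<t<\tau\}}$ and hence falls under Theorem~\ref{Boyd th}, then pass back to $\E_{w,loc}$. The paper's proof is a three-sentence sketch; you have merely supplied the causality and gluing bookkeeping that the paper leaves implicit, and your slightly different definition $T_\tau f := \chi_{[0,\tau)}T(f\chi_{[0,\tau)})$ coincides with the paper's by the very causality you establish.
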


\begin{proof}
Note that the operator $T_\tau$ ($\tau>0)$ given by $T_\tau f := Tf \, \chi_{[0,\tau]}$ is a singular integral operator on $L^p(\R_+; \X)$ associated with the kernel $K\chi_{\{(t,s)\in \R_+\times \R_+: s<t< \tau \}}$. By assumption on the kernel $K$ and by Theorem \ref{Boyd th}, the operators $T_\tau$ ($\tau>0$) are bounded from $\E_w(\R_+;\X)$ into $\E_w (\R_+;\Y)$. Consequently, since $\E_{w,loc}(\R_+; \X)\cap L^p_{loc} (\R_+; \X)$ is dense in $\E_{w,loc}(\R_+; \X)$, we easily find the desired claim.
\end{proof}

\section{Application to $L_p$-maximal regularity}\label{sec.apply}

\subsection{First order problems}

Let $A$ be a closed linear operator on a Banach space $X$. Let $p\in (1,\infty)$.
We say that the first order Cauchy problem
\begin{equation} \label{first order}
\dot{u} + A u = f \text{ on } \R_+ , \quad u(0) = 0.
\end{equation} 
has {\em $\E_w$-maximal regularity} if for each 
$f\in \E_{w, loc}(\R_+; X)$ there exists a unique function $u\in  W^{1,1}_{loc}(\R_+,X)$ such that $\dot u$, $Au\in \E_{w, loc}(\R_+; X)$, and such that $u$ solves (\ref{first order}).

It is well known that if the above Cauchy problem has $L^p$-maximal regularity (that is, $\E = L^p$ and $w=1$), then $-A$ generates an analytic $C_0$-semigroup $(e^{-tA})_{t\geq 0}$. Moreover, if $-A$ generates a $C_0$-semigroup, then the unique mild solution of \eqref{first order} is given by Duhamel's formula $u(t) = \int_0^\infty e^{-(t-s)A} f(s) \; \ud s$. In particular, if the semigroup is analytic (or merely differentiable), then 
\begin{align*}
& Au(t) = \int_0^t Ae^{-(t-s)A} f(s)\; \ud s \\
& \text{for every } f\in L^\infty (\R_+ ; \X) \text{ with compact support, and every } t\notin \supp f .
\end{align*}
Hence, summing up, if the first order Cauchy problem \eqref{first order} has $L^p$-maximal regularity, then the operator
\[
 T : L^p_{loc} (\R_+ ;X) \to L^p_{loc} (\R_+ ;X) , \quad f \mapsto Tf:= Au ,
\]
which is continuous by the closed graph theorem, is a singular integral operator with translation-invariant kernel given by
\[
 K(t,s) := \begin{cases} 
            Ae^{-(t-s)A} & \text{if } t>s>0 , \\[2mm]
            0 & \text{else} .
           \end{cases}
\]
It is well known that the analyticity of the semigroup implies that $K$ satisfies the standard conditions locally. Hence, the corollary to our main extrapolation theorem, Corollary \ref{cor.exsio}, yields the following result.

\begin{theorem} \label{thm.first.order}
 Assume that the first order Cauchy problem \eqref{first order} has $L^p$-maximal regularity for some $p\in (1,\infty )$. Then it has $\E_w$-maximal regularity for every rearrangement-invariant Banach function space $\E$ with Boyd indices $p_\E$, $q_\E\in (1,\infty )$ and every weight $w\in A_p^- (\R_+ )$. 
\end{theorem}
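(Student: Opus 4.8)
The plan is to reduce Theorem~\ref{thm.first.order} to Corollary~\ref{cor.exsio} by verifying that the operator $T\colon f\mapsto Au$ associated with the $L^p$-maximal regularity problem is a singular integral operator of the type covered by that corollary, and then to interpret the conclusion back in terms of maximal regularity. The bulk of the argument is already outlined in the text preceding the statement; what remains is to (i) check that the kernel $K(t,s) = Ae^{-(t-s)A}\chi_{\{t>s>0\}}$ satisfies the conditions $(D_{1,+})$ and $(D'_{r,+})$ locally for every $r\in[1,\infty)$, and (ii) upgrade the resulting $\E_{w,loc}(\R_+;X)$-boundedness of $T$ to a genuine $\E_w$-maximal regularity statement, including existence and uniqueness of the solution $u$.

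\medskip

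First I would recall that $L^p$-maximal regularity forces $-A$ to generate a bounded analytic $C_0$-semigroup $(e^{-tA})_{t\geq 0}$, so that for $t>0$ the operator $Ae^{-tA}$ is bounded with $\|Ae^{-tA}\|_{\cL(X)}\leq C/t$, and moreover $t\mapsto Ae^{-tA}$ is analytic on $(0,\infty)$ with $\|\frac{d}{dt}Ae^{-tA}\|_{\cL(X)} = \|A^2 e^{-tA}\|_{\cL(X)}\leq C/t^2$. Because the kernel is translation-invariant, it suffices to estimate, for fixed $h>0$ and on the dyadic annuli $2^m h < t-s \leq 2^{m+1}h$, the quantities $|K(t,s)-K(t,s+h)|$ and $|K(t,s)-K(t-h,s)|$; by the fundamental theorem of calculus these are dominated by $\int |A^2 e^{-uA}|\,\ud(\cdot)$ over an interval of length $h$ located at distance $\sim 2^m h$ from the singularity, hence by $C\,h\,(2^m h)^{-2} = C\,2^{-2m}h^{-1}$. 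Plugging this into the definition of $[K]_{D_{r,+}'}$ gives, for $r<\infty$,
\[
h^{1/r'}\sum_{m=1}^\infty 2^{m/r'}\bigl((2^m h)\,(C\,2^{-2m}h^{-1})^r\bigr)^{1/r}
= C\sum_{m=1}^\infty 2^{m/r'}2^{m/r}2^{-2m}
= C\sum_{m=1}^\infty 2^{-m} < \infty ,
\]
uniformly in $h$, and similarly for $(D_{1,+})$; the local restriction $t<\tau$ only shrinks the domains of integration, so the local versions of the conditions hold as well. Since the semigroup is analytic, $t\mapsto e^{-tA}f(t)$ (and hence $u$) is well-defined and the representation $Au(t)=\int_0^t Ae^{-(t-s)A}f(s)\,\ud s$ holds for $f\in L^\infty_c$ off the support of $f$, so $T$ is indeed a singular integral operator in the sense of Section~\ref{sec.r.i.extrap}, bounded on $L^p_{loc}(\R_+;X)$ by the closed graph theorem.

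\medskip

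Applying Corollary~\ref{cor.exsio} then yields, for every rearrangement invariant $\E$ with $p_\E,q_\E\in(1,\infty)$ and every $w\in A_p^-(\R_+)$ (note $A_p^-(\R_+)\subseteq A_{p_\E}^-(\R_+)$ when $p\leq p_\E$; if $p>p_\E$ one first invokes the openness property, Lemma~\ref{lem.openness}, to pass to a smaller exponent, or observes that $L^p$-maximal regularity for one $p$ gives it for all $p\in(1,\infty)$), that $T$ extends to a continuous operator from $\E_{w,loc}(\R_+;X)$ into itself. The final step is the translation back to maximal regularity: given $f\in\E_{w,loc}(\R_+;X)$, define $u$ by Duhamel's formula $u(t)=\int_0^t e^{-(t-s)A}f(s)\,\ud s$; then $Au = Tf\in\E_{w,loc}(\R_+;X)$, and from $\dot u = f - Au$ one reads off $\dot u\in\E_{w,loc}(\R_+;X)$ as well, with $u(0)=0$ and $u\in W^{1,1}_{loc}(\R_+;X)$. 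Uniqueness follows because any two solutions differ by a solution $v$ of $\dot v + Av = 0$, $v(0)=0$ with $\dot v, Av$ locally integrable, and such a $v$ vanishes (by the uniqueness part of $L^p$-maximal regularity on each bounded interval, or directly since $v(t)=e^{-tA}v(0)=0$).

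\medskip

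I expect the main obstacle to be purely expository rather than mathematical: the kernel estimates are routine consequences of analyticity and the text even asserts that ``the analyticity of the semigroup implies that $K$ satisfies the standard conditions locally,'' so one must decide how much of that verification to reproduce. The one genuinely delicate point worth flagging is the matching of exponents — ensuring that the weight class $A_p^-(\R_+)$ appearing in the hypothesis is compatible with the requirement $w\in A_{p_\E}^-(\R_+)$ of Corollary~\ref{cor.exsio}; this is handled cleanly by the fact that $L^p$-maximal regularity is independent of $p\in(1,\infty)$ together with the openness of the one-sided $A_p^-$ classes, but it should be stated explicitly so the reader sees why no loss occurs.
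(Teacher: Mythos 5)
Your proposal is correct and takes essentially the same route as the paper, which reduces the theorem to Corollary~\ref{cor.exsio} by observing that the maximal regularity operator $f\mapsto Au$ is a singular integral operator with translation-invariant kernel $Ae^{-(t-s)A}\chi_{\{t>s>0\}}$ satisfying the standard (hence $(D_{1,+})$ and $(D'_{r,+})$) conditions locally by analyticity of the semigroup; you merely spell out the kernel estimates, the Duhamel reconstruction of $u$, and the uniqueness argument, all of which the paper treats as routine. Your remark about reconciling $w\in A_p^-(\R_+)$ in the hypothesis with the requirement $w\in A_{p_\E}^-(\R_+)$ in Corollary~\ref{cor.exsio} is well taken; the correct resolution is the $p$-independence of $L^p$-maximal regularity (De Simon/Sobolevskii), since the openness lemma alone only produces \emph{some} exponent $q<p$, not necessarily one with $q\le p_\E$.
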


Conditions on the operator $A$ ensuring that the first order Cauchy problem \eqref{first order} has $L^p$-maximal regularity have been studied extensively in the literature, and we only refer the reader to the survey articles by Arendt \cite{Ar04} and Kunstmann \& Weis \cite{KuWe04}. 

\begin{remark}
A well-known result going back to De Simon \cite{De64} in the case of Hilbert spaces and to Sobolevskii \cite{So64} in the general case says that $L^p$-maximal regularity is independent of $p$, that is, if the first order Cauchy problem \eqref{first order} has $L^p$-maximal regularity for some $p\in (1,\infty)$, then it has $L^p$-maximal regularity for all $p\in(1,\infty)$. The proof of this extrapolation result is based on the theory of singular integral operators with operator-valued kernels, too. Namely, it is based on the fact that Calder\'on-Zygmund operators satisfy endpoint estimates (a weak $(1,1)$-estimate and an $L^\infty$-$BMO$ estimate) and the Marcinkiewicz interpolation theorem; see Benedek, Calder\'on \& Panzone \cite{BeCaPa62} for the corresponding extrapolation theorem and also Cannarsa \& Vespri \cite{CaVe86} and Hieber \cite{Hi99}.  

Subsequently, Pr\"uss \& Simonett showed that $L^p$ maximal regularity for some $p\in (1,\infty )$ implies $L^p_w$-maximal regularity for every $p\in (1,\infty)$ and every power weight $w(t):=t^{\beta}$ with  $\beta\in (0, p-1)$ \cite[Theorem 2.4]{PrSi04}. The extension for all Muckenhoupt power weights, that is, $\beta \in (-1,p-1)$, was established later by Haak \& Kunstmann \cite[Theorem 1.13]{HaKu07}, while Auscher \& Axelsson obtained the result for $\beta \in (-\infty , p-1)$, assuming, however, that $\X$ is a Hilbert space and $p=2$ \cite[Theorem 1.3]{AuAx11a}; see also \cite{AKMP12} for related results. Note that for $\beta\leq -1$, the power weights $w_\beta(t) = |t|^{\beta}$ fall out of the class of Muckenhoupt $A_p$-weights and even out of the larger class of Sawyer $A_p^+(\R)$-weights. However, they do belong to $A_p^- (\R_+ )$ for every $\beta \in (-\infty ,p-1)$, that is, for $\beta$ in the range considered by Auscher \& Axelsson (see also Remark \ref{rem.weights}). The power weights play an important role from the point of view of initial value problems. In fact, for initial values in the classical real interpolation spaces between $X$ and the domain $D_A$ ($p\in (1,\infty )$, $\beta\in (-1,p-1)$), the regularity of solutions can now be characterized. Note also that the value  $\beta =-1$ plays a essential role in a new approach to non-smooth boundary value problems studied in \cite{AuAx11a}.  See also an extension of these results to boundedness of maximal regularity operator $\cM_A$ on \emph{weighted} tent spaces \cite{AKMP12}.

Recently, in \cite{ChFi14}, Chill \& Fiorenza have shown that $L^p$-maximal regularity for the first order problem actually implies $\E_w$-maximal regularity for every rearrangement invariant Banach function space with Boyd indices $p_\E$, $q_\E\in (1,\infty )$ and every Muckenhoupt weight $w\in A_{p_\E}$. The corresponding abstract extrapolation result of \cite{ChFi14} is formulated for singular integral operators on $\R^N$ where Muckenhoupt weights seem to be appropriate, while we now see that in the application to extrapolation of maximal regularity of first order Cauchy problems on the half-line (and in Theorem \ref{Boyd th} above), the larger $A_p^- (\R_+ )$ classes are better adapted.  
\end{remark}

As in Chill \& Fiorenza \cite{ChFi14}, one may also apply our abstract extrapolation result to the nonautonomous first order Cauchy problem 
\begin{equation} \label{eq.first.order.nonautonomous}
 \dot u + A(t)u = f \text{ on } \R_+, \quad u(0) = 0 .
\end{equation}
Maximal regularity for this problem is defined similarly as for the autonomous Cauchy problem. There exists in the literature a set of various logically independent conditions on the operators $A(t)$ which imply wellposedness of this nonautonomous Cauchy problem, and sometimes also $L^p$-maximal regularity. In the so-called parabolic case, we mention for wellposedness the Kato-Tanabe conditions \cite{KaTa62}, \cite[Section 5.3]{Ta79}, the Acquistapace-Terreni conditions \cite{AcTe87}, the conditions from \cite[Theorem 6.1, p.150]{Pa83}, or -- in Hilbert spaces -- the conditions from  \cite[Th\'eor\`eme 1, p.670]{DaLi87VIII}. In \cite[Section 7]{ChFi14} several results from the literature were collected showing that if $(A(t))_{t\in\R_+}$ is a family of closed, linear operators satisfying the Kato-Tanabe conditions or if both $(A(t))_{t\in\R_+}$ and $(A(t)')_{t\in\R_+}$ satisfy the Acquistapace-Terreni conditions, then $(A(t))_{t\in\R_+}$ generates an evolution family $(U(t,s))_{t\geq s\geq 0}$, the solution 
of \eqref{eq.first.order.nonautonomous} is given by 
\[
 u(t) = \int_0^t U(t,s)f(s)\; \ud s ,
\]
and the kernel
\[
 K(t,s) = \begin{cases}
           A(t)U(t,s) & \text{if } t>s>0 , \\[2mm]
           0 & \text{else} ,
          \end{cases}
\]
associated with the maximal regularity operator $Tf := A(\cdot )u$ is a standard kernel. From these observations and Corollary \ref{cor.exsio}, we immediately obtain the following theorem.

\begin{theorem} \label{thm.firstorder.nonautonomous}
Assume that the family $(A(t))_{t\in\R_+}$ satisfies the Kato-Tanabe conditions or that both $(A(t))_{t\in\R_+}$ and $(A(t)')_{t\in\R_+}$ satisfy the Acquistapace-Terreni conditions. Assume further that the nonautonomous first order Cauchy problem \eqref{eq.first.order.nonautonomous} has $L^p$-maximal regularity. Then it has $\E_w$-maximal regularity for every rearrangement invariant Banach function space $\E$ with Boyd indices $p_\E$, $q_\E\in (1,\infty )$ and every weight $w\in A_p^- (\R_+ )$.
\end{theorem}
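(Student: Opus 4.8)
The plan is to reduce Theorem~\ref{thm.firstorder.nonautonomous} to Corollary~\ref{cor.exsio} by verifying that the maximal regularity operator $Tf := A(\cdot)u$ is, under the stated hypotheses, a singular integral operator on $L^p_{loc}(\R_+;X)$ whose kernel satisfies $(D_{1,+})$ and $(D'_{r,+})$ locally for every $r\in[1,\infty)$. First I would recall, citing the references already collected in \cite[Section~7]{ChFi14}, that under either the Kato--Tanabe conditions or the Acquistapace--Terreni conditions (for $(A(t))_{t}$ and $(A(t)')_{t}$), the family $(A(t))_{t\in\R_+}$ generates a strongly continuous evolution family $(U(t,s))_{t\geq s\geq 0}$ with the parabolic-type smoothing estimates
\[
|A(t)U(t,s)|_{\cL(X)} \leq \frac{C}{t-s}, \qquad
|A(t)U(t,s) - A(t')U(t',s)|_{\cL(X)} \leq C\,\frac{(t-t')^{\mu}}{(t-s)^{1+\mu}}
\]
for $s<t<t'$ and some $\mu\in(0,1]$, together with the corresponding estimate in the $s$-variable. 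From these, the kernel displayed just before the theorem, $K(t,s) = A(t)U(t,s)$ for $t>s>0$ and $0$ otherwise, is a standard Calder\'on--Zygmund kernel on $\{(t,s)\in\R_+\times\R_+ : s<t\}$, hence in particular satisfies, locally on $\{s<t<\tau\}$, the conditions $(D^*_\infty)$ and its adjoint analogue; by the implications recorded in Section~1 (a $(D^*_\infty)$-type Lipschitz bound is stronger than any $(H_r)$, equivalently $(D_r)$, bound, and likewise for the relaxed half-line conditions $(D_{1,+})$ and $(D'_{r,+})$), the kernel satisfies $(D_{1,+})$ and $(D'_{r,+})$ locally for every $r\in[1,\infty)$.

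Next I would invoke the hypothesis that \eqref{eq.first.order.nonautonomous} has $L^p$-maximal regularity for some $p\in(1,\infty)$: by definition this means that for $f\in L^p_{loc}(\R_+;X)$ the mild solution $u(t)=\int_0^t U(t,s)f(s)\,\ud s$ satisfies $\dot u, A(\cdot)u\in L^p_{loc}(\R_+;X)$, so the operator
\[
T:L^p_{loc}(\R_+;X)\to L^p_{loc}(\R_+;X),\qquad f\mapsto Tf:=A(\cdot)u,
\]
is well-defined, and it is continuous by the closed graph theorem (applied on each finite interval, then on the Fr\'echet space $L^p_{loc}$). That $T$ acts as integration against $K$ for compactly supported bounded $f$ and $t\notin\supp f$ is exactly the representation $A(t)u(t)=\int_0^t A(t)U(t,s)f(s)\,\ud s$, valid off the support of $f$ by the smoothing of the evolution family; hence $T$ is a singular integral operator on $L^p_{loc}(\R_+;X)$ associated with $K$ in the sense of Corollary~\ref{cor.exsio}. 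Applying that corollary, for every rearrangement invariant Banach function space $\E$ with Boyd indices $p_\E,q_\E\in(1,\infty)$ and every weight $w\in A^-_{p_\E}(\R_+)$ the operator $T$ extrapolates to a continuous linear operator from $\E_{w,loc}(\R_+;X)$ into $\E_{w,loc}(\R_+;X)$.

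Finally I would translate this back into the language of maximal regularity: given $f\in\E_{w,loc}(\R_+;X)$, set $u(t):=\int_0^t U(t,s)f(s)\,\ud s$. Since $\E_{w,loc}(\R_+;X)\subseteq L^1_{loc}(\R_+;X)$ (this uses Lemma~\ref{ing1}(ii) with $r=1$, which applies because $1<p_\E/q_w$ for $w\in A^-_{p_\E}(\R_+)$), $u$ is well-defined, lies in $W^{1,1}_{loc}(\R_+;X)$, satisfies $u(0)=0$, and solves \eqref{eq.first.order.nonautonomous}; by the extrapolation just proved, $A(\cdot)u=Tf\in\E_{w,loc}(\R_+;X)$, and then $\dot u = f - A(\cdot)u\in\E_{w,loc}(\R_+;X)$ as well. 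Uniqueness follows from the uniqueness in the $L^p$-maximal regularity assumption together with a density/localisation argument (any two solutions differ by a solution of the homogeneous problem with zero data, which vanishes). This yields $\E_w$-maximal regularity. The main obstacle is not any deep new estimate but the careful bookkeeping of the kernel regularity: one must check that the off-diagonal bounds quoted from \cite{ChFi14} and the literature genuinely give the \emph{local} half-line conditions $(D_{1,+})$ and $(D'_{r,+})$ for all finite $r$ — this is where the parabolic H\"older estimates in both the $t$- and $s$-variables enter, and one should make sure the Kato--Tanabe and Acquistapace--Terreni frameworks each supply the two-sided (kernel and adjoint kernel) regularity that Corollary~\ref{cor.exsio} demands.
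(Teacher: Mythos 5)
Your proposal matches the paper's proof: the paper likewise cites \cite[Section 7]{ChFi14} for the fact that the kernel $K(t,s)=A(t)U(t,s)$ is a standard kernel under either set of hypotheses, and then invokes Corollary~\ref{cor.exsio}. The paper simply states that the result follows ``immediately'' from these observations, so your more detailed account of how the parabolic smoothing estimates give the local $(D_{1,+})$ and $(D'_{r,+})$ conditions, of the closed-graph argument for continuity of $T$, and of the back-translation into $\E_w$-maximal regularity (including the use of Lemma~\ref{ing1}(ii) to get $\E_{w,loc}\subseteq L^1_{loc}$) is precisely the routine bookkeeping the paper leaves implicit.
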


By Hieber \& Monniaux \cite[Theorem 3.1, Theorem 3.2]{HiMo00}, the Acquistapace-Terreni conditions always imply $L^p$-maximal regularity for every $p\in (1,\infty )$ when the underlying space is a Hilbert space. In general UMD-spaces, the Acquistapace-Terreni conditions imply $L^p$-maximal regularity, if the sectoriality condition is replaced by the stronger {\em $R$-sectoriality}; we refer to \v{S}trkalj \cite[Satz 4.2.6]{St00diss} for this result, and to \cite{KuWe04} and the references therein for the concept of $R$-boundedness.  

An example of a nonautonomous parabolic equation which can be rewritten as a nonautonomous Cauchy problem of the form \eqref{eq.first.order.nonautonomous} in a Hilbert space and with a family $(A(t))$ of operators satisfying Acquistapace-Terreni conditions has been described by Yagi \cite[Theorem 4.1]{Ya90}. That example fits into a more general framework where the operators $A(t)$ come from sesquilinear forms having constant form domain and satisfying a H\"older continuity condition; see Ouhabaz \& Spina \cite[Theorem 3.3]{OuSp10}. Combining our Theorem \ref{thm.firstorder.nonautonomous} with \cite[Theorem 3.3]{OuSp10}, we obtain the following result.

\begin{corollary}
Let $H$ and $V$ be Hilbert spaces such that $V$ is densely and continuously embedded into $H$. Let $(a(t))_{t\geq 0}$ be a family of sesquilinear forms on $V$. Assume that 
\begin{itemize}
 \item[(a)] $|a(t,u,v)| \leq M \, \| u\|_V \, \| v\|_V$ for some $M\geq 0$ and all $t\geq 0$, $u$, $v\in V$, 
 \item[(b)] $|a(t,u,u)| + \omega \, \| u\|_H^2 \geq \eta \, \| u\|_V^2$ for some $\omega$, $\eta >0$ and all $t\geq 0$, $u\in V$, 
 \item[(c)] $|a(t,u,v) - a(s,u,v)|\leq K\, |t-s|^\beta \, \| u\|_V \, \| v\|_V$ for some $K\geq 0$, $\beta >\frac12$ and all $t$, $s\geq 0$, $u$, $v\in V$.
\end{itemize}
Let $A(t)$ be the operator on $H$ associated with the form $a(t)$, that is, 
\begin{align*}
 D(A(t)) & := \{ u\in V : \exists f\in H \, \forall v\in V : a(t,u,v) = \langle f,v\rangle_H \} , \\
 A(t)u & := f .
\end{align*}
Then the nonautonomous Cauchy problem \eqref{eq.first.order.nonautonomous} on $H$ has $\E_w$-maximal regularity for every rearrangement invariant Banach function space $\E$ with Boyd indices $p_\E$, $q_\E\in (1,\infty )$ and every weight $w\in A_p^- (\R_+ )$.
\end{corollary}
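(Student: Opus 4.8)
The plan is to verify the three hypotheses of Theorem~\ref{thm.firstorder.nonautonomous} for the family $(A(t))_{t\geq 0}$ coming from the forms $a(t)$, and then to invoke that theorem directly. Concretely, I would check first that each $A(t)$ is a closed, densely defined operator on $H$ which is sectorial: this is the classical Kato representation, and the ellipticity estimate (b) together with the boundedness (a) gives that $-A(t) + \omega$ is sectorial of some fixed angle $< \pi/2$, uniformly in $t$, with domains all sitting in $V$. Second, I would verify that $(A(t))_{t\geq 0}$ together with its adjoint family $(A(t)^*)_{t\geq 0}$ satisfies the Acquistapace--Terreni conditions. The key point here is that the H\"older continuity of the forms (c), combined with the constant form domain $V$, yields precisely the Acquistapace--Terreni resolvent difference estimate
\[
 \| A(t)(\lambda + A(t))^{-1}(A(t)^{-1} - A(s)^{-1})\|_{\cL(H)} \leq C \, |t-s|^\beta \, |\lambda|^{-\mu}
\]
for suitable exponents, with the same estimate for the adjoint forms $a(t)^*(u,v) := \overline{a(t,v,u)}$ (which inherit (a), (b), (c) with the same constants). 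This is exactly the content of \cite[Theorem~3.3]{OuSp10}, so I would simply cite it: that theorem asserts that under (a), (b), (c) the family $(A(t))$ generates an evolution family and the nonautonomous problem \eqref{eq.first.order.nonautonomous} has $L^p$-maximal regularity on the Hilbert space $H$ (using $\beta > \tfrac12$).

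With those two facts in hand, the third ingredient --- $L^p$-maximal regularity for some $p\in(1,\infty)$ --- is also supplied by \cite[Theorem~3.3]{OuSp10} (or alternatively by Hieber \& Monniaux \cite{HiMo00}, since $H$ is a Hilbert space and the Acquistapace--Terreni conditions hold), so all hypotheses of Theorem~\ref{thm.firstorder.nonautonomous} are met. Applying that theorem then gives $\E_w$-maximal regularity for \eqref{eq.first.order.nonautonomous} on $H$ for every rearrangement invariant Banach function space $\E$ with Boyd indices $p_\E$, $q_\E \in (1,\infty)$ and every weight $w\in A_p^-(\R_+)$, which is the claim.

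The main obstacle, and the only place where genuine work is hidden, is the passage from the form-level H\"older continuity (c) to the operator-level Acquistapace--Terreni condition for both $(A(t))$ and $(A(t)^*)$. One must be careful that the Acquistapace--Terreni framework requires the resolvent estimates for the operators themselves, and translating (c) into such estimates uses the sesquilinear form identity $a(t,u,v) - a(s,u,v) = \langle (A(t)-A(s))u, v\rangle$ only formally, since $A(t)-A(s)$ need not be an operator on $H$; the correct argument factors through the part of $A(t)$ in the extrapolation space $V'$ and exploits that $V = [H, D(A(t))]_{1/2}$ with uniform constants. Fortunately this is precisely the technical core of \cite[Theorem~3.3]{OuSp10}, so in our proof it can be black-boxed: the only thing I would spell out is that the adjoint forms $a(t)^*$ satisfy the same three hypotheses, so that \cite[Theorem~3.3]{OuSp10} applies simultaneously to $(A(t))$ and $(A(t)^*)$, which is exactly what Theorem~\ref{thm.firstorder.nonautonomous} requires.
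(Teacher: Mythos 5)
Your proposal is correct and follows exactly the paper's route: combine Theorem~\ref{thm.firstorder.nonautonomous} with \cite[Theorem~3.3]{OuSp10}, which supplies both the Acquistapace--Terreni conditions (for $(A(t))$ and, by symmetry of hypotheses (a)--(c), for the adjoint family) and the $L^p$-maximal regularity needed to trigger the extrapolation. Your added remark that the adjoint forms $a(t)^*(u,v)=\overline{a(t,v,u)}$ inherit (a)--(c) verbatim is exactly the point the paper leaves implicit when requiring the Acquistapace--Terreni conditions for both families.
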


\subsection{Second order problems}

Let $A$ and $B$ be two closed linear operators on a Banach space $X$. Similarly as before, we say that the second order Cauchy problem
\begin{equation} \label{second order}
\ddot{u} + B\dot{u} + A u = f \text{ on } \R_+ , \quad u(0) =
\dot{u}(0) = 0.
\end{equation}
has $\E_w$-maximal regularity if for every $f\in \E_{w,loc} (\R_+ ;X)$ it admits a unique strong solution $u\in W^{2,1}_{loc} (\R_+ ;X)$ such that $u$, $\dot u$, $\ddot u$, $Au$, $B\dot u\in \E_{w,loc} (\R_+ ;X)$. It has been shown in  Chill \& Srivastava \cite[Proposition 2.2]{ChSr05} that if the second order problem has $L^p$-maximal regularity, then there exists a so-called {\em sine family} $S\in C(\R_+ ;\cL (X)) \cap C^\infty ((0,\infty );\cL (X,D_A \cap D_B))$ such that the unique strong solution of \eqref{second order} has the form $u=S*f$. As a consequence, the operators $f\mapsto Au$, $f\mapsto B\dot u$ and $f\mapsto \ddot u$ are singular integral operators on $L^p_{loc} (\R_+ ;X)$. In \cite[Theorem 4.2]{ChSr08}, this fact was exploited in order to show that $L^p$-maximal regularity is independent of $p\in (1,\infty )$ by showing that the convolution kernels $AS(\cdot )$, $B\dot S(\cdot )$ and $\ddot S(\cdot )$ satisfy the H\"ormander conditions. Later, in \cite[Theorem 6]{ChKr14}, the present 
authors improved this result by showing that the relevant kernels satisfy the standard conditions locally, and thus obtained an extrapolation result into weighted rearrangement invariant Banach function spaces using Muckenhoupt weights \cite[Theorem 1]{ChKr14}. The following extends this latter result.  

\begin{theorem} \label{thm.mrextra.second}
 Assume that the second order Cauchy problem \eqref{second order} has $L^p$-maximal regularity for some $p\in (1,\infty )$. Then it has $\E_w$-maximal regularity for every rearrangement-invariant Banach function space $\E$ with Boyd indices $p_\E$, $q_\E\in (1,\infty )$ and every weight $w\in A_p^- (\R_+ )$. 
\end{theorem}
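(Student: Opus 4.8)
The plan is to reduce Theorem \ref{thm.mrextra.second} to the abstract extrapolation result in Corollary \ref{cor.exsio}, exactly in the way that Theorem \ref{thm.first.order} is obtained. The starting point is the structural description of solutions of \eqref{second order}: assuming $L^p$-maximal regularity for some $p\in(1,\infty)$, by Chill \& Srivastava \cite[Proposition 2.2]{ChSr05} there exists a sine family $S\in C(\R_+;\cL(X))\cap C^\infty((0,\infty);\cL(X,D_A\cap D_B))$ such that the unique strong solution of \eqref{second order} is $u=S*f$. Consequently the three maximal regularity operators $f\mapsto Au=AS*f$, $f\mapsto B\dot u=B\dot S*f$ and $f\mapsto \ddot u=\ddot S*f$ are continuous (by the closed graph theorem) linear operators on $L^p_{loc}(\R_+;X)$, each of convolution type with translation-invariant kernel supported on $\{(t,s):t>s>0\}$, namely $K_1(t,s)=AS(t-s)$, $K_2(t,s)=B\dot S(t-s)$ and $K_3(t,s)=\ddot S(t-s)$ for $t>s>0$ (and $0$ otherwise).

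First I would recall that in \cite[Theorem 6]{ChKr14} it was shown that, under the $L^p$-maximal regularity hypothesis, each of these convolution kernels satisfies the standard (Calder\'on--Zygmund) conditions \emph{locally}; that is, for every $\tau>0$ the truncated kernel $K_i\chi_{\{s<t<\tau\}}$ satisfies $(D^*_\infty)$ and $(D^{*\prime}_\infty)$. Since the standard conditions locally imply the conditions $(D_{1,+})$ and $(D'_{r,+})$ locally for every $r\in[1,\infty)$ (the $(D_\infty)$-type bounds dominate the $(D_r)$-type bounds, as noted in the excerpt after the definition of $(H_\infty)$, and the support condition $t>s$ converts $(D^*_\infty)$, $(D^{*\prime}_\infty)$ into $(D_{1,+})$, $(D'_{r,+})$), each operator $T_i$ satisfies the hypotheses of Corollary \ref{cor.exsio}. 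Applying Corollary \ref{cor.exsio} three times, once for each $i$, we conclude that for every rearrangement invariant Banach function space $\E$ with Boyd indices $p_\E,q_\E\in(1,\infty)$ and every weight $w\in A^-_{p_\E}(\R_+)$, each $T_i$ extrapolates to a continuous linear operator from $\E_{w,loc}(\R_+;X)$ into itself.

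The remaining step is to translate this boundedness of the three operators into the statement of $\E_w$-maximal regularity. Given $f\in\E_{w,loc}(\R_+;X)$, we want to exhibit a unique $u\in W^{2,1}_{loc}(\R_+;X)$ with $u,\dot u,\ddot u,Au,B\dot u\in\E_{w,loc}(\R_+;X)$ solving \eqref{second order}. The natural candidate is $u=S*f$; since by Lemma \ref{ing1} one has $\E_{w,loc}(\R_+;X)\subseteq L^r_{loc}(\R_+;X)$ for some $r>1$, $f$ is in particular locally integrable, so $u=S*f$ is well-defined and, by the $L^p$-theory applied on each interval $[0,\tau]$ together with a density/approximation argument, is the strong solution; the extrapolation of the three operators then gives $Au,B\dot u,\ddot u\in\E_{w,loc}(\R_+;X)$, and $u,\dot u\in\E_{w,loc}(\R_+;X)$ follows either directly (e.g. $\dot u=\dot S*f$ with $\dot S$ bounded) or from the identity $\ddot u=f-B\dot u-Au$ combined with integration, as in \cite{ChKr14}. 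Uniqueness is inherited from the $L^p$ case on each finite interval. The main obstacle, as in the first-order situation, is essentially bookkeeping rather than a genuinely new difficulty: one must verify carefully that the truncated kernels land in the hypothesis class of Corollary \ref{cor.exsio} (i.e.\ that ``standard conditions locally'' from \cite{ChKr14} indeed yield $(D_{1,+})$ and $(D'_{r,+})$ locally for all $r$), and that the local-in-time nature of $\E_{w,loc}$ is compatible with the convolution structure and with the approximation argument used to identify $u=S*f$ as the maximal-regularity solution. Given that \cite[Theorem 6]{ChKr14} already does the kernel estimates, this reduction is routine.
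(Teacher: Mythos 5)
Your proposal follows essentially the same route as the paper: the paper derives Theorem~\ref{thm.mrextra.second} from the sine-family representation $u=S*f$ of Chill \& Srivastava, the fact (from \cite[Theorem 6]{ChKr14}) that the three convolution kernels $AS$, $B\dot S$, $\ddot S$ satisfy the standard conditions locally, and then an application of Corollary~\ref{cor.exsio}, which is exactly your reduction. Your additional observations on the local implication chain $(D^*_\infty)\Rightarrow (D_\infty)\Rightarrow (D_r)\Rightarrow (D_{r,+})$, and on using Lemma~\ref{ing1} to guarantee $f\in L^1_{loc}$ so that $u=S*f$ is well-defined and coincides with the $L^p$-solution on each finite interval, correctly supply the bookkeeping the paper leaves implicit.
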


An example of a second order Cauchy problems having $L^p$-maximal regularity can be found, for example, in Chill \& Srivastava \cite[Section 4]{ChSr05}. In that example, the underlying Banach space is a UMD-space with property $(\alpha )$, the operator $A$ admits an $RH^\infty$-functional calculus on a sector, and $B = \alpha A^\varepsilon$ for an appropriate choice of $\varepsilon\in [\frac12 ,1]$ and $\alpha >0$; see \cite{ChSr05} for the precise assumptions.  

\subsection{Volterra equations and fractional order problems}

Let $A$ be a closed linear operator on a Banach space $X$, and let $a\in L^1_{loc} (\R_+ )$. We consider the abstract Volterra equation
\begin{equation} \label{eq.Volterra}
u + A a*u = f \text{ on } \R_+ , \quad u(0) = 0.
\end{equation}
This Volterra equation is well-posed in the sense of \cite[Definition 1.2, p. 31]{Pr93} if and only if there exists a {\em resolvent family} $(S(t))_{t\in\R_+}$ which is, by definition, a strongly continuous family of bounded, linear operators on $X$ such that $S(0)=I$, $S(t)A\subseteq AS(t)$ for every $t\geq 0$ and
\[
 S(t)x = x + \int_0^t a(t-s) AS(s)x\; \ud s \text{ for every } t\geq 0, \, x\in D(A) ; 
\]
compare with \cite[Proposition 1.1, p. 32]{Pr93}. If \eqref{eq.Volterra} is well-posed, then the solution $u$ of \eqref{eq.Volterra} for continuous $f$ is given by
\[
 u(t) = \frac{d}{dt} \int_0^t S(t-s)f(s) \; \ud s ,
\]
or, if $f\in W^{1,1}_{loc} (\R_+ ;X)$, 
\[
 u(t) = S(t)f(0) + \int_0^t S(t-s)\dot f(s)\; \ud s ;
\]
\cite[Proposition 1.2, p. 33]{Pr93}. Assume now that \eqref{eq.Volterra} {\em is} well-posed. Given a rearrangement invariant Banach function space $\E$ and a weight $w$ on $\R_+$, we say that the Volterra equation \eqref{eq.Volterra} has $\E_w$-maximal regularity if for every right-hand side $f$ of the form $a*g$ with $g\in \E_{w,loc} (\R_+ ;X)$ the unique solution $u$ is of the form $u=a*v$ with $v\in\E_{w,loc} (\R_+ ;X)$. 

Assume that the kernel $a$ is of subexponential growth, that is, the Laplace integral 
\[
 \hat{a} (\lambda ) := \int_0^\infty e^{-\lambda t} a(t) \, \ud t 
\]
converges absolutely for every $\lambda \in\C$ with ${\rm Re}\, \lambda >0$. In other words, the abscissa of absolute convergence of the Laplace transform $\hat{a}$ is less or equal to $0$. Following \cite[Definition 3.1, p. 68]{Pr93} we call the Volterra equation \eqref{eq.Volterra} {\em parabolic} if $\hat{a} (\lambda )\not= 0$ and $I + \hat{a} (\lambda ) A$ is invertible whenever ${\rm Re}\, \lambda >0$, and if there exists a constant $C\geq 0$ such that 
\[
 |(I+\hat{a}(\lambda )A)^{-1}|_{\cL (X)} \leq C \text{ for every } \lambda\in\C , \, {\rm Re}\, \lambda >0 .
\]
And following \cite[Definition 3.3, p. 69]{Pr93} the kernel $a$ is called {\em $k$-regular} if there is a constant $C\geq 0$ such that
\[
 |\lambda^n \hat{a}^{(n)} (\lambda ) | \leq C\, |\hat{a}(\lambda )| \text{ for every } \lambda\in\C , \, {\rm Re}\, \lambda >0 , \, 0\leq n\leq k .
\]

\begin{theorem} \label{thm.volterra}
Assume that the kernel $a$ is $2$-regular. Assume further that the Volterra equation \eqref{eq.Volterra} is parabolic and that it has $L^p$-maximal regularity for some $p\in (1,\infty )$. Then it has $\E_w$-maximal regularity for every rearrangement invariant Banach function space $\E$ with Boyd indices $p_\E$, $q_\E\in (1,\infty )$ and every weight $w\in A_{p_\E}^- (\R_+ )$.  
\end{theorem}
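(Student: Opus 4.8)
The strategy is the same as for the first and second order Cauchy problems treated above: I would exhibit the maximal regularity operator of \eqref{eq.Volterra} as a singular integral operator on $L^p_{loc}(\R_+;X)$ whose translation-invariant kernel is a standard kernel locally, and then invoke Corollary \ref{cor.exsio}. To set this up, write the right-hand side as $f=a*g$ with $g\in L^p(\R_+;X)$ and the corresponding solution as $u=a*v$; since convolution with $a$ is injective, \eqref{eq.Volterra} is equivalent to $v+Aa*v=g$, so that $v=g-Aa*v$, and \eqref{eq.Volterra} has $\E_w$-maximal regularity precisely when the operator $T:g\mapsto Aa*v$ maps $\E_{w,loc}(\R_+;X)$ into itself. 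By $L^p$-maximal regularity and the closed graph theorem, $T$ is a bounded linear operator on $L^p_{loc}(\R_+;X)$; being translation-invariant, it is a convolution operator with transfer function
\[
 H(\lambda)=\hat a(\lambda)A\,(I+\hat a(\lambda)A)^{-1}=I-(I+\hat a(\lambda)A)^{-1},\qquad \Re\lambda>0 .
\]

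The heart of the matter is to obtain kernel estimates for $T$. Parabolicity gives $\|(I+\hat a(\lambda)A)^{-1}\|_{\cL(X)}\le C$ on the open right half-plane, hence $\|H(\lambda)\|_{\cL(X)}\le 1+C$ there; feeding the bounds $|\lambda^{n}\hat a^{(n)}(\lambda)|\le C|\hat a(\lambda)|$ ($n\le 2$) from the $2$-regularity of $a$ into the differentiated resolvent identity $\frac{d}{d\lambda}(I+\hat a A)^{-1}=-(I+\hat a A)^{-1}(\hat a'A)(I+\hat a A)^{-1}$ and its second-order analogue, one also gets $\|\lambda^{n}H^{(n)}(\lambda)\|_{\cL(X)}\le C$ on the right half-plane for $n=1,2$. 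By inversion of the Laplace transform together with the standard operator-valued multiplier estimates for such transfer functions — this is essentially contained in \cite[Chapter I]{Pr93}, and the analogous computation was carried out for second order problems in \cite{ChSr08} and \cite[Theorem 6]{ChKr14} — the resolvent family of \eqref{eq.Volterra} is then of class $C^\infty$ on $(0,\infty)$, the operator $T$ equals, modulo a bounded zero-order term, convolution with a kernel $k\in C^\infty((0,\infty);\cL(X))$, and $\|k(t)\|_{\cL(X)}\le C/t$, $\|k'(t)\|_{\cL(X)}\le C/t^2$ for $t$ in bounded subintervals of $(0,\infty)$. Consequently the kernel $K(t,s):=k(t-s)\chi_{\{t>s>0\}}$ of $T$ satisfies the second standard condition locally, and in particular the conditions $(D_{1,+})$ and $(D_{r,+}')$ locally for every $r\in[1,\infty)$.

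It then only remains to apply Corollary \ref{cor.exsio} to $T$: for every rearrangement invariant Banach function space $\E$ with Boyd indices $p_\E,q_\E\in(1,\infty)$ and every weight $w\in A^-_{p_\E}(\R_+)$, the operator $T$ extrapolates to a continuous linear operator on $\E_{w,loc}(\R_+;X)$. Reading off $v=g-Tg\in\E_{w,loc}(\R_+;X)$ and $u=a*v$, this is exactly the asserted $\E_w$-maximal regularity of \eqref{eq.Volterra}.

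The main obstacle is the second step, namely the pointwise kernel bounds — above all $\|k'(t)\|_{\cL(X)}\le C/t^2$ — derived from parabolicity together with the $2$-regularity of $a$. This is the genuinely technical point; it reproduces the Laplace transform and multiplier estimates of Pr\"uss's monograph, the hypothesis ``$2$-regular'' being exactly what makes the bound on $k'$, hence the condition $(D_{r,+}')$, available. Everything else is bookkeeping together with the appeal to Corollary \ref{cor.exsio}, just as in the proofs of Theorems \ref{thm.first.order} and \ref{thm.mrextra.second}.
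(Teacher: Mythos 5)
Your proposal is correct and follows the same route the paper takes: reduce to a translation-invariant singular integral operator built from the resolvent family, show that parabolicity and $2$-regularity of $a$ force the kernel to satisfy the Dini conditions locally, and apply Corollary~\ref{cor.exsio}. The paper differs from you only in that it does not rederive the kernel estimates from the transfer function $H(\lambda)=I-(I+\hat a(\lambda)A)^{-1}$; it works with the operator $g\mapsto\frac{d}{dt}(S*g)$ and its kernel $\dot S(t-s)$, and cites Pr\"uss \cite[Theorem 3.1, p.73]{Pr93} directly for the fact that $2$-regularity yields $S\in C^1((0,\infty);\cL(X))$ and the locally standard estimates on $\dot S$. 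One small caution: the specific pointwise bound $\|k'(t)\|_{\cL(X)}\le C/t^2$ you invoke would ordinarily come from $3$-regularity, not $2$-regularity; with $2$-regularity one only gets $\dot S$ with $\|t\dot S(t)\|$ bounded together with a H\"older-type modulus of continuity for $\dot S$ (this is what Pr\"uss's theorem actually supplies), and that weaker information already implies the conditions $(D_{1,+})$ and $(D'_{r,+})$ locally, so the conclusion is unaffected.
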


Conditions implying $L^p$-maximal regularity for the Volterra equation can be found, for example, in Pr\"u{\ss} \cite[Theorem 8.7, p.227]{Pr93} or Zacher \cite[Theorem 3.1]{Za05}.

\begin{proof}
Assume that the Volterra equation \eqref{eq.Volterra} has $L^p$-maximal regularity for some $p\in (1,\infty )$. By the representation above, if $f=a*g$ for some continuous $g$, then the unique solution is of the form 
\[
u = \frac{d}{dt} (S*a*g) . 
\]
Since $\frac{d}{dt} (S*g)$ is the unique solution of the problem \eqref{eq.Volterra} with the right-hand side $f$ replaced by $g$, and since in particular $S*g$ is continuously differentiable, we have
\[
 u = a * [ \frac{d}{dt} (S * g) ] .
\]
Since the convolution by $a$ is an injective operator (this follows, in particular, from the assumption of parabolicity), the assumption of $L^p$-maximal regularity implies that the operator 
\begin{align*}
 T : C (\R_+ ;X) & \to C(\R_+ ;X) , \\
      g & \mapsto \frac{d}{dt} (S*g) 
\end{align*}
extends to a continuous, linear operator on $L^p (\R_+ ;X)$. Since the Volterra equation \eqref{eq.Volterra} is parabolic and since $a$ is $2$-regular, and by \cite[Theorem 3.1, p. 73]{Pr93}, the resolvent family $S$ is continuously differentiable from $(0,\infty )$ with values in $\cL (X)$. Hence, if $g\in L^\infty_c (\R_+ ;X)$ and $t\not\in\supp g$, then $Tg (t) = \int_0^t \dot S(t-s) g(s) \; ds$. In other words, $T$ is a singular integral operator of convolution type on $L^p_{loc} (\R_+ ;X)$ associated with the kernel
\[
 K(t,s) = \begin{cases} 
           \dot S (t-s) & \text{if } t>s , \\[2mm]
           0 & \text{else.}
          \end{cases}
\]
This kernel is obviously supported in $\{ (t,s)\in \R_+\times\R_+ : t>s\}$. Moreover, by \cite[Theorem 3.1, p.73]{Pr93} again, the kernel satisfies the standard conditions locally. The claim now follows from Corollary \ref{cor.exsio}.
\end{proof}

\begin{remark} \label{rem.parabolic}
For every $\theta\in (0,\pi )$ we define the sector 
\[
 \Sigma_\theta := \{ \lambda\in\C : |{\rm arg}\, \lambda| \leq \theta \}  
\]
Let $a\in L^1_{loc} (\R_+ )$ be of subexponential growth. We say that $a$ is {\em $\theta$-sectorial} if 
\[
 \hat{a} (\lambda ) \in \Sigma_\theta \text{ for all } \lambda\in\C , \, {\rm Re}\,\lambda >0 ,
\]
and we define its sectoriality angle
\[
 \theta_a := \inf \{ \theta\in (0,\pi ) : a \text{ is } \theta\text{-sectorial} \}.
\]
Moreover, we call a closed, linear operator $A$ on a Banach space {\em $\theta$-sectorial} if 
\begin{align*} 
 & \sigma (A) \subseteq \Sigma_\theta \cup \{ 0\} , \text{ and} \\
 & \sup_{\lambda\not\in\Sigma_{\theta'}} |\lambda (\lambda -A)^{-1} |_{\cL (X)} <\infty \text{ for every } \theta'\in (\theta ,\pi ) ,
\end{align*}
and we also define its sectoriality angle
\[
 \theta_A := \inf \{ \theta\in (0,\pi ) : A \text{ is } \theta\text{-sectorial} \}.
\]
We simply say that the kernel $a$ and the operator $A$ are {\em sectorial} if $\theta_a$, $\theta_A\in (0,\pi )$, that is, if they are $\theta$-sectorial for some $\theta\in (0,\pi )$.  Then it follows easily from the above definitions that the Volterra equation \eqref{eq.Volterra} is parabolic if $a$ and $A$ are sectorial and
\[
 \theta_a + \theta_A <\pi ;
\]
see also \cite[Proposition 3.1, p. 69]{Pr93}.
\end{remark}

A special case of the Volterra equation \eqref{eq.Volterra} is the fractional order Cauchy problem
\begin{equation} \label{eq.fractional}
 \frac{d^\alpha}{dt^\alpha} u + A u = f \text{ on } \R_+ , \quad u(0) = 0 ,
\end{equation}
for some $\alpha\in (0,2)$ and a closed, linear operator $A$ on some Banach space $X$. The fractional derivative is here defined by
\[
 \frac{d^\alpha}{dt^\alpha} u := \begin{cases}
                                 \frac{d}{dt} (k_{1-\alpha} * u)  & \text{if } \alpha\in (0,1) , \\[2mm]
                                 \frac{d^2}{dt^2} (k_{2-\alpha} * u)  & \text{if } \alpha\in [1,2) ,
                                 \end{cases}
\]
with 
\[
 k_\beta (t) := \frac{1}{\Gamma (\beta )} t^{\beta-1} \qquad (\beta >0 , \, t>0).
\]
Note that $k_\beta * k_\gamma = k_{\beta +\gamma}$ for every $\beta$, $\gamma>0$, that is, $(k_\beta )_{\beta >0}$ forms a convolution semigroup. One easily checks that convolution by $k_n$ corresponds to $n$-times integration for every natural number $n$, and hence it is appropriate to say that convolution by $k_\beta$ corresponds to $\beta$-times (fractional) integration, thus explaining the fractional derivative $\frac{d^\alpha}{dt^\alpha}$. The fractional order Cauchy problem \eqref{eq.fractional} can be transformed into the Volterra equation as can be seen by taking $a:= k_\alpha$ and by convolving both sides of equation \eqref{eq.fractional} by $a$. We say that the fractional order Cauchy problem \eqref{eq.fractional} is well-posed if the resulting Volterra equation is well-posed, and we say that it has $\E_w$-maximal regularity if for every $f\in \E_{w,loc} (\R_+ ;X)$ the unique mild solution $u = \frac{d}{dt} (S*k_\alpha*f)$ of \eqref{eq.fractional} satisfies $\frac{d^\alpha}{dt^\alpha} u$, $Au\in \E_
{w,loc}
 (\R_+ ;X)$. Comparing the respective definitions of maximal regularity one easily sees that the fractional order Cauchy problem \eqref{eq.fractional} has $\E_w$-maximal regularity if and only if the associated Volterra equation \eqref{eq.Volterra} has $\E_w$-maximal regularity. From this observation one obtains the following corollary.

\begin{corollary}
 Assume that $\alpha\in (0,2)$ and that $A$ is sectorial with $\theta_A\in (0,\frac{(2-\alpha)\pi}{2})$. Assume further that the fractional order Cauchy problem \eqref{eq.fractional} has $L^p$-maximal regularity for some $p\in (1,\infty )$. Then it has $\E_w$-maximal regularity for every rearrangement invariant Banach function space $\E$ with Boyd indices $p_\E$, $q_\E\in (1,\infty )$ and every weight $w\in A_{p_\E}^- (\R_+ )$.
\end{corollary}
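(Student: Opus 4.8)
The plan is to deduce the corollary directly from Theorem \ref{thm.volterra}, applied to the Volterra equation \eqref{eq.Volterra} with convolution kernel $a := k_\alpha$. As already observed in the text immediately preceding the statement, the fractional order Cauchy problem \eqref{eq.fractional} has $\E_w$-maximal regularity if and only if the associated Volterra equation \eqref{eq.Volterra} with $a=k_\alpha$ has $\E_w$-maximal regularity, and the same equivalence holds at the level of $L^p$-maximal regularity. Hence the hypothesis that \eqref{eq.fractional} has $L^p$-maximal regularity for some $p\in (1,\infty )$ transfers to \eqref{eq.Volterra}, and it remains only to check the two remaining hypotheses of Theorem \ref{thm.volterra}: that $a=k_\alpha$ is $2$-regular, and that the Volterra equation \eqref{eq.Volterra} is parabolic, under the stated assumptions on $\alpha$ and $A$.

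First I would record the elementary fact that $k_\alpha$ is of subexponential growth with Laplace transform $\hat{k}_\alpha(\lambda) = \lambda^{-\alpha}$ for $\mathrm{Re}\,\lambda >0$; indeed, the Laplace integral of $t\mapsto t^{\alpha -1}/\Gamma(\alpha)$ converges absolutely there because $\alpha >0$. Differentiating, $\hat{k}_\alpha^{(n)}(\lambda) = (-1)^n\,\alpha(\alpha +1)\cdots(\alpha +n-1)\,\lambda^{-\alpha -n}$, so that $|\lambda^n\hat{k}_\alpha^{(n)}(\lambda)| = \alpha(\alpha +1)\cdots(\alpha +n-1)\,|\hat{k}_\alpha(\lambda)|$ for every $n\in\N$ and every $\lambda$ with $\mathrm{Re}\,\lambda >0$. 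Thus $k_\alpha$ is $k$-regular for every $k\in\N$, in particular $2$-regular in the sense of \cite[Definition 3.3, p.\ 69]{Pr93}.

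Next I would verify parabolicity through Remark \ref{rem.parabolic}. For $\mathrm{Re}\,\lambda >0$ one has $|\mathrm{arg}\,\lambda| < \pi/2$, so $\mathrm{arg}\,\hat{k}_\alpha(\lambda) = -\alpha\,\mathrm{arg}\,\lambda$ lies in $(-\alpha\pi/2,\alpha\pi/2)$ and $\hat{k}_\alpha(\lambda)\neq 0$; hence $k_\alpha$ is $\theta$-sectorial for every $\theta >\alpha\pi/2$, i.e. $\theta_{k_\alpha}\leq \alpha\pi/2 <\pi$, so $k_\alpha$ is sectorial. Since $A$ is sectorial with $\theta_A <(2-\alpha)\pi/2$, we get $\theta_{k_\alpha} + \theta_A < \alpha\pi/2 + (2-\alpha)\pi/2 = \pi$, and Remark \ref{rem.parabolic} yields that \eqref{eq.Volterra} with $a=k_\alpha$ is parabolic. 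Theorem \ref{thm.volterra} then applies and delivers $\E_w$-maximal regularity of \eqref{eq.Volterra}, hence of \eqref{eq.fractional}, for every rearrangement invariant Banach function space $\E$ with Boyd indices $p_\E$, $q_\E\in (1,\infty )$ and every weight $w\in A^-_{p_\E}(\R_+)$.

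Since every hypothesis reduces to an explicit computation with $\hat{k}_\alpha(\lambda)=\lambda^{-\alpha}$, there is no serious obstacle here; the only point requiring a moment's care is the bookkeeping of sectoriality angles so that $\theta_{k_\alpha} + \theta_A <\pi$, which is precisely where the restriction $\theta_A\in(0,(2-\alpha)\pi/2)$ is used.
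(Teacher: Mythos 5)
Your proof is correct and follows exactly the same route as the paper's: reduce to the Volterra equation with $a = k_\alpha$, compute $\hat{k}_\alpha(\lambda) = \lambda^{-\alpha}$ to verify $2$-regularity and the sectoriality angle $\theta_{k_\alpha} \le \alpha\pi/2$, apply Remark \ref{rem.parabolic} to deduce parabolicity from $\theta_{k_\alpha} + \theta_A < \pi$, and invoke Theorem \ref{thm.volterra}. You fill in more of the elementary Laplace-transform bookkeeping than the paper, which simply asserts these facts, but there is no substantive difference.
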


\begin{proof}
Since $\hat{k}_\alpha (\lambda ) = \lambda^{-\alpha}$, one easily sees that $k_\alpha$ is sectorial with $\theta_{k_\alpha} = \frac{\alpha\pi}{2}$. Hence, by Remark \ref{rem.parabolic}, the associated Volterra equation \eqref{eq.Volterra} (with $a = k_\alpha$) is parabolic. Moreover, $k_\alpha$ is $2$-regular. The statement thus follows from Theorem \ref{thm.volterra}. 
\end{proof}

\begin{remark}
 Note that for $\alpha = 1$ we recover Theorem \ref{thm.first.order}. 
\end{remark}

\nocite{AuMoPo12}
%\nocite{Be74I,Be74II}
\nocite{BeLo76}
%\nocite{AcTe87}
%\nocite{Am95}
%\nocite{ABHN01}
%\nocite{ACFP07}
%\nocite{BaChSr08}
%\nocite{Bo67}
%\nocite{Bu93}
%\nocite{Ca66}
%\nocite{CaVe86}
%\nocite{ChFi14}
%\nocite{ChSr05}
%\nocite{ChSr08}
%\nocite{CoFe74}
%\nocite{CrMaPe06}
%\nocite{DPGr75}
%\nocite{DaLi87VIII}
%\nocite{De64}
%\nocite{DoVe87}
%\nocite{Du01}
%\nocite{GCRu85}
%\nocite{Hs06}
%\nocite{Hi99}
%\nocite{Jo83}
%\nocite{KaWe01}
%\nocite{LaTe87}
%\nocite{Mu74}
%\nocite{So64}
%\nocite{We01}
%\nocite{Ze90I}
%\nocite{Mu74a}

\providecommand{\bysame}{\leavevmode\hbox to3em{\hrulefill}\thinspace}

\bibliographystyle{amsplain}
%\begin{thebibliography}{10}
%\bibliography{ralph}

\begin{thebibliography}{10}


\bibitem{AcTe87}
P.~Acquistapace and B.~Terreni, \emph{A unified approach to abstract linear
  non-autonomous parabolic equations}, Rend. Sem. Mat. Univ. Padova \textbf{78}
  (1987), 47--107.

\bibitem{AlPe94}
J.~Alvarez and C.~P{\'e}rez, \emph{Estimates with {$A_\infty$} weights for
  various singular integral operators}, Boll. Un. Mat. Ital. A (7) \textbf{8}
  (1994), 123--133. 

\bibitem{Ar04}
W.~Arendt, \emph{Semigroups and evolution equations: functional calculus,
  regularity and kernel estimates}, Handbook of Differential Equations (C. M.
  Dafermos, E. Feireisl eds.), Elsevier/North Holland, 2004, pp.~1--85.

\bibitem{AuAx11a}
P. Auscher and A. Axelsson, \emph{Remarks on maximal regularity},
  Parabolic problems, Progr. Nonlinear Differential Equations Appl., vol.~80,
  Birkh\"auser/Springer Basel AG, Basel, 2011, pp.~45--55. 

\bibitem{AKMP12}
P. Auscher, C. Kriegler, S. Monniaux, and P. Portal,
  \emph{Singular integral operators on tent spaces}, J. Evol. Equ. \textbf{12}
  (2012), 741--765.

\bibitem{AuMoPo12}
P. Auscher, S. Monniaux, and P. Portal, \emph{The maximal
  regularity operator on tent spaces}, Commun. Pure Appl. Anal. \textbf{11}
  (2012), 2213--2219. 

\bibitem{BeCaPa62}
A.~Benedek, A.~P.~Calder\'on, and R.~Panzone, \emph{Convolution operators on
  {B}anach space valued functions}, Proc. Nat. Acad. Sci. USA \textbf{48}
  (1962), 356--365.

\bibitem{BeSh88}
C. Bennett and R. Sharpley, \emph{Interpolation of operators}, Pure and
  Applied Mathematics, vol. 129, Academic Press Inc., Boston, MA, 1988.

\bibitem{BeLo76}
J.~Bergh and J.~L\"ofstr\"om, \emph{Interpolation {S}paces. {A}n
  {I}ntroduction}, Grundlehren der mathematischen Wissenschaften, vol. 223,
  Springer-Verlag, Berlin, Heidelberg, New York, 1976.

\bibitem{Ca66}
A.-P. Calder{\'o}n, \emph{Spaces between {$L^{1}$} and {$L^{\infty }$} and the
 theorem of {M}arcinkiewicz}, Studia Math. \textbf{26} (1966), 273--299.


\bibitem{CaVe86}
P.~Cannarsa and V.~Vespri, \emph{On maximal {$L^p$} regularity for the abstract
  {C}auchy problem}, Boll. Un. Mat. Ital. B \textbf{5} (1986), 165--175.

\bibitem{ChFi14}
R.~Chill and A.~Fiorenza, \emph{Singular integral operators with
  operator-valued kernels, and extrapolation of maximal regularity into
  rearrangement invariant {B}anach function spaces}, J. Evol. Eq. (2014), to
  appear.

\bibitem{ChKr14}
R.~Chill and S.~Kr\'ol, \emph{Extrapolation of {$L^p$}-maximal
  regularity for second order {C}auchy problems}, Perspectives in operator
  theory, Banach Center Publ., Polish Acad. Sci., Warsaw, 2014, to appear.

\bibitem{ChSr05}
R.~Chill and S.~Srivastava, \emph{{$L^p$}-maximal regularity for second order
  {C}auchy problems}, Math. Z. \textbf{251} (2005), 751--781.

\bibitem{ChSr08}
R.~Chill and S.~Srivastava, \emph{{$L^p$} maximal regularity for second
  order {C}auchy problems is independent of {$p$}}, Boll. Unione Mat. Ital. (9)
  \textbf{1} (2008), 147--157. 

\bibitem{CoFe74}
R.~R. Coifman and C.~Fefferman, \emph{Weighted norm inequalities for maximal
  functions and singular integrals}, Studia Math. \textbf{51} (1974), 241--250.
  

\bibitem{CrMaPe11}
D.~V. Cruz-Uribe, J.~M. Martell, and C. P{\'e}rez,
  \emph{Weights, extrapolation and the theory of {R}ubio de {F}rancia},
  Operator Theory: Advances and Applications, vol. 215, Birkh\"auser/Springer
  Basel AG, Basel, 2011. 

\bibitem{CGMP06}
G.~P. Curbera, J. Garc{\'{\i}}a-Cuerva, J.~M.
  Martell, and C. P{\'e}rez, \emph{Extrapolation with weights,
  rearrangement-invariant function spaces, modular inequalities and
  applications to singular integrals}, Adv. Math. \textbf{203} (2006), 
  256--318. 

\bibitem{DaLi87VIII}
R.~Dautray and {J.-L.} Lions, \emph{Analyse math\'ematique et calcul
  num\'erique pour les sciences et les techniques. {V}ol. {VIII}}, INSTN:
  Collection Enseignement, Masson, Paris, 1987.

\bibitem{ToTo03}
A.~de~la Torre and J.~L. Torrea, \emph{One-sided discrete square function},
  Studia Math. \textbf{156} (2003), 243--260. 

\bibitem{De64}
L.~{De Simon}, \emph{Un applicazione della teoria degli integrali singolari
  allo studio delle equazioni differenziali lineari astratte del primo ordine},
  Rend. Sem. Mat. Univ. Padova \textbf{34} (1964), 547--558.

\bibitem{Gr08}
L. Grafakos, \emph{Classical {F}ourier analysis}, second ed., Graduate
  Texts in Mathematics, vol. 249, Springer, New York, 2008.

\bibitem{HaKu07}
B.~H. Haak and P.~C. Kunstmann, \emph{Weighted admissibility and
  wellposedness of linear systems in {B}anach spaces}, SIAM J. Control Optim.
  \textbf{45} (2007), 2094--2118 (electronic). 

\bibitem{Hi99}
M.~Hieber, \emph{Operator valued {F}ourier multipliers}, Topics in nonlinear
  analysis. The Herbert Amann anniversary volume (J. Escher, G. Simonett,
  eds.), Progress in Nonlinear Differential Equations and Their Applications,
  vol.~35, Birkh\"auser, Basel, 1999, pp.~363--380.

\bibitem{HiMo00}
M.~Hieber and S.~Monniaux, \emph{Pseudo-differential operators and maximal
  regularity results for non-autonomous parabolic equations}, Proc. Amer. Math.
  Soc. \textbf{128} (2000), 1047--1053.

\bibitem{KaTa62}
T.~Kato and H.~Tanabe, \emph{On the abstract evolution equations}, Osaka Math
  J. \textbf{14} (1962), 107--133.

\bibitem{KuWe04}
P.~C. Kunstmann and L.~Weis, \emph{Maximal {$L^p$} regularity for parabolic
  equations, {F}ourier multiplier theorems and {$H^\infty$} functional
  calculus}, Levico Lectures, Proceedings of the Autumn School on Evolution
  Equations and Semigroups (M. Iannelli, R. Nagel, S. Piazzera eds.), vol.~69,
  Springer, Berlin, 2004, pp.~65--320.

\bibitem{KuWh79}
D.~S. Kurtz and R.~L. Wheeden, \emph{Results on weighted norm
  inequalities for multipliers}, Trans. Amer. Math. Soc. \textbf{255} (1979),
  343--362. 

\bibitem{LiTz79}
J.~Lindenstrauss and L.~Tzafriri, \emph{Classical {B}anach {S}paces {I, II}},
  Springer, New York, 1979.

\bibitem{LMRT08}
M.~Lorente, J.~M. Martell, M.~S.~Riveros, and A.~de~la Torre, \emph{Generalized {H}\"ormander's conditions, commutators and weights}, J. Math. Anal. Appl. \textbf{342}
  (2008), 1399--1425.

\bibitem{LoRiTo05}
M.~Lorente, M.~S.~Riveros, and A.~de~la~Torre, \emph{Weighted estimates for
  singular integral operators satisfying {H}\"ormander's conditions of {Y}oung
  type}, J. Fourier Anal. Appl. \textbf{11} (2005), 497--509.

\bibitem{MaPeTG05}
J.~M. Martell, C.~P{\'e}rez, and R.~Trujillo-Gonz{\'a}lez, \emph{Lack of natural weighted estimates for some singular integral operators}, Trans. Amer. Math. Soc. \textbf{357} (2005),
 385--396 (electronic).

\bibitem{MR93}
F.~J.~Mart{\'{\i}}n-Reyes, \emph{New proofs of weighted inequalities for the
  one-sided {H}ardy-{L}ittlewood maximal functions}, Proc. Amer. Math. Soc.
  \textbf{117} (1993), 691--698. 

\bibitem{MRTo94}
F.~J.~Mart{\'{\i}}n-Reyes and A.~de~la Torre, \emph{One-sided {BMO} spaces}, J.
  London Math. Soc. (2) \textbf{49} (1994), 529--542. 

\bibitem{MRPiTo93}
F.~J.~Mart{\'{\i}}n-Reyes, L.~Pick, and A.~de~la Torre, \emph{{$A^+_\infty$}
  condition}, Canad. J. Math. \textbf{45} (1993), 1231--1244.

\bibitem{Mu72}
B.~Muckenhoupt, \emph{Weighted norm inequalities for the {H}ardy maximal
  function}, Trans. Amer. Math. Soc. \textbf{165} (1972), 207--226. 

\bibitem{OuSp10}
El~M.~Ouhabaz and C.~Spina, \emph{Maximal regularity for non-autonomous
  {S}chr\"odinger type equations}, J. Differential Equations \textbf{248}
  (2010), 1668--1683. 

\bibitem{Pa83}
A.~Pazy, \emph{Semigroups of {L}inear {O}perators and {A}pplications to
  {P}artial {D}ifferential {E}quations}, Applied Mathematical Sciences,
  vol.~44, Springer, Berlin, 1983.

\bibitem{Pr93}
J.~Pr\"uss, \emph{Evolutionary {I}ntegral {E}quations and {A}pplications},
  Monographs in Mathematics, vol.~87, Birkh\"auser, Basel, 1993.

\bibitem{PrSi04}
J.~Pr\"uss and G.~Simonett, \emph{Maximal regularity for evolution equations in
  weighted {$L\sb p$}-spaces}, Arch. Math. (Basel) \textbf{82} (2004),
  415--431.

\bibitem{Ru84}
J.~L. Rubio~de Francia, \emph{Factorization theory and {$A_{p}$}
  weights}, Amer. J. Math. \textbf{106} (1984), 533--547. 

\bibitem{RuRuTo86}
J.~L. Rubio~de Francia, F.~J. Ruiz, and J.~L. Torrea,
  \emph{Calder\'on-{Z}ygmund theory for operator-valued kernels}, Adv. in Math.
  \textbf{62} (1986),  7--48. 

\bibitem{Sw86}
E.~Sawyer, \emph{Weighted inequalities for the one-sided {H}ardy-{L}ittlewood
  maximal functions}, Trans. Amer. Math. Soc. \textbf{297} (1986), 
  53--61.

\bibitem{So64}
P.~E. Sobolevskii, \emph{Coerciveness inequalities for abstract parabolic
  equations}, Dokl. Akad. Nauk SSSR \textbf{157} (1964), 52--55.

\bibitem{Ta79}
H.~Tanabe, \emph{Equations of {E}volution}, Monographs and Studies in
  Mathematics, vol.~6, Pitman, London, San Francisco, Melbourne, 1979.

\bibitem{St00diss}
\v{Z}.~\v{S}trkalj, \emph{{$R$}-{B}eschr\"anktheit, {S}ummens\"atze
  abgeschlossener {O}peratoren und operatorwertige
  {P}seudodifferentialoperatoren}, Dissertation, Universit\"at Karlsruhe, 2000.

\bibitem{Ya90}
A.~Yagi, \emph{Parabolic equations in which the coefficients are generators of
  infinitely differentiable semigroups {II}}, Funkcial. Ekvac. \textbf{33}
  (1990), 139--150.

\bibitem{Za05}
R.~Zacher, \emph{Maximal regularity of type {$L_p$} for abstract parabolic
{V}olterra equations}, J. Evol. Equ. \textbf{5} (2005), 79--103.


\end{thebibliography}

\def\cprime{$'$} 
\def\ocirc#1{\ifmmode\setbox0=\hbox{$#1$}\dimen0=\ht0 \advance\dimen0
  by1pt\rlap{\hbox to\wd0{\hss\raise\dimen0
  \hbox{\hskip.2em$\scriptscriptstyle\circ$}\hss}}#1\else {\accent"17 #1}\fi}

\providecommand{\bysame}{\leavevmode\hbox to3em{\hrulefill}\thinspace}
\providecommand{\MR}{\relax\ifhmode\unskip\space\fi MR }
% \MRhref is called by the amsart/book/proc definition of \MR.
\providecommand{\MRhref}[2]{%
  \href{http://www.ams.org/mathscinet-getitem?mr=#1}{#2}
}
\providecommand{\href}[2]{#2}

\end{document}